\documentclass{amsart} 
\usepackage{amssymb,latexsym,textcomp}
\usepackage{bm}
\usepackage{xcolor,graphicx}

\usepackage{cite} 

\usepackage{hyperref}
\hypersetup{colorlinks,allcolors=blue}
\usepackage[OT2,T1]{fontenc}

\title[Isomorphisms of BJ orthogonality]{Isomorphisms of Birkhoff-James orthogonality on finite-dimensional $C^\ast$-algebra}

\author{Bojan Kuzma}
\thanks{This work is supported in part by the Slovenian Research Agency (research program P1-0285 and research projects N1-0210, N1-0296 and J1-50000) and by the Serbian Ministry of Education, Science and Technological
Development through Faculty of Mathematics, University of Belgrade.}
\address{University of Primorska, Glagolja\v{s}ka 8, SI-6000 Koper, Slovenia, and Institute of Mathematics, Physics, and Mechanics, Jadranska 19, SI-1000 Ljubljana, Slovenia}
\email{bojan.kuzma@upr.si}

\author{Srdjan Stefanovi\'c}
\address{University of Belgrade\\ Faculty of Mathematics\\ Student\/ski trg 16-18\\ 11000 Beograd\\ Serbia}
\email{srdjan.stefanovic@matf.bg.ac.rs}

\subjclass[2020]{Primary: 46L05, 47B49, Secondary: 46B20, 46B80}

\keywords{Birkhoff-James orthogonality; relative left-symmetric points; finite-dimensional $C^*$-algebra; Birkhoff-James isomorphism.}

\theoremstyle{plain}
\newtheorem{theorem}{Theorem}[section]
\newtheorem{lemma}[theorem]{Lemma}
\newtheorem{corollary}[theorem]{Corollary}
\newtheorem{proposition}[theorem]{Proposition}

\theoremstyle{definition}
\newtheorem{definition}{Definition}[section]
\newtheorem{example}{Example}[section]

\theoremstyle{remark}
\newtheorem{remark}{Remark}[section]

\DeclareMathOperator{\Ker}{Ker}

\newcommand{\A}{\mathcal A}
\newcommand{\B}{\mathcal B}

\newcommand{\C}{\mathbb C}

\def\CC{\mathbb C}

\def\PP{\mathbb P}
\def\RR{\mathbb R}
\def\Image{\mathop{\mathrm{Im}}}
\def\rank{\mathop{\mathrm{rk}}}
\def\Ker{\mathop{\mathrm{Ker}}}
\def\cV{\mathcal V}
\def\Span{\mathop{\mathrm{span}}}

\numberwithin{equation}{section} 

\begin{document}

\begin{abstract} We classify bijective maps which strongly preserve Birkhoff-James orthogonality on a finite-dimensional complex $C^*$-algebra. It is shown that those maps are close to being real-linear isometries whose structure is also determined.
\end{abstract}

\maketitle

\maketitle

\section{Introduction}
Two vectors in  an inner product  space are orthogonal if their inner product is zero. There are (infinitely) many equivalent ways to state this relation without explicitly using the inner product \cite[part (C)  p.~3]{AlonsoBenitez} and many of them  can be used to extend  orthogonality and define it on general normed spaces. The drawback is that those definitions (which all agree on inner product spaces) typically are no longer equivalent on normed spaces where the norm is not induced by the inner product. Arguably, one of  the best known and useful is the Birkhoff-James (BJ for short) orthogonality, whereby $x$ is orthogonal to $y$ (notation $x\perp y$) if for every scalar $\lambda$ we have
$$\|x+\lambda y\| \ge \|x\|.$$
It has a nice geometrical interpretation in that the origin is the closest point to $x$ on a line passing through $y$.

Presently, we are interested in preservers of this relation. The first results in this direction were obtained by  Koldobsky~\cite{Koldobsky} who classified linear maps which in one direction preserve BJ orthogonality on a real Banach space. This was later extended to normed real or complex spaces by Blanco-Turn\v sek \cite{BlancoTurnsek2006}. The result is in both cases the same: such linear maps are scalar multiples of isometry. We refer to W\'ojcik \cite{Wojcik} for a simple proof as well as further generalization to additive BJ preservers on real space. We were  informed by A. Peralta that, in  a recent preprint~\cite{peralta2024}, they were able to classify additive BJ preservers also on complex normed spaces.

One can try to remove linearity assumption at the expense that  BJ orthogonality is preserved strongly and bijectively. This was done in~\cite{BlancoTurnsek2006} for projective smooth Banach spaces and extended recently by  Ili\v sevi\'c and Turn\v sek \cite{IlisevicTurnsekJMAA2022}, who showed that any BJ isomorphism between two smooth  (possibly noncomplete) normed spaces $X$ and $Y$ is a (conjugate)linear isometry, multiplied by a scalar-valued function.  However, if the norm is nonsmooth the results are different; for example, not every BJ isomorphism on abelian $C^\ast$-algebra  ${\bf c}_0$ (i.e., all sequences converging to $0$) is of this nice form;  see Blanco-Turn\v sek \cite[Example 3.5]{BlancoTurnsek2006} for more. This counterexample works also for finite sequences as long as they contain at least three terms. It is perhaps a little surprising that, at least among finite-dimensional $C^\ast$-algebras, these are the only pathological examples (see our main Theorem~\ref{thm:genera} below). We remark in passing that even without linearity, BJ orthogonality alone  knows  a lot about normed spaces -- it can compute the dimension \cite[Theorems 3.6 and 3.9]{TanakaNonlinear} and characterize  smooth spaces up to (conjugate)linear isometry \cite{TanakaIndagationes,BJNorm} and knows if a given $C^\ast$-algebra is abelian or not \cite[Theorem 3.5]{TanakaAbelian} or if the norm is induced by the inner product~\cite[18.7]{Amir}.

Throughout we will  regard  a finite-dimensional $C^*$-algebra $\A:=\bigoplus_1^\ell M_{n_k}(\CC)$ to be embedded into $M_N(\CC)$ for $N:=n_1+\dots+n_\ell$ (every complex finite-dimensional $C^\ast$-algebra $\A$ has such decomposition, see a book by Goodearl~\cite{goodearl}). Following~\cite{Kuzm-Sush-abelian} we call the sum of $1$-by-$1$ blocks an \emph{abelian summand of $\A$}; this is $\ast$-isomorphic to $(\CC^k,\|\cdot\|_\infty)$, the space of column vectors equipped    with supremum norm, for some $k\ge 0$. The center, $Z(\A)$ equals $\bigoplus_1^\ell \CC I_{n_k}$ and is $\ast$-isomorphic to $(\CC^\ell,\|\cdot\|_\infty)$.  Thus, any positive-definite $P\in Z(\A)$  can be identified with an $\ell$-tuple of positive numbers $(r_1,\dots,r_\ell)$.  Given  $A\in\A$, recall that 
$$M_0(A):=\Ker(\|A\|^2 I- A^\ast A)=\{x\in\CC^N;\;\;\|Ax\|=\|A\|\cdot\|x\|\}$$ is the set of all vectors $x\in\CC^N$ where $A$ attains its norm (i.e.,  such that $\|Ax\|=\|A\|\cdot\|x\|$; here and throughout, a norm without an index will always refer to an Euclidean norm $\|x\|:=\sqrt{x^\ast x}$) see~\cite[Lemma 3.1]{simple} and~\cite{Kuzm-Sush-abelian} for more details. By decomposing $A=A_1\oplus\dots\oplus 
A_{\ell}$ we see that $M_0(A)=M_0(PA)$ for some $P\in Z(\A)$ if and only if $A$ and $PA$ attain their norms at the same summands.

    Let ${\mathbb T}\subseteq\CC$ be the unimodular group. For brevity, we call a bijective map which strongly preserves BJ orthogonality to be a \emph{BJ isomorphism}.

\begin{theorem}\label{thm:genera}
    Let $\Phi\colon\A\to\B$ be a BJ isomorphism between  complex $C^\ast$-algebras $\A$ and $\B$. If  $\A$ is finite-dimensional and has  no abelian summand, then  $\A$ and $\B$ are isomorphic $C^\ast$-algebras and there exists a real-linear surjective isometry $\Psi\colon\A\to\B$, a function $\gamma\colon\A\to{\mathbb T}$ and a central-valued  function $P\colon\A\to Z(\A)$ with $P(X)>0$ (positive-definite) and  $M_0(X)=M_0(P(X)X)$ such that
    $$\Phi(X)=\Psi(\gamma(X)P(X)\cdot X),\qquad \ X\in\A.$$
\end{theorem}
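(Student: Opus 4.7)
The plan is to combine three ingredients. First, to recognize that BJ orthogonality already encodes enough $C^\ast$-algebraic information to force $\B\cong\A$; second, to apply the Blanco--Turn\v sek / Ili\v sevi\'c--Turn\v sek rigidity at smooth points of the unit sphere in order to describe $\Phi$ locally; and third, to use the absence of abelian summands in $\A$ to paste these local descriptions together into a single real-linear isometry $\Psi$, absorbing any remaining slack into the pointwise factor $\gamma(X)P(X)$.

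For the structural transfer, I would use that BJ orthogonality computes dimension \cite[Theorems 3.6 and 3.9]{TanakaNonlinear} and detects abelianness \cite[Theorem 3.5]{TanakaAbelian} to force $\B$ to be a finite-dimensional $C^\ast$-algebra of the same dimension as $\A$ and without abelian summand. The block sizes $(n_k)$ can be recovered from the BJ graph by characterizing central projections, and more generally elements supported on a single summand, purely through their orthogonality cones; the paper's keywords mention \emph{relative left-symmetric points}, and this is the natural tool. Applying this to $\Phi$ and $\Phi^{-1}$ pins down the multiset $(n_k)$, so after a $\ast$-isomorphism one may assume $\B=\A$. For the local description, at any $X$ whose modulus $X^\ast X$ has a unique top eigenvalue, $X$ is a smooth point; here \cite{BlancoTurnsek2006,IlisevicTurnsekJMAA2022} force the local behavior of $\Phi$ to be that of a (conjugate)linear isometry times a nonzero scalar $\lambda(X)\in\CC$. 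More generally, among all rescalings $X\mapsto cX$ that preserve the right BJ-cone of $X$, precisely those of the form $cX=\gamma P X$ with $\gamma\in\T$, $P\in Z(\A)$ positive-definite, and $M_0(X)=M_0(PX)$ are admissible, since by \cite[Lemma 3.1]{simple} this last identity is what preserves the orthogonality set. This isolates the candidate functions $\gamma(X)$ and $P(X)$ and a candidate $\Psi$ determined by $\Psi(\gamma(X)P(X)X):=\Phi(X)$.

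The main obstacle, and the place where the no-abelian-summand hypothesis is essential, is to show that this candidate $\Psi$ is a single real-linear map rather than a family of real-linear isometries varying with $X$. In the abelian case the coherence genuinely fails, as Blanco--Turn\v sek's example \cite[Example 3.5]{BlancoTurnsek2006} shows: the freedom to rescale each coordinate independently produces pathological BJ isomorphisms. On a matrix block $M_{n_k}(\CC)$ with $n_k\ge 2$, however, the non-commutative BJ relations between elements with overlapping ranges rigidly couple the local pieces. I would formalize this by picking real-linearly independent $X,Y$ in a single non-abelian summand and exploiting that the orthogonality cones $(aX+bY)^\perp$ for $a,b\in\RR$ force the rescaling factors attached to $X$, $Y$, and $aX+bY$ to be mutually compatible. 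Iterating along a real basis of each non-abelian summand, and then combining summands through the central projections identified in the first step, should upgrade the pointwise real-linearity into a global $\Psi\colon\A\to\A$. A concluding verification that $\Psi$ is isometric and that $\Phi(X)=\Psi(\gamma(X)P(X)X)$ then closes the argument.
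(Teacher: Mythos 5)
Your proposal has a genuine gap at its second ingredient, and this gap is precisely where almost all of the work in the actual proof lives. You propose to ``apply the Blanco--Turn\v sek / Ili\v sevi\'c--Turn\v sek rigidity at smooth points of the unit sphere in order to describe $\Phi$ locally.'' But those theorems do not apply here: the nonlinear rigidity results of \cite{BlancoTurnsek2006} and \cite{IlisevicTurnsekJMAA2022} classify BJ isomorphisms between \emph{smooth normed spaces}, i.e.\ spaces in which \emph{every} nonzero point is smooth. The algebra $M_n(\CC)$ with the operator norm is not smooth for $n\ge 2$ (the identity, or any matrix whose top singular value has multiplicity $\ge 2$, is a non-smooth point), so neither theorem yields any conclusion, local or global. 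Moreover, since a BJ isomorphism is not assumed continuous or additive, there is no meaningful ``local behavior at a smooth point'' to which a rigidity theorem could attach; these results are statements about the map on the entire space. The actual proof has to build this rigidity from scratch: it first proves Theorem~\ref{thm:simplecase} by characterizing rank-one matrices intrinsically via relative left-symmetric points of outgoing spaces (Lemmata~\ref{lem:lastrow-column}--\ref{lem:zadnja} and Lemma~\ref{lem:rk-1classificatin}), deducing that $\Phi$ preserves rank one and adjacency, invoking Faure's fundamental theorem of projective geometry for $n\ge 3$, and running a separate argument via Geh\'er's angle-preserver theorem for $n=2$. None of this is replaceable by a citation to smooth-space rigidity.

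Your third ingredient is also underdeveloped, though it is a secondary issue. The claim that the cones $(aX+bY)^\perp$ for real $a,b$ ``force the rescaling factors to be mutually compatible'' is not a mechanism; since the theorem's conclusion genuinely permits an arbitrary unimodular factor $\gamma(X)$ and an arbitrary admissible central positive factor $P(X)$ varying with $X$, no such naive coherence across real-linear combinations can hold, and the content to be proved is only that the \emph{ratios} $\gamma_i(A)/\gamma_j(A)$ of the blockwise scalars are positive reals (equal when both blocks are norm-attaining). The paper establishes this by explicit constructions: rank-one elements $x_ix_i^\ast\oplus(-x_jx_j^\ast)$ orthogonal to $I$ (Lemma~\ref{lem:gamma1=gamma2}), intersections of numerical ranges of rank-one compressions, and a norm-attainment argument (Steps~1--6 of the proof). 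Your first ingredient (transferring the structure of $\A$ to $\B$ and matching blocks) is in the right spirit but is carried in the paper by the prior results of \cite{kuzma_singla_2025} rather than by the Tanaka dimension/abelianness results alone, which do not by themselves recover the block decomposition.
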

\begin{remark}
The proof will establish even more: By identifying $\B=\A$, for  every  $\Psi\colon\A\to\A$  from Theorem~\ref{thm:genera}, there exists a $C^\ast$ direct sum decomposition $\A=\A_1\oplus\A_2\oplus \A_3$ such that 
    \begin{align*}
        \Psi|_{\A_1}(X_1)&=U_1\sigma(X_1)V_1^\ast,\\\Psi|_{\A_2}(X_2)&=U_2\sigma((X_2^\ast)^T) V_2^\ast=U_2\sigma(\overline{X_2}) V_2^\ast\\
         \Psi|_{\A_3}(X_3)&=U_3\sigma(X_3^\ast) V_3^\ast
    \end{align*} for some unitary  $U_i,V_i\in\A_i$ and permutation~$\sigma$ which permutes the minimal ideals of the same dimension in $\A$, i.e., if $A=\bigoplus_1^\ell A_k\in\A=\bigoplus_1^\ell M_{n_k}(\CC)$, then $\sigma(A):=\bigoplus A_{\sigma(k)}$ and $n_k=n_{\sigma(k)}$. 
\end{remark}
When specializing to a simple finite-dimensional $C^\ast$-algebra the result has a more compact form, interesting in it own:
\begin{theorem}\label{thm:simplecase}
    Let $n\ge 2$ and let $\Phi\colon M_n(\CC)\to M_n(\CC)$ be  a  BJ isomorphism. Then there exists a nonzero scalar-valued function $\gamma\colon M_n(\CC)\to\CC\setminus\{0\}$ such that $X\mapsto \gamma(X)\Phi(X)$ is a (conjugate)linear isometry. 
\end{theorem}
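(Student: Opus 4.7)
The plan is to deduce Theorem~\ref{thm:simplecase} directly from Theorem~\ref{thm:genera} (together with its accompanying Remark), specialized to $\A=\B=M_n(\CC)$ with $n\ge 2$. Because $M_n(\CC)$ is simple and contains no $1\times 1$ block, its abelian summand is trivial, and Theorem~\ref{thm:genera} supplies a real-linear surjective isometry $\Psi\colon M_n(\CC)\to M_n(\CC)$, a function $\eta\colon M_n(\CC)\to\T$ and a positive-definite central-valued map $P$ such that
$$\Phi(X)=\Psi\bigl(\eta(X)\,P(X)\cdot X\bigr),\qquad X\in M_n(\CC).$$

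Next I would collapse the central factor. Since $Z(M_n(\CC))=\CC I_n$, we necessarily have $P(X)=p(X)I_n$ for some real $p(X)>0$, so $\eta(X)P(X)X=c(X)X$ where $c(X):=\eta(X)p(X)\in\CC\setminus\{0\}$. The identity $\Phi(X)=\Psi\bigl(c(X)X\bigr)$ thus reduces the whole problem to pulling the nonzero scalar $c(X)$ out of $\Psi$.

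At this point the Remark does the remaining work. As $M_n(\CC)$ has a unique minimal ideal, the permutation $\sigma$ is the identity, so $\Psi$ must be globally of one of the three listed forms: the $\CC$-linear $X\mapsto UXV^\ast$, or the conjugate-linear $X\mapsto U\overline{X}V^\ast$ and $X\mapsto UX^\ast V^\ast$, for some unitaries $U,V\in M_n(\CC)$. In the linear case $\Psi(c(X)X)=c(X)\Psi(X)$, so setting $\gamma(X):=c(X)^{-1}$ gives $\gamma(X)\Phi(X)=\Psi(X)$, a complex-linear isometry. In the conjugate-linear cases $\Psi(c(X)X)=\overline{c(X)}\Psi(X)$, so $\gamma(X):=\overline{c(X)}^{\,-1}$ works instead, yielding a conjugate-linear isometry. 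Either way one obtains $\gamma\colon M_n(\CC)\to\CC\setminus\{0\}$ with the required property.

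The main obstacle therefore does not lie in Theorem~\ref{thm:simplecase} itself, which is essentially bookkeeping, but in producing the global decomposition of Theorem~\ref{thm:genera} and its Remark; once those are available the present theorem follows by collapsing $P(X)$ to a positive scalar, absorbing the unimodular factor $\eta(X)$, and exploiting the dichotomy that on a simple algebra the ambient isometry $\Psi$ is globally either $\CC$-linear or conjugate-linear.
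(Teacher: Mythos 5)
Your proposal is circular and therefore does not constitute a proof. The paper explicitly states, just after the Remark to Theorem~\ref{thm:simplecase}, that Theorem~\ref{thm:simplecase} is proved \emph{first} and that Theorem~\ref{thm:genera} is then derived from it as a corollary (see the proof of Theorem~\ref{thm:genera}, which applies Theorem~\ref{thm:simplecase} to each block restriction $\Phi_i\colon M_{n_i}(\CC)\to M_{n_{\sigma(i)}}(\CC)$ to obtain the isometries $\Psi_i$). The same applies to the Remark you invoke: the three-fold decomposition of $\Psi$ is announced as something ``the proof will establish,'' i.e.\ it is a byproduct of the argument that itself rests on Theorem~\ref{thm:simplecase}. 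By deducing Theorem~\ref{thm:simplecase} from Theorem~\ref{thm:genera} and its Remark you assume the harder statement in order to prove the easier one, so the logical content of the theorem is never addressed.

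The actual work, which your proposal bypasses entirely, is to show from scratch that a bijection strongly preserving BJ orthogonality on $M_n(\CC)$ is a scalar-valued-function multiple of a (conjugate)linear isometry. The paper does this by: (1) characterizing rank-one matrices purely in BJ terms via relative left-symmetric points of outgoing spaces (Lemmas~\ref{lem:lastrow-column}--\ref{lem:zadnja} and Lemma~\ref{lem:rk-1classificatin}), so that $\Phi$ preserves $\mathcal R_1$ and, for $n\ge 3$, adjacency; (2) for $n\ge 3$, invoking the fundamental theorem of projective geometry (Faure) on the induced map of rows and columns to get $\Phi(xy^\ast)\dot{=}(Ux)(Vy)^\ast$, then Lemma~\ref{lem:A=B} to extend to higher rank; (3) a separate, delicate analysis for $n=2$ using Geh\'er's classification of angle-$\tfrac{\pi}{4}$ preservers on $\PP(\CC^2)$. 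If you want a correct proof you must supply an argument of this kind (or an independent proof of Theorem~\ref{thm:genera} that does not pass through Theorem~\ref{thm:simplecase}); your scalar-bookkeeping in the final paragraph is fine but it is not where the difficulty lies.
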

\begin{remark}
    We will also show within the proof of Theorem~\ref{thm:simplecase} that the group of  (conjugate)linear isometries on a $C^\ast$-algebra $M_n(\CC)$, $n\ge 2$, is generated by
    \begin{equation}\label{eq:conjugate-linear_isometries}
        X\mapsto X^\ast\quad\hbox{ and }\quad X\mapsto UXV.
    \end{equation}
    where $(U,V)$ are either both unitary or both conjugate-linear unitary.   
\end{remark}

Actually, we will first prove Theorem~\ref{thm:simplecase}, and then Theorem~\ref{thm:genera} will follow as a corollary, with the help of some of our previous results \cite{kuzma_singla_2025}.

For a general finite-dimensional $C^\ast$-algebra~$\A$  it was shown in~\cite[Corollary~2.9]{kuzma_singla_2025} that a BJ isomorphism  $\Phi\colon\A\to\B$ will map nonabelian summand of $\A$ onto nonabelian summand of $\B$ and will map abelian summand of $\A$ onto abelian summand of $\B$. The restriction to nonabelian summand is covered within Theorem~\ref{thm:genera}; the restriction to abelian summand is covered by Tanaka~\cite[Theorem~5.5]{TanakaNonlinear2} and takes the form 
$$x=(x_1,\dots,x_k)\mapsto(\gamma_1(x)x_{\sigma(1)},\dots, \gamma_k(x)x_{\sigma(k)})$$
for a fixed permutation $\sigma$ and suitable scalars $\gamma_i(x)\in\CC$. One should mention  that~\cite[Theorem~5.5]{TanakaNonlinear2} gives much more: it classifies BJ isomorphisms on a general (possibly nonunital) complex abelian $C^\ast$-algebra; they are weighted composition operators where weights are not constant but depend on the argument and might be discontinuous (consider, e.g., a weight $\omega_f(x):=\sin\left(\frac{\pi}{2x}\right)$ associated to a function $f(x)=x\in {\mathcal C}([0,1])$ and notice that $\omega_f(x)f(x)\in {\mathcal C}([0,1])$). Interestingly, this was proven by showing that BJ isomorphisms on abelian $C^\ast$-algebra are also isomorphisms of a strong BJ orthogonality, defined by~\cite{ArambasicRajic2014}, and then using~\cite[Theorem 5.2]{TanakaContinuous}.

\section{Preliminaries}
For a vector $x$ in a normed space~$\cV$ we let
$$x^\bot=\{y\in\cV;\; x\perp y\}\quad\text{ and }\quad {}^\bot\! x=\{y\in\cV;\; y\perp x\},$$ be its \emph{outgoing} and \emph{incoming neighbourhood}, respectively (throughout, $\perp $ denotes BJ orthogonality). In general they are different; when $\dim\cV\ge 3$, they always coincide if and  only if   the norm is induced by the inner product (see~\cite[18.7]{Amir}), in which case they  further coincide with the usual orthogonal complement. Also, $x$ is  \textit{left-symmetric}  if $x\perp y$ implies $y\perp x$, or equivalently, if  $x^\bot\subseteq {}^\bot\! x$. For $\mathcal S\subseteq \cV$, let
    $$\mathcal L_{\mathcal S}:=\{x\in\mathcal S;\;\;x^\bot	\cap {\mathcal S}\subseteq {}^\bot\!x\cap {\mathcal S}\}.$$
    denote the set of all left-symmetric vectors relative to $\mathcal S$.
        In particular, if $\mathcal S=\cV$, then $\mathcal L_{\cV}$ is the set of all left-symmetric vectors.

It is known that the whole algebra $M_n(\CC)$ contains no left-symmetric points different from zero; see, e.g.,~\cite[Corollary~3.4]{Turnsek2017}.  However, there might exist a subspace  $\cV\subseteq M_n(\CC)$ within which we do find nontrivial relative left-symmetric points; an example is the subspace of all diagonal matrices, which is an abelian $C^\ast$-algebra  and where every rank-one matrix is left-symmetric (see~\cite[Lemma~2.2]{Kuzm-Sush-abelian}).
Within the present section we determine the maximal possible dimension of such subspace~$\cV$, obtained as intersections of common outgoing neighborhoods of smooth elements (they will be called \emph{outgoing spaces}) and also determine all relative left-symmetric points within them. This will be the starting point to prove Theorem~\ref{thm:simplecase}, which will then be used to prove Theorem~\ref{thm:genera}.

Our main tool is the following characterization of BJ orthogonality, valid for finite-dimensional $C^\ast$-algebras. This result was first obtained by Stampfli~\cite{STAMPF} for Hilbert-space operators with $A=I$, the identity; later it was generalized by Magajna~\cite{MAGAJN} for general Hilbert-space operators. A different proof valid for complex or real matrices under the spectral norm was  discovered  by Bhatia-\v Semrl~\cite{Bhat-Semr}, see also~\cite{LiSchneider2002} and~\cite{Benitez2007} for more. Recall that a finite-dimensional $C^\ast$-algebra~$\A=\bigoplus_1^\ell M_{n_i}(\CC)$ is tacitly embedded into $M_N(\CC)$ for  $N=n_1+\dots+n_\ell$; since the definition of BJ orthogonality of elements $A,B\in\A$ requires only two-dimensional space spanned by $A,B$, we have that $A\perp B$ in $\A$ if and only if $A\perp B$ regarded as elements of $M_N(\CC)$.   Denote also 
    $\langle x,y\rangle:=y^\ast x$ for $x,y\in\CC^N$.
        \begin{proposition}[\cite{Kuzm-Sush-abelian}, Lemma 2.1]\label{prop:M_0(A)}
        Let $\A=\bigoplus_{k}^\ell\mathcal M_{n_k}(\CC)$ be a $C^*$-algebra  and  $A,B\in\A$. Then, $B\in A^\bot$ if and only if $\langle Ax,Bx\rangle =0$ for some  normalized  $x\in M_0(A)$. Moreover,  if  $\|A_i\|<\max\{\|A_k\|;\;\;1\leq k\leq\ell\}=\|A\|$, then $$A^\bot = \big(A_1\oplus\dots\oplus A_{i-1}\oplus 0_{n_i}\oplus A_{i+1}\oplus\dots\oplus A_\ell\big)^\bot.$$
    \end{proposition}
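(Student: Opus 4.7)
The plan is to prove the ``if'' direction by direct computation, the ``only if'' direction by a numerical-range convexity argument, and the ``Moreover'' clause by a blockwise reduction.

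If $x\in M_0(A)$ is a unit vector with $\langle Ax,Bx\rangle=0$, expansion immediately gives
\begin{equation*}
\|(A+\lambda B)x\|^2 = \|Ax\|^2 + 2\re(\lambda\langle Bx,Ax\rangle) + |\lambda|^2\|Bx\|^2 = \|A\|^2 + |\lambda|^2\|Bx\|^2 \ge \|A\|^2,
\end{equation*}
so $A\perp B$. Conversely, assume $A\perp B$. For any fixed $\lambda\in\CC\setminus\{0\}$, choose positive reals $t_n\downarrow 0$ and unit vectors $x_n$ at which $A+t_n\lambda B$ attains its norm. After extracting a convergent subsequence $x_n\to x_\lambda$, the squeeze $\|A\|\le \|A+t_n\lambda B\|\le \|A\|+t_n|\lambda|\|B\|$ and continuity force $\|Ax_\lambda\|=\|A\|$, i.e.\ $x_\lambda\in M_0(A)$. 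Plugging $x_n$ into $\|(A+t_n\lambda B)x_n\|^2\ge\|A\|^2$, using $\|Ax_n\|\le\|A\|$, dividing by $t_n$ and letting $n\to\infty$ yields the one-sided estimate $\re(\lambda\langle Bx_\lambda,Ax_\lambda\rangle)\ge 0$.

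Let $P$ denote the orthogonal projection of $\CC^N$ onto $M_0(A)$ and consider the numerical range
\begin{equation*}
W:=\{\langle Bx,Ax\rangle;\;\; x\in M_0(A),\ \|x\|=1\}=\{x^\ast PA^\ast BPx;\;\; x\in\CC^N,\ \|x\|=1,\ Px=x\},
\end{equation*}
a convex compact subset of $\CC$ by the Toeplitz--Hausdorff theorem. Applying the previous paragraph with $\lambda$ and with $-\lambda$ shows that $W$ meets both closed half-planes $\{z;\ \re(\lambda z)\ge 0\}$ and $\{z;\ \re(\lambda z)\le 0\}$, and convexity then forces $W$ to meet the line $\{z;\ \re(\lambda z)=0\}$; every real line through the origin therefore hits $W$. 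If $0\notin W$, Hahn--Banach would produce a direction $\lambda_0$ with $\re(\lambda_0 z)>0$ for all $z\in W$, a contradiction. Hence $0\in W$, furnishing the required unit $x\in M_0(A)$ with $\langle Ax,Bx\rangle=0$.

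For the ``Moreover'' clause, set $A':=A_1\oplus\dots\oplus 0_{n_i}\oplus\dots\oplus A_\ell$; since $\|A_i\|<\|A\|$, still $\|A'\|=\|A\|$. For $x=(x_1,\dots,x_\ell)\in M_0(A')$ the bound $\|A'x\|^2=\sum_{k\ne i}\|A_kx_k\|^2\le \|A\|^2(\|x\|^2-\|x_i\|^2)$ combined with $\|A'x\|^2=\|A\|^2\|x\|^2$ forces $x_i=0$, after which $Ax=A'x$ gives $x\in M_0(A)$; the reverse inclusion is even easier (using $\|A_ix_i\|<\|A\|\|x_i\|$ for $x_i\ne 0$), so $M_0(A)=M_0(A')$ and $Ax=A'x$ on this common set. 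Part~(1) applied to both $A$ and $A'$ then yields $A^\bot=(A')^\bot$. The main obstacle is the nontrivial direction of~(1): the vector $x_\lambda$ obtained from the perturbation depends on $\lambda$, so one cannot fix a single $x$ and vary $\lambda$ over two independent directions to separately extract real and imaginary parts of $\langle Ax,Bx\rangle$. Convexity of the numerical range (Toeplitz--Hausdorff) is precisely the tool that packages this per-$\lambda$ information into the existence of one vector with vanishing inner product.
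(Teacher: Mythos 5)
Your proof is correct. Note, however, that the paper does not prove this proposition at all: it is imported verbatim from \cite{Kuzm-Sush-abelian} (Lemma 2.1), and the surrounding text attributes the underlying result to Stampfli, Magajna and Bhatia--\v Semrl. So there is no in-paper argument to compare against; what you have written is a self-contained reconstruction along the classical Bhatia--\v Semrl lines. The three ingredients all check out: the ``if'' direction is the direct expansion of $\|(A+\lambda B)x\|^2$ with vanishing cross term; the ``only if'' direction correctly extracts, for each $\lambda$, a norming limit vector $x_\lambda\in M_0(A)$ with $\re\bigl(\lambda\langle Bx_\lambda,Ax_\lambda\rangle\bigr)\ge 0$, and then uses Toeplitz--Hausdorff convexity of the numerical range of the compression $PA^\ast BP|_{M_0(A)}$ together with a separation argument to locate $0$ in that range --- this is exactly the device needed because $x_\lambda$ depends on $\lambda$; and the ``Moreover'' clause follows from the blockwise identity $M_0(A)=M_0(A')$ and $Ax=A'x$ on that set, which you verify correctly from $\|A_i\|<\|A\|$. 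The only stylistic remark is that your argument tacitly uses finite-dimensionality twice (existence of norming vectors for $A+t_n\lambda B$ and compactness of the unit sphere for the subsequence extraction), which is legitimate here since $\A$ embeds in $M_N(\CC)$, but worth flagging since Proposition~\ref{prop:M_0(A)} as used in the paper is always in that finite-dimensional setting.
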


We apply this on rank-one $A=xy^\ast$, which attains its norm only on the scalar multiples of  $y$. Hence, $A\perp B$ if and only if $0=\langle Ay, By\rangle=\langle x,By\rangle:=(By)^\ast x=y^\ast B^\ast x$. By conjugating we thus get
\begin{equation}\label{eq:rk-1-perp}
 (xy^\ast)^\bot=\{X;\;\;x^\ast Xy=0\}.   
\end{equation}

   \begin{definition}\label{def}
        We  call $A\in\A$ to be \textit{smooth} if  there does not exist  $B\in\A$ such that $B^\bot\subsetneq A^\bot$.
    \end{definition}
        By~\cite[Lemma 2.5]{Kuzm-Sush-abelian} this definition  agrees with the classical definition of smoothness, i.e., of having a unique supporting functional. Moreover, it was proved in~\cite[lemmata 2.5 and 2.6]{Kuzm-Sush-abelian} that $A=\bigoplus_{1}^\ell A_k\in\A$ is smooth  if and only if
    there exists exactly one index $j$ such that $\|A_j\|=\|A\|$ and $\dim M_0(A_j)=1$. Moreover, if $A$ is smooth and $x=\bigoplus_{1}^\ell x_k\in M_0(A)$, then $x_k=0$ for all $k\neq j$~and \begin{equation}\label{eq:smooth-rank-one}
        A^\bot=\big(0\oplus ((A_jx_j)x_j^*)\oplus 0\big)^\bot.
    \end{equation}
For example, $A=0\oplus E_{st}^j\oplus 0$ are smooth elements for all matrix units $E_{st}^j\in\mathcal M_{n_j}(\CC)$.
\section{Maximal subspaces of relative left-symmetry}

Throughout, $\C^n$ will denote the Euclidean $n$-dimensional complex normed space of column vectors (i.e., $n$-by-$1$ matrices) with $e_1,\dots,e_n$ being its standard basis. Hence, on  $\C^n$, the BJ orthogonality coincides with the usual one.

 A well-known singular value decomposition (SVD for short) says that each complex matrix $A$ can be written as 
 $A=\sigma_1(A) x_1y_1^\ast +\dots+\sigma_n(A) x_n y_n^\ast$, where $x_1,\dots, x_n$ and $y_1,\dots, y_n$ are two orthonormal basis of $\CC^n$ and $\sigma_1(A)\ge \dots\ge \sigma_n(A)\ge 0$ are the singular values of $A$.

 We start with a technical lemma which we will use to prove that some subspaces of $M_n(\CC)$ \emph{cannot have} nontrivial left-symmetricity.

\begin{lemma}\label{lem:lastrow-column} Let $n\ge 2$,  let $x,y\in\C^n$, and let $B=xe_n^\ast +e_ny^\ast$ have nonzero entries at positions $(1n),(n1)$, and $(nn)$. Then its SVD has two summands and  both  have nonzero $(11)$ entry. Moreover, $\sigma_1(B)>\sigma_2(B)$. 
\end{lemma}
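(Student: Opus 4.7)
The plan is to treat $B$ as a $2 \times 2$ matrix in disguise. Write $x = x_n e_n + x_\perp$ and $y = y_n e_n + y_\perp$ with $x_\perp, y_\perp$ orthogonal to $e_n$. The three nonvanishing-entry hypotheses then translate to $x_1 \neq 0$ (hence $\|x_\perp\| > 0$), $y_1 \neq 0$ (hence $\|y_\perp\| > 0$), and $x_n + \bar{y}_n \neq 0$. Observe that the column space of $B$ is contained in $\Span(x, e_n)$ and the row space in $\Span(e_n, y)$, both $2$-dimensional; hence $B$ has rank $2$ and its SVD has exactly two summands $B = \sigma_1 u_1 v_1^\ast + \sigma_2 u_2 v_2^\ast$ with $u_i \in \Span(x, e_n)$ and $v_i \in \Span(e_n, y)$.

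For the $(1,1)$-entry $\sigma_i (u_i)_1 \overline{(v_i)_1}$ of the $i$-th summand to be nonzero, I need $(u_i)_1, (v_i)_1 \neq 0$. Since the only vector in $\Span(x, e_n)$ with vanishing first coordinate is a multiple of $e_n$, it suffices to check that $e_n$ is not an eigenvector of $BB^\ast$ (and analogously not of $B^\ast B$, for the $v_i$'s). Direct computation gives
$$BB^\ast e_n = (\bar{x}_n + y_n)\, x + \kappa\, e_n$$
for some scalar $\kappa$; since $x \notin \CC e_n$, this is a multiple of $e_n$ only if $\bar{x}_n + y_n = 0$, equivalently $x_n + \bar{y}_n = 0$, excluded by hypothesis. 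A symmetric computation shows that the $y$-component of $B^\ast B e_n$ equals $(x_n + \bar{y}_n)\, y$, which handles the $v_i$'s. Hence $(u_i)_1, (v_i)_1 \neq 0$ for $i = 1,2$.

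The strict inequality $\sigma_1(B) > \sigma_2(B)$ is the main technical point. Since $B$ has rank $2$, its two singular values coincide with those of the restriction $B|_{\Span(e_n, y)} \colon \Span(e_n, y) \to \Span(x, e_n)$, which is an isomorphism. Pick the orthonormal bases $\{e_n,\ y_\perp/\|y_\perp\|\}$ and $\{e_n,\ x_\perp/\|x_\perp\|\}$ of domain and codomain. The formulas $Be_n = (x_n + \bar{y}_n) e_n + x_\perp$ and $By = y_n x + \|y\|^2 e_n$ (so that $B(y - y_n e_n) = \|y_\perp\|^2 e_n$) then yield the $2\times 2$ representing matrix
$$M = \begin{pmatrix} x_n + \bar{y}_n & \|y_\perp\| \\ \|x_\perp\| & 0 \end{pmatrix}.$$
Equal singular values of $M$ force $M^\ast M$ to be scalar, in particular its off-diagonal entry $\overline{(x_n + \bar{y}_n)}\,\|y_\perp\|$ must vanish. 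Since $\|y_\perp\| > 0$, this forces $x_n + \bar{y}_n = 0$, contradicting the hypothesis.

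The main obstacle is this last step: choosing orthonormal bases so that $B$ reduces to a transparent $2 \times 2$ matrix, and carefully tracking the conjugations that appear because the second summand is $e_n y^\ast$ (not $e_n y^T$). Once the $2 \times 2$ reduction is in place, the first two claims reduce to short verifications.
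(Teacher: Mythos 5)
Your proof is correct, and for the first assertion it takes a genuinely cleaner route than the paper. The paper proves that both SVD summands have nonzero $(11)$ entry by a case analysis: using $B_{11}=0$ it shows that if one summand had zero $(11)$ entry then so would the other, which forces a specific pattern of vanishing first components among $u_1,u_2,v_1,v_2$, and each pattern is then driven to the contradiction $B_{nn}=0$. You instead observe that the left (resp.\ right) singular vectors must lie in $\mathrm{Im}(B)=\Span\{x,e_n\}$ (resp.\ $\mathrm{Im}(B^\ast)=\Span\{e_n,y\}$), where the only direction with vanishing first coordinate is $\CC e_n$, and then exclude $e_n$ from being an eigenvector of $BB^\ast$ or $B^\ast B$ by a one-line computation; this treats all four singular vectors uniformly and also covers the a priori degenerate possibility $\sigma_1=\sigma_2$ without appealing to the third claim. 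For the strict inequality $\sigma_1>\sigma_2$ the two arguments are essentially identical: the paper replaces $(x,y)$ by $(x',y')$ with $y'\perp e_n$ and conjugates $B^\ast B$ to a $2\times2$ block whose off-diagonal entry is a nonzero multiple of $B_{nn}$, and your $M^\ast M$ is exactly that block in the basis $\{e_n,\ y_\perp/\|y_\perp\|\}$. One small slip: containment of the column and row spaces in two-dimensional subspaces only gives $\rank B\le 2$, not $\rank B=2$; but your later formulas $By_\perp=\|y_\perp\|^2 e_n$ and $Be_n=x_\perp+(x_n+\bar{y}_n)e_n$ with $x_\perp\neq0$ already exhibit a two-dimensional image, so this is purely an expository gap, not a mathematical one.
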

\begin{proof}

Clearly,  $B$ is of rank-two so its SVD,
$$B=\sigma_1 u_1v_1^\ast+\sigma_2 u_2v_2^\ast$$
has two summands.

If both $u_1$ and $u_2$ have a zero as a first component, then $B$ has $0$ on position $(1n)$, which is contradictory. Similar conclusion holds for  $v_1$ and $v_2$, for otherwise the $(n1)$.position of  $B$ would vanish. So, at least one among $u_1,u_2$ is not orthogonal to $e_1$ and the same is true for $v_1,v_2$.

Assume, contrary to the claim, that the $(11)$ entry of $\sigma_1 u_1v_1^\ast$ or of $\sigma_2 u_2v_2^\ast$ is zero. Then, due to  $e_1^\ast Be_1=B_{11}=0$, both of them will have $(11)$ entry equal to zero. This is possible only  if  $$e_1^\ast u_1=e_1^\ast v_2=0\quad\hbox{ and }\quad e_1^\ast v_1,e_1^\ast u_2\neq0,$$
 or, vice-versa, 
 $$e_1^\ast u_2=e_1^\ast v_1=0\quad\hbox{ and  }\quad e_1^\ast v_2,e_1^\ast u_1\neq0. $$
 In the former case, notice that $ 
 \sigma_1  u_1(v_1^\ast e_1)=(\sigma_1 u_1v_1^\ast+\sigma_2 u_2v_2^\ast) e_1=Be_1\in\CC e_n\setminus\{0\}$. It follows that $u_1$ is parallel to $e_n$ and by transferring the appropriate scalar from $u_1$ to $v_1$ we achieve that $u_1=e_n$. Thus, comparing zero entries in $e_ny^\ast+xe_n^\ast =\sigma_1 e_nv_1^\ast+\sigma_2 u_2v_2^\ast$ and keeping in mind that $u_2$ is orthogonal to $u_1=e_n$ and $e_1^*u_2\neq0$, we see that $v_2$ is parallel to $e_n$ and again we can assume $v_2=e_n$. But then, $v_1\in v_2^\bot= \CC^{n-1}\oplus 0$ and $u_2\in u_1^\bot=\CC^{n-1}\oplus0$, so $ e_n^\ast Be_n=0$, a contradiction. Thus, the former case is impossible.
The latter case is analogously contradictory. 

Lastly, $\sigma_1(B)=\sigma_2(B)$ if and only if $B$ is a scalar multiple of a partial isometry, or equivalently (by \cite[p.~3]{sakai1971}), $B^\ast B$ is a multiple of a projection. We easily compute
\begin{equation}\label{eq:B*B}
B^\ast B=(e_nx^\ast+ye_n^\ast)(xe_n^\ast+e_ny^\ast)=\|x\|^2e_ne_n^\ast+yy^\ast+(x^\ast e_n)e_ny^\ast+(e_n^\ast x)ye_n^\ast.
\end{equation}
Now, 
$$   B=xe_n^\ast+e_n y^\ast=  
x'e_n^\ast+e_n(y')^\ast$$
 where $x'=x+(y^\ast e_n)e_n$ and $y'=y-(e_n^\ast y)e_n\perp e_n$.  This way we can achieve that in \eqref{eq:B*B}, $y$ is perpendicular to $e_n$. Then, there exists a unitary $W$ with $(W^\ast y,W^\ast e_n)=(\|y\|e_1,e_2)$ and hence, conjugating  \eqref{eq:B*B} with $W$  simplifies into
 $$W^\ast B^\ast BW=\begin{pmatrix}
   \|y\|^2 & \|y\|(e_n^\ast x) \\
   \|y\| (x^\ast e_n)& \|x\|^2
 \end{pmatrix}\oplus 0_{n-2}.$$
 By the assumptions, $e_n^\ast x=B_{nn}\neq0$, so the upper $2$-by-$2$ block of $W^\ast B^\ast BW$ is not a scalar matrix. Consequently, $\sigma_1(B)^2\neq\sigma_2(B)^2$. 
\end{proof}

\begin{lemma}\label{lem:V-(1<rk<n-1)}
   Let $n\ge 2$ and let $v_1,\dots,v_k$ and $u_1,\dots, u_k$ be two collections of  $k\le n-1$ normalized vectors in $\CC^n$. Define
    $$\cV:=\{ X\in M_n(\CC);\;\;v_1^\ast Xu_1=\dots=v_k^\ast Xu_k=0\}.$$ 
If a nonzero $A\in\cV$ is left-symmetric relative to $\cV$, then $\rank A\in\{1,n\}$. 
\end{lemma}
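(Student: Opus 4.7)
Assume $A \in \cV$ is nonzero, left-symmetric relative to $\cV$, and $2 \le \rank A \le n-1$; I aim to derive a contradiction by constructing $X \in \cV$ with $A \perp X$ but $X \not\perp A$. Since BJ orthogonality and the form of $\cV$ are preserved under simultaneous unitary changes of basis on both sides (the $v_\ell,u_\ell$ are replaced by new unit vectors), I may use the SVD of $A$ to assume $A = \diag(\sigma_1, \ldots, \sigma_r, 0, \ldots, 0)$ with $\sigma_1 \ge \cdots \ge \sigma_r > 0$. Then $e_1 \in M_0(A)$ and $e_n \in \ker A$, and $A$ acts as $\sigma_1 e_1 e_1^*$ on $\Span(e_1, e_n)$.

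The core candidate is the three-parameter matrix
$$X = \alpha\, e_1 e_n^* + \beta\, e_n e_1^* + \gamma\, e_n e_n^*, $$
whose image and coimage both lie in $\Span(e_1, e_n)$. For any $\alpha, \beta, \gamma$, the choice $z = e_1 \in M_0(A)$ gives $Xe_1 = \beta e_n \perp \sigma_1 e_1 = Ae_1$, so Proposition~\ref{prop:M_0(A)} yields $A \perp X$ automatically. When $\alpha\beta\gamma \ne 0$, Lemma~\ref{lem:lastrow-column} delivers $\sigma_1(X) > \sigma_2(X)$ and top singular vectors $v_1, u_1 \in \Span(e_1, e_n)$ with nonzero first coordinates; writing $v_1 = b_1 e_1 + b_n e_n$ and $u_1 = a_1 e_1 + a_n e_n$ with $a_1 b_1 \ne 0$, and using $Ae_n = 0$, one gets $Av_1 = \sigma_1 b_1 e_1$ and
$$\langle Xv_1, Av_1\rangle = \sigma_1(X)(Av_1)^* u_1 = \sigma_1(X)\,\sigma_1\, \overline{b_1}\, a_1 \ne 0, $$
whence $X \not\perp A$ by the same proposition (since $M_0(X) = \CC v_1$).

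It therefore suffices to find $(\alpha, \beta, \gamma)$ with $\alpha \beta \gamma \ne 0$ satisfying the $k \le n-1$ linear equations
$$\alpha(v_\ell^* e_1)(e_n^* u_\ell) + \beta(v_\ell^* e_n)(e_1^* u_\ell) + \gamma (v_\ell^* e_n)(e_n^* u_\ell) = 0,\qquad \ell = 1, \ldots, k. $$
For $k \le 2$ this is routine linear algebra, with the nonzero-coordinate requirement secured generically, or by a small rotation of $e_1$ within $M_0(A)$ (when $\dim M_0(A) \ge 2$) or of $e_n$ within $\ker A$ (when $n-r \ge 2$). The main obstacle arises for $k \ge 3$, where three parameters cannot absorb all constraints. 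To resolve this I would enlarge the ansatz to $X = x e_n^* + e_n y^*$ with $x, y \in W := M_0(A) + \ker A$; the key point is that $A(W) \subseteq M_0(A)$ and the image and coimage of $X$ remain in $W$, so the verification of $X \not\perp A$ goes through verbatim once the corner entries $x_1, y_1, x_n + \overline{y_n}$ (the analogues of Lemma~\ref{lem:lastrow-column}'s nonzero positions) are all nonzero. The enlarged parameter count $2(\dim M_0(A) + n - r) - 1$ beats $k \le n-1$ in all but the most extreme cases (small $\dim M_0(A)$ together with $r$ near $n$), which I would dispatch by switching to a rank-one ansatz $X = xy^*$: choose $y \notin \ker A$ together with $y \perp u_\ell$ for $\ell$ in a carefully chosen subset (to split the bilinear constraints $(v_\ell^* x)(y^* u_\ell)=0$ into linear ones on $x$ alone for the remaining indices), then use the slack from $k \le n-1$ to secure $x^* A y \ne 0$ and, if $\dim M_0(A) = 1$, the extra condition $y \perp e_1$ which forces $A \perp X$.
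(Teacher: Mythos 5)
Your core construction is attractive and, where it applies, correct: for $A=\diag(\sigma_1,\dots,\sigma_r,0,\dots,0)$ and $X=\alpha e_1e_n^\ast+\beta e_ne_1^\ast+\gamma e_ne_n^\ast$ with $\alpha\beta\gamma\neq0$, your verification of $A\perp X\not\perp A$ via Lemma~\ref{lem:lastrow-column} and Proposition~\ref{prop:M_0(A)} is sound. The problem is forcing such an $X$ into $\cV$, and each of your three devices for that has a hole. (1)~Even for $k=1$ the linear system can kill a required coefficient: if $(v_1)_n=0$ while $(v_1)_1(u_1)_n\neq0$, the single equation reads $\alpha\cdot(\text{nonzero})=0$; and when $\dim M_0(A)=\dim\Ker A=1$ there is no room for the ``small rotation'' you invoke. (2)~For the enlarged ansatz $X=xe_n^\ast+e_ny^\ast$ with $x,y\in W=M_0(A)+\Ker A$, the claim that $X\not\perp A$ ``goes through verbatim'' is false. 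Since $A|_W=\sigma_1P_{M_0(A)}$ and the singular vectors satisfy $u\in\Span\{x,e_n\}$, $v\in\Span\{y,e_n\}$, what you actually need is $P_{M_0(A)}x\not\perp P_{M_0(A)}y$, which does not follow from your three corner entries being nonzero: take $e_1,e_2\in M_0(A)$, $x=e_1+e_2+e_n$, $y=e_1-e_2+e_n$; then $x_1=y_1=1$, $x_n+\overline{y_n}=2$, yet for every $z$ in the row space $\Span\{y,e_n\}$ one has $Az\in\CC(e_1-e_2)\perp\Span\{x,e_n\}\supseteq\Image X$, so in fact $X\perp A$. Moreover Lemma~\ref{lem:lastrow-column} is proved only for the $2$-by-$2$ corner and supplies no non-degeneracy statement in $W$.

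(3)~Most seriously, the ``extreme cases'' you defer to a rank-one fallback ($\dim M_0(A)$ small, $r$ near $n$, $k$ up to $n-1$) are precisely the hard core of the lemma. The paper's opening reduction shows that rank-one witnesses of exactly the type you describe (with $x$ orthogonal to all $v_\ell$, or $y$ to all $u_\ell$) exist unless the span containments \eqref{eq:span_relations} hold; when they do hold, whether some mixed subset $S$ yields a valid rank-one $X$ depends delicately on the pairing of the $(v_\ell,u_\ell)$, and you give no argument that a working choice always exists. This is the point at which the paper is forced to engineer the explicit rank-two matrices $B_{\pm}$ -- still of your corner shape $xe_n^\ast+e_ny^\ast$, but with a component along a \emph{non-maximal} singular direction, i.e.\ outside your space $W$ -- together with a separate treatment of the degenerate parameters. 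Parameter counting cannot substitute for this. As written, the proposal is a plan with a correct kernel but with the decisive cases unproved.
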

\begin{proof} There is nothing to do if $n=2$, so we assume $n\ge 3$. Suppose otherwise that $2\le \rank A\le n-1$ and let   $$A=\sigma_1 x_1y_1^\ast+\sigma_2 x_2 y_2^\ast+\dots+\sigma_n x_n y_n^\ast$$ be its SVD (so $\sigma_2>0$). Assume there exist a nonzero  $$x\in (v_1^\bot\cap\dots\cap v_k^\bot)\ominus x_2^\bot.$$ 
Then, $B:=xy_2^\ast\in\cV$ and, by Proposition~\ref{prop:M_0(A)} (see also ~\eqref{eq:rk-1-perp}), clearly $A\perp B\not\perp A$. Likewise, if there exists a nonzero $y\in (u_1^\bot\cap\dots\cap u_k^\bot)\ominus y_2^\bot$, then $B:=x_2y^\ast$ has this property. In neither case $A$ is left-symmetric relative to $\cV$. Similarly we argue if $\sigma_3>0$ and there exists a nonzero $x\in (v_1^\bot\cap\dots\cap v_k^\bot)\ominus x_3^\bot$ (then we form $B=xy_3^\ast$) or a nonzero $y\in (u_1^\bot\cap\dots\cap u_k^\bot)\ominus y_3^\bot$ (then we form $B=x_3y^\ast$), etc.

    It remains to consider the case when $$v_1^\bot\cap\dots\cap v_k^\bot \subseteq x_2^\bot\cap\dots\cap x_r^\bot\quad\hbox{ and }\quad u_1^\bot\cap\dots\cap u_k^\bot \subseteq y_2^\bot\cap\dots\cap y_r^\bot$$ with $r:=\rank A$.
    By taking the orthogonal complements on both sides, these are equivalent   to 
   \begin{equation}\label{eq:span_relations}
    \begin{aligned}
        \Span\{x_2,\dots,x_r\}&\subseteq \Span \{v_1,\dots,v_k\}\\
        \Span\{y_2,\dots,y_r\}&\subseteq \Span \{u_1,\dots,u_k\}.  
    \end{aligned}
\end{equation}

We now perform a series of simplifications in the form of transformations $X\mapsto \kappa UXV^\ast$ for unitary $U,V$ and $\kappa>0$ ;  these are scalar multiples of  norm isometries so preserve Birkhoff-James orthogonality. As such, the newly obtained $\kappa UAV^\ast\in U\cV V^\ast$ will remain to be left-symmetric relative to a newly obtained  space $U\cV V^\ast$. For simplicity, and with a slight abuse of notation, we retain the original symbols. 

To start with, by applying transformation $X\mapsto \frac{1}{\sigma_1}X$  we achieve that  $\sigma_1=1$. Next, the two sets on the right of~\eqref{eq:span_relations} are at most $k\le n-1$ dimensional, so there exist unitary $U$ and $V$ such that
\begin{equation}\label{eq:Uv_i}
    \Span\{Vv_1,\dots,Vv_k\},\,\Span\{Uu_1,\dots,Uy_k\}\subseteq
\CC^{n-1}\oplus 0.
\end{equation}
Notice that $v^\ast Xu=(Vv)^\ast (VXU^\ast)(Uu)$. With this in mind, by applying the transformation $X\mapsto VXU^\ast$
we may replace $A\in\cV$ by $VAU^\ast\in V \cV U^\ast$ (and replace $u_i,v_i$ with $Uu_i$ and $Vv_i$, respectively). By a slight abuse of notation, we continue using the original symbols to achieve that \eqref{eq:Uv_i} holds with $U=V=I$.

We may further apply unitaries of the form  $U',V'\in M_{n-1}(\CC)\oplus 1$ to achieve that $U'x_i=V'y_i=e_{i-1}$ for $i=2,\dots,r$. We denote the matrix $U'A(V')^\ast$ again by $A$, the space $U'\cV (V')^\ast$ again by $\cV$ and $U'u_i, V'v_i, U'x_i,V'y_i$ again by $u_i,v_i,x_i,y_i$. Recall that $x_1$ is orthogonal to every member of $\{x_2,\dots, x_r\}=\{e_1,\dots, e_{r-1}\}$, so there further exists a unitary $U$ which fixes $e_1,\dots, e_{r-1},e_n$ and maps $x_1$ into $c_xe_{n-1}+s_x e_n$ for suitable $c_x,s_x\in\CC$ with $|c_x|^2+|s_x|^2=\|x_1\|^2=1$. By transformation $X\mapsto UX$ we achieve that $x_1=c_xe_{n-1}+s_x e_n$;  likewise, by transforming with  $X\mapsto XV^\ast$, we can achieve $y_1=c_y e_{n-1}+s_ye_n$.   This way we achieve that 
$$A=\left(\sum_{i=1}^{r-1}\sigma_{i+1}E_{ii}\right)+
c_x\overline{c_y} E_{(n-1)(n-1)}+c_x\overline{s_y} E_{(n-1)n}+ s_x\overline{c_y} E_{n(n-1)}+s_x\overline{s_y} E_{nn}$$ and that every $B$ of the form $B=xe_n^\ast +e_ny^\ast$ belongs to $\cV$ (because $u_i,v_j\in\CC^{n-1}\oplus 0$).
Therefore, if  $c:=c_x$ and $s:=s_x$ are both nonzero, one defines
\begin{align*} B_{\pm}&=\bar{c}E_{1n}-\bar{s}E_{(n-1)n}\pm\bar{s}E_{n1}+\bar{c}E_{nn}\\
&=\sqrt{1+|c|}\left(\frac{\bar{c}|c|e_1-|c|\bar{s}e_{n-1}+\bar{c}e_n}{\bar{c}\sqrt{2}}\right)\left(\frac{\pm \bar{c}se_1+(|c|^2+|c|)e_n}{\bar{c}\sqrt{2(1+|c|)}}\right)^\ast \\ 
&\hspace{0.5cm}\mbox{}+\sqrt{1-|c|} \left(\frac{-\bar{c}|c|e_1+|c|\bar{s}e_{n-1}+\bar{c}e_n}{\bar{c}\sqrt{2}}\right)\left(\frac{\pm\bar{c}s e_1+(|c|^2-|c|)e_n}{\bar{c}\sqrt{2(1-|c|)}}\right)^\ast \in\cV\end{align*}
and  checks that the right hand side is  a SVD for $B_{\pm}$.
Since $A$ achieves its norm on $y_1=c_y e_{n-1}+s_ye_n$ and maps it into $Ay_1=x_1=ce_{n-1}+s e_n$, we easily calculate
$$\langle Ay_1,B_{\pm}y_1\rangle=\langle x_1,B_{\pm}y_1\rangle=(B_{\pm}y_1)^\ast x_1=0 ,$$
so $A\perp B_{\pm}$. On the other hand, $B_{\pm}$ achieves its norm only on scalar multiples of $b_{\pm}=\frac{\pm\bar{c}s e_1+(|c|^2+|c|)e_n}{\bar{c}\sqrt{2(1+|c|)}}$ and maps it into  $B_{\pm} b_{\pm} = \sqrt{1+|c|}\,\frac{\overline{c}|c|e_1-|c|\overline{s}e_{n-1}+\overline{c}e_n}{\overline{c}\sqrt{2}}$.  Then,
$$\langle B_{\pm} b_{\pm},Ab_{\pm}\rangle=\frac{\overline{c} \overline{s} \left(\pm \sigma _2c+|s|^2
  s_y\right)}{2 | c| }$$
   since $\sigma_2 c\neq0$ we can choose $\pm$ so that the whole expression is nonzero, giving that $B_{\pm}\not\perp A$. So, $A$ is not left-symmetric relative to $\cV$. 

 It only  remains to consider the case when $c_xs_x=0$.  If $c_ys_y\neq0$ we  apply an isometry $X\mapsto X^\ast$ to $A\in\cV$. Then, $$A^\ast= \left(\sum_{i=2}^r\sigma_iE_{(i-1)(i-1)}\right)+y_1x_1^\ast$$  and we   reduced 
 to the just considered case.

 If $c_xs_x=c_ys_y=0$, then $$x_1y_1^\ast\in\CC\{E_{(n-1)(n-1)},E_{(n-1)n},E_{n(n-1)},E_{nn}\}.$$ If now $x_1y_1^\ast$ equals one of the  last three matrices, then $x_1y_1^\ast\in \cV$ and since also $A\in\cV$ we get that $\hat{B}:=A-x_1y_1^\ast =\sigma_2E_{11}+\dots+\sigma_{r}E_{(r-1)(r-1)}\in\cV$ and $A\perp\hat{B}$ but $\hat{B}\not\perp A$.

 Lastly, if $x_1y_1^\ast \in \CC E_{(n-1)(n-1)}$, then $A\in M_{n-1}(\CC)\oplus 0$, it achieves its norm on $e_{n-1}$ and maps it into itself. Here we can choose $B=E_{1n}+E_{n1}+E_{nn}\in\cV$ and by Lemma~\ref{lem:lastrow-column} again conclude  that $A\perp B\not\perp A$.
\end{proof}

\begin{lemma}\label{lem:no-full-rk}
   Let $n\ge 2$ and let $k,u_i,v_j$ and $\cV\subseteq  M_n(\CC)$ be as in Lemma~\ref{lem:V-(1<rk<n-1)}.
   If $A\in\cV$ has $\rank A>k$, then  it is not left-symmetric relative to $\cV$. Specially, there are no full-rank left-symmetric points in $\cV$.
\end{lemma}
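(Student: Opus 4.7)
My plan is to combine Lemma~\ref{lem:V-(1<rk<n-1)} with Turn\v sek's fact~\cite[Corollary~3.4]{Turnsek2017} that $M_n(\CC)$ carries no nonzero left-symmetric points. Lemma~\ref{lem:V-(1<rk<n-1)} tells us any left-symmetric $A\in\cV$ has rank $1$ or $n$, and the rank-one option is only compatible with $k=0$ (i.e.\ $\cV=M_n(\CC)$), where Turn\v sek excludes it. So the real work is the full-rank case; I would assume $A=\sum_{i=1}^n\sigma_ix_iy_i^*$ is left-symmetric of full rank and derive a contradiction.

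The first step is to rerun the rank-one perturbation strategy of Lemma~\ref{lem:V-(1<rk<n-1)}. For any $i\in\{2,\dots,n\}$ and any nonzero $x\in\bigcap_{j=1}^kv_j^\perp$ with $x\not\perp x_i$, the matrix $B:=xy_i^*$ lies in $\cV$ (indeed $v_j^*Bu_j=(v_j^*x)(y_i^*u_j)=0$); it satisfies $A\perp B$ (take $z=y_1\in M_0(A)$: since $y_1\perp y_i$, $By_1=0$, and Proposition~\ref{prop:M_0(A)} applies) yet $B\not\perp A$ (as $M_0(B)=\CC y_i$ and $\langle By_i,Ay_i\rangle=\sigma_ix_i^*x\ne 0$). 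A symmetric choice $B=x_iy^*$ with $y\in\bigcap_ju_j^\perp$ and $y\not\perp y_i$ covers the $u$-side. If both constructions fail for every admissible $x$, $y$, and $i$, then
$$\Span\{x_2,\dots,x_n\}\subseteq\Span\{v_1,\dots,v_k\}\quad\text{and}\quad\Span\{y_2,\dots,y_n\}\subseteq\Span\{u_1,\dots,u_k\};$$
since the left-hand spans are $(n-1)$-dimensional while the right-hand ones have dimension at most $k\le n-1$, this forces $k=n-1$ with equality, i.e.\ $\Span\{v_1,\dots,v_{n-1}\}=x_1^\perp$ and $\Span\{u_1,\dots,u_{n-1}\}=y_1^\perp$.

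The second step treats this exceptional case. Applying a BJ-preserving isometry $X\mapsto UXV^*$ with $U,V$ unitary chosen so that $Ux_1=Vy_1=e_n$ and so that $A$ becomes diagonal on $\CC^{n-1}\oplus 0$, reduces $A$ to the normal form $A=\diag(\sigma_2,\sigma_3,\dots,\sigma_n,\sigma_1)$ with the transformed $v_i,u_i$ still in $\CC^{n-1}\oplus 0$. Then $E_{nn}$ automatically lies in $\cV$ (as $v_i^*E_{nn}u_i=(v_i^*e_n)(e_n^*u_i)=0$), so the candidate $\hat B:=A-\sigma_1E_{nn}=\diag(\sigma_2,\dots,\sigma_n,0)$ also lies in $\cV$. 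One checks $A\perp\hat B$ via $z=e_n\in M_0(A)$ (which $\hat B$ annihilates), while $\hat B\not\perp A$ because $M_0(\hat B)\subseteq\CC^{n-1}\oplus 0$ and on this subspace both $A$ and $\hat B$ act as $\sigma_2\cdot I$, yielding $\langle\hat Bz,Az\rangle=\sigma_2^2\|z\|^2>0$ for every nonzero $z\in M_0(\hat B)$. This contradicts the assumed left-symmetry of $A$.

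I expect the main obstacle to be precisely this last construction, producing a witness $B\in\cV$ in the exceptional case $k=n-1$, $\rank A=n$, where the rank-one perturbations of the earlier step no longer land inside $\cV$ with the required one-sided orthogonality. The saving observation is that in this exceptional case the corner rank-one matrix $E_{nn}$ happens to belong to $\cV$, which permits the ``corner-corrected'' witness $\hat B:=A-\sigma_1E_{nn}$, whose norm-attaining set avoids the distinguished vector $e_n\in M_0(A)$ used to certify $A\perp\hat B$.
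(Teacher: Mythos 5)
Your proposal is correct and follows essentially the same strategy as the paper's proof: the rank-one witnesses $xy_i^\ast$ with $x\in\bigcap_j v_j^\perp$, the dimension count that isolates the extremal case, and the ``corner-corrected'' witness $A-\sigma_1x_1y_1^\ast$ (your $\hat B$, after a cosmetic unitary normalization) are all the same. The only organizational difference is that the paper runs this argument uniformly for every rank $r>k$ (the dimension count yields $r\le k+1$ directly, and $r=k+1$ is handled by the same correction), so it needs neither Lemma~\ref{lem:V-(1<rk<n-1)} nor Turn\v sek's result to dispose of the intermediate and rank-one cases.
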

\begin{proof}
Let $r:=\rank A\ge k+1$ and let 
$$A= \sigma_1 x_1y_1^\ast+\sigma_2x_2y_2^{\ast}+\dots +\sigma_{r} x_{r} y_{r}^\ast\in\cV$$
be  its SVD, so it attains its norm on $y_1$. We enlarge $\{x_1,\dots,x_r\}$ to an orthonormal basis $\{x_1,\dots,x_n\}$ of $\CC^n$.

The subspace $v_1^{\perp}\cap v_2^{\perp}\cap\dots\cap v_k^{\perp}\subseteq\CC^n$ has codimension at most $k\le n-1$ so it does contain  nonzero vectors. Pick, if possible, any such, say $v$, which is not orthogonal to  $x_i$ for some $2\le i\le r$.  Then $B_i=vy_i^{*}\in\cV$. Further, $A$ attains its norm on $y_1$ and $\langle Ay_1,B_iy_1\rangle=\langle x_1,0\rangle=0$, so $A\perp B_i$.

However, $B_i$ attains its norm only on scalar multiples of  $y_i$  and $\langle B_iy_i,Ay_i\rangle=\langle v,\sigma_i x_i\rangle\neq0$ giving $B_i\not\perp A$. Thus, $A$ is not left-symmetric.

Now, suppose  that every vector in $v_1^{\perp}\cap v_2^{\perp}\cap\dots\cap v_k^{\perp}$ is orthogonal to all $x_i$ ($2\le i\le r$). Then,
$$v_1^{\perp}\cap v_2^{\perp}\cap\dots\cap v_k^{\perp}\subseteq x_2^\bot\cap\dots\cap x_r^\bot=\Span\{x_1,x_{r+1},\dots, x_n\}.$$
The dimension of the space on the left is at least $n-k$. So we must have $n-k\le 1+( n-r)$ or $r\le k+1$. 

In the extremal case, when $r=k+1$, we have equality $$v_1^{\perp}\cap v_2^{\perp}\cap\dots\cap v_k^{\perp}=\Span\{x_1,x_{r+1},\dots,x_{n}\}$$ and then $x_1$ is orthogonal to all $v_i$ ($1\le i\le k$). This implies that $x_1y_1^*\in\cV$, and then $B:=\sigma_2x_2y_2^{\ast}+\dots +\sigma_{r} x_{r} y_{r}^\ast=A-\sigma_1x_1y_1^\ast\in\cV-\cV=\cV$ and again $A\perp B\not\perp A$, so $A$ is not relative left-symmetric. 

Indeed, the only possibility for $A\in\cV$ to be left-symmetric within $\cV$ is that its rank is at most $k\le n-1$. In particular, $A$ cannot be invertible.
\end{proof} 
   \begin{lemma}\label{lem-norank-one-LS}
    Let $n\ge 2$, $k$, $u_i,v_j$ and  $\cV\subseteq M_n(\CC)$ be as in Lemma~\ref{lem:V-(1<rk<n-1)}. Assume, in addition, that $\dim\Span\{v_1,\dots,v_k\}\ge 2$ and $\dim\Span\{u_1,\dots, u_k\}\ge2$.  Then,  $\cV$ contains no rank-one matrix $A$ which would be left-symmetric relative to $\cV$.  
\end{lemma}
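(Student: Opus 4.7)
The plan is to argue by contradiction: suppose $A=xy^*\in\cV$, with $\|x\|=\|y\|=1$, is a rank-one relative left-symmetric point, and produce $B\in\cV$ satisfying $A\perp B$ but $B\not\perp A$. As in the previous lemmas, the isometry $X\mapsto UXV^*$, which preserves BJ-orthogonality and carries $\cV$ to an analogous space with $v_i\mapsto Uv_i$, $u_i\mapsto Vu_i$ satisfying the same spanning hypotheses, normalizes $x=y=e_1$ and hence $A=E_{11}$. The relation $A\in\cV$ then reads $\overline{(v_i)_1}(u_i)_1=0$, partitioning $\{1,\dots,k\}=I\cup J$, where $I:=\{i:(v_i)_1=0\}$ and $J:=\{i:(u_i)_1=0\}$.

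The witness $B$ will be a smooth rank-two matrix
$$B=\sigma_1\xi_0\eta_0^*+\sigma_2\xi_1\eta_1^*\qquad(\sigma_1>\sigma_2>0),$$
with singular pairs in planes containing $e_1$: $\xi_0=\cos\theta\,e_1+\sin\theta\,x'$, $\xi_1=-\sin\theta\,e_1+\cos\theta\,x'$, and analogous $\eta_0,\eta_1$ using an angle $\phi$ and a unit vector $y'\in e_1^\perp$. The condition $A\perp B$, equivalent to $B_{11}=0$, reduces to the scalar equation
$$\sigma_1\cos\theta\cos\phi+\sigma_2\sin\theta\sin\phi=0,$$
which has solutions with $\theta\in(0,\pi/2)$ and $\phi\in(\pi/2,\pi)$. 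For such parameters $M_0(B)=\CC\eta_0$, and $(\eta_0)_1=\cos\phi\neq 0$, $(B\eta_0)_1=\sigma_1\cos\theta\neq 0$; thus $\langle B\eta_0,A\eta_0\rangle=\overline{(\eta_0)_1}(B\eta_0)_1\neq 0$, so $B\not\perp A$.

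It remains to choose $x',y'\in e_1^\perp$ so that $B\in\cV$. Expanding $v_i^*Bu_i$ and simplifying via $\overline{(v_i)_1}(u_i)_1=0$ yields, with $q_i:=v_i^*x'$, $s_i:=(y')^*u_i$, and certain nonzero trigonometric polynomials $\mu,\nu,\rho$ in $\sigma_i,\theta,\phi$, the three families of conditions: $q_is_i=0$ for $i\in I\cap J$; $q_i(\nu(u_i)_1+\rho s_i)=0$ for $i\in I\setminus J$; and $s_i(\mu\overline{(v_i)_1}+\rho q_i)=0$ for $i\in J\setminus I$. Each gives a binary choice, so the task reduces to a linear system on $(x',y')\in e_1^\perp\oplus e_1^\perp$. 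The na\"ive choice ``$q_i=0$ for $i\in I$, $s_i=0$ for $i\in J$'' succeeds whenever $\dim\Span\{v_i:i\in I\}\le n-2$ and $\dim\Span\{u_i:i\in J\}\le n-2$; otherwise one toggles to the second option at a single well-chosen index to reallocate a constraint between the two systems. The main obstacle is the combinatorial-dimensional argument validating this toggling: the hypothesis $\dim\Span\{v_1,\dots,v_k\}\ge 2$ and $\dim\Span\{u_1,\dots,u_k\}\ge 2$, together with $k\le n-1$ (whence $n\ge 3$), is used precisely to ensure that in every configuration a feasible reallocation exists so that the resulting inhomogeneous linear systems both admit nonzero solutions $x',y'\in e_1^\perp$, producing the desired $B$ and the contradiction.
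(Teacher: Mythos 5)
Your overall strategy is sound and close in spirit to the paper's: both reduce to $A=E_{11}$ and exhibit a rank-two witness $B$ with $\sigma_1(B)>\sigma_2(B)$ whose top singular vectors have nonzero first components, so that $A\perp B$ (via $B_{11}=0$) while $B\not\perp A$. Your reduction of $B\in\cV$ to the disjunctive conditions on $q_i=v_i^\ast x'$ and $s_i=(y')^\ast u_i$ is correct, and one can check that under the constraint $\sigma_1\cos\theta\cos\phi+\sigma_2\sin\theta\sin\phi=0$ the coefficients $\mu,\nu,\rho$ are all nonzero.

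The problem is that the decisive step is exactly the one you do not carry out. The whole content of the lemma is that the system of either/or conditions always admits nonzero $x',y'\in e_1^\perp$, and you assert this (``in every configuration a feasible reallocation exists'') while explicitly flagging it as ``the main obstacle.'' Concretely: (a) a single toggle need not suffice --- if $\Span\{v_i:i\in I\}=e_1^\perp$ then several $q_i$ are forced to be nonzero simultaneously; (b) toggling at $i\in I\setminus J$ imposes the \emph{inhomogeneous} condition $s_i=-\nu(u_i)_1/\rho$, and two such conditions at indices whose projections of $u_i$ onto $e_1^\perp$ are dependent but whose prescribed values differ are inconsistent (and $\nu=0$, $\mu=0$ are not attainable under your constraint, so you cannot escape that way); (c) you never connect the global hypotheses $\dim\Span\{v_1,\dots,v_k\}\ge2$, $\dim\Span\{u_1,\dots,u_k\}\ge2$ to the structure of $I$ and $J$. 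That this connection is essential is shown by the case $v_1=\dots=v_k=e_1$ of Lemma~\ref{lem:zadnja} (where $E_{11}$ \emph{is} relatively left-symmetric): there your scheme becomes infeasible because $q_i\equiv0$ while $\mu\neq0$, so the feasibility question is genuinely delicate. The paper avoids this combinatorial problem altogether: it normalizes the $u_i,v_i$ into nested coordinate subspaces, reorders so that $u_k,v_k\notin\CC e_1$, takes $B$ supported on the last row and column (so $B$ annihilates $u_i,v_i$ for $i<k$ automatically and only two explicit conditions remain), invokes Lemma~\ref{lem:lastrow-column}, and disposes of the degenerate case $u_k,v_k\in\CC e_n$ by a recursive descent to $M_{n-1}(\CC)\oplus 0$. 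Until you supply the feasibility argument, your proof is incomplete.
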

\begin{proof} There is nothing to do if $n=2$, so we assume $n\ge 3$. Assume otherwise that a normalized rank-one matrix  $A=xy^\ast\in\cV$ is left-symmetric relative to $\cV$. Without loss of generality, $
    \|x\|=\|y\|=1$. We will now apply a sequence of  isometries of the form $X\mapsto UXV^\ast$ and $X\mapsto X^\ast$ where $U,V$ are unitary,  on an incidence pair $A\in\cV$; the newly obtained  $A$ will clearly remain to be left-symmetric relative to  a newly obtained subspace $\cV$.

       To start, recall that by the assumptions, $u_1,\dots,u_k$ span at least two-dimensional subspace, so we can reorder them to achieve that $u_k$ and $x$ are linearly independent.
       If $x,u_1,\dots,u_k$ spans a proper $r$-dimensional  subspace, we choose unitary $U$ so that  $Ux=e_1$ and $Uu_i\in\CC^{r}\oplus 0_{n-r}$ for each $i$. Otherwise,  $x,u_1,\dots,u_k$ forms a basis,  and then we recursively choose unitaries $U_0,\dots U_k$ so that $U_0x=e_1$ and $U_i$ fixes $e_1\dots, e_i$  (hence also fixes the vectors $u_1,\dots,u_{i-1}$  obtained previously during the process) and maps $u_i$ into  vector $U_iu_i\in\CC^{i+1}\oplus 0_{n-i-1}$ for $i=1,\dots, k$. By the  abuse of notation we still denote the obtained vectors by $u_1,\dots, u_k$. Likewise we recursively change $y,v_1,\dots, v_k$. This way  we achieve that  
    \begin{equation}\label{eq:v_i-inCC^i}
    \begin{aligned}
       A&=E_{11}\\
       u_i&\in\CC^{r_u}\oplus 0_{n-r_u};\qquad &i=1,\dots, k;\qquad  &r_u:=\dim\Span\{e_1,u_1,\dots,,u_k\} \\
       u_i&\in\CC^{i+1}\oplus 0_{n-i-1};\qquad &i=1,\dots, k\qquad &\hbox{if $\Span\{e_1,u_1,\dots, u_k\}=\CC^n$}\\
       v_i&\in\CC^{r_v}\oplus 0_{n-r_v};\qquad &i=1,\dots, k;\qquad  &r_v:=\dim\Span\{e_1,v_1,\dots,v_k\}\\
       v_i&\in\CC^{i+1}\oplus 0_{n-i-1};\qquad &i=1,\dots, k\qquad &\hbox{if $\Span\{e_1,v_1,\dots, v_k\}=\CC^n$}.\\
    \end{aligned}
    \end{equation}  
  Notice also that, due to  our initial reindexation,  $u_k$ and $v_k$ are both linearly independent of $e_1$. 
We now consider three possibilities:

{\bf Case 1.} $v_k\notin \CC e_n$.  Then, since $n\ge 3$, 
there exists a nonzero scalar $\alpha$ and a vector 
\begin{equation}\label{eq:x'}
    x'\perp \{e_1,e_n\}
\end{equation}
 so that $v_k^\ast(\alpha e_1+x'+e_n)=0$ (we can always take $\alpha=1$ except if $v_k\in\CC e_1+\CC e_n$). Due to $u_k\notin\CC e_1$ there also exists a vector $$y'\perp \{e_1,e_n\}$$ with  $u_k^\ast(e_1+y')=0$.
Hence, $$B:=(\alpha e_1+x'+e_n)e_n^\ast+e_n(e_1+y')^\ast$$ belongs to $\cV$, because $u_i^\ast e_n=v_i^\ast e_n=0$ if $i\le k-1$, and clearly $v_k^\ast Bu_k=0$. Also, $A\perp B$ because $A=E_{11}$ achieves its norm on $e_1$ and $e_1^\ast Be_1=0$. However,  by Lemma~\ref{lem:lastrow-column},
$\sigma_1(B)>\sigma_2(B)$ so $B$ achieves its norm on a unique vector $z$ modulo  scalar multiples and, again by  Lemma~\ref{lem:lastrow-column}, $z$ and $Bz$ have nonzero first component. Then, $Az=E_{11}z\in\CC e_1\setminus \{0\}$ cannot be perpendicular to $Bz$, so $B\not\perp A$.

{\bf Case 2.} $v_k\in\CC e_n$, but $u_k\notin\CC e_n$. In this case there exists $y'\perp\{e_1,e_n\}$ and a nonzero $\alpha\in\CC$ so that $u_k^\ast(\alpha e_1+y'+e_n)=0$. Then, $B:=e_1e_n^\ast+ e_n(\alpha e_1+y'+e_n)^\ast$ again belongs to $\cV$ and satisfies $A\perp B\not\perp A$.

{\bf Case 3.} $u_k,v_k\in\CC e_n$. Here we consider the previous two cases for $u_{k-1}$ and $v_{k-1}$ inside $M_{n-1}(\CC)\oplus 0$. If at least one among $u_{k-1},v_{k-1}$ is not parallel with $e_{n-1}$ we can, by following the previous two cases, find $B\in M_{n-1}(\CC)\oplus 0$ of the form $B=(\alpha e_1+x'+e_{n-1})e_{n-1}^\ast+e_{n-1}(e_1+y')^\ast$, respectively,  $B=e_1e_{n-1}^\ast+e_{n-1}(\alpha e_1+y'+e_{n-1})^\ast$ (here, $x'$ satisfies \eqref{eq:x'} and in addition $x'\perp e_{n-1}$, likewise for $y'$) which again belongs to $\cV$ and satisfies $A\perp B\not\perp A$. 

However, if also $u_{k-1},v_{k-1}\in\CC e_{n-1}$, then we simply repeat the procedure inside $M_{n-2}(\CC) \oplus 0_2$. 
Eventually this will stop at the latest  at $M_2(\CC)\oplus 0_{n-2}$. If it stops before, we  always  find $B\in\cV$ with the required properties $A\perp B\not\perp A$. If it stops at $M_2(\CC)\oplus 0_{n-2}$, then  necessarily $k=n-1$ and $(u_i,v_i)=(\CC e_{i+1},\CC e_{i+1})$ for every $i=2,\dots,k$, while $u_1,v_1\in\CC^2\oplus 0_{n-2}$. In this case, we choose a nonzero $b\in\CC^2\oplus 0_{n-2}$, orthogonal to $v_1$, and form $$B=be_2^\ast+e_n(e_1+e_2)^\ast.$$ Clearly, $v_1^\ast B=0$ and $Bu_i\in B(\CC  e_{i+1})=0$ for $i=2,\dots,k$ so $B\in\cV$. Also, $e_1^\ast Be_1=0$ so $A\perp B$. Moreover, $v_1\notin\CC e_1$ so $b\perp v_1$ must have its first entry nonzero. Lastly,  let $V$ be a permutational unitary which swaps $e_2$ and $e_n$; then $BV^\ast =be_n^\ast+e_n(e_1+e_n)^\ast$ has nonzero entries at positions $(1n),(n1),(nn)$ so by  Lemma~\ref{lem:lastrow-column} it achieves its norm at a unique vector $z$ (modulo a scalar multiple); moreover, both $z$ and $BV^\ast z$  have   first component nonzero. Then, $BV^\ast\not\perp A$, so also $B=(BV^\ast)V\not\perp AV=A$. Indeed, $A$ is not left-symmetric relative to $\cV$. 
\end{proof}

 We next give the first examples of a maximal-dimensional outgoing space that does contain relative left-symmetricity. Notice that a matrix $X$ has zero at position $(1i)$ if and only if $e_1^\ast Xe_i=0$; thus the spaces $\cV_p$ which appear in Lemma below are specializations of spaces  from Lemma~\ref{lem:V-(1<rk<n-1)}.
\begin{lemma}\label{lem:zadnja}
    Let $n\ge 2$ and let $\cV_p\subseteq M_n(\CC)$ be a subspace of all matrices  with zeros at positions $(1p),\dots,(1n)$. If $p\ge 3$, then $\cV$ contains no  nonzero left-symmetric points. If $p=2$, then there exist nonzero left-symmetric points in $\cV$ and all are of rank-one. In addition, if $n\ge 3$, then $A\in\cV_p$ is left-symmetric if and only if 
    $A\in\CC E_{11}$.     
\end{lemma}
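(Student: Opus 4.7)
My plan is to begin by reducing to rank one. Set $k:=n-p+1$; since $p\ge 2$ we have $k\le n-1$. With $v_i:=e_1$ and $u_i:=e_{p-1+i}$ for $i=1,\ldots,k$, the subspace $\cV_p$ falls under Lemma~\ref{lem:V-(1<rk<n-1)}, and combined with Lemma~\ref{lem:no-full-rk}, any nonzero point of relative left-symmetry $A\in\cV_p$ must have $\rank A = 1$. Write $A=xy^*$ with $\|x\|=\|y\|=1$; then $A\in\cV_p$ amounts to $x_1\overline{y_j}=0$ for every $j\ge p$, leading to either \textbf{(I)} $x_1=0$, or \textbf{(II)} $x_1\ne 0$ and $y\in\Span\{e_1,\ldots,e_{p-1}\}$.

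Next I identify the BJ-isometries preserving $\cV_p$. A direct check on first rows shows that $X\mapsto UXV^*$ preserves $\cV_p$ whenever $U=u_{11}\oplus\tilde U$ (with $u_{11}\in\T$ and $\tilde U\in U(n-1)$ acting on $\Span\{e_2,\ldots,e_n\}$) and $V=V_1\oplus V_2$ (with $V_1\in U(p-1)$ on $\Span\{e_1,\ldots,e_{p-1}\}$ and $V_2\in U(n-p+1)$ on $\Span\{e_p,\ldots,e_n\}$). Such maps preserve BJ-orthogonality and so the relative left-symmetry set is invariant. Using them I may normalize: in Case~II to $y=e_1$ and $x=\cos\theta\,e_1+\sin\theta\,e_2$ for some $\theta\in[0,\pi/2)$; in Case~I to $x=e_2$ and $y=c_1 e_1+c_p e_p$ with $c_1,c_p\ge 0$ and $c_1^2+c_p^2=1$.

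For the positive direction, take $A=cE_{11}$ (Case~II with $\theta=0$) and $p=2$. Any $B\in A^\bot\cap\cV_2$ has $B_{11}=0$ and $B_{1j}=0$ for $j\ge 2$, so the whole first row of $B$ vanishes. Consequently $(Bz)_1=0$ for every $z\in\CC^n$, and thus $\langle Bz,Az\rangle=\overline{c}\,\overline{z_1}(Bz)_1=0$ at every $z\in M_0(B)$, i.e., $B\perp A$ automatically. Hence every element of $\CC E_{11}$ is relatively left-symmetric in $\cV_2$.

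For the negative direction, in each remaining normalized form I would exhibit an explicit smooth rank-two $B\in\cV_p$ whose unique (up to scalar) norming vector $z_0$ makes $(y^*z_0)(x^*Bz_0)\ne 0$; by Proposition~\ref{prop:M_0(A)} this yields $A\perp B$ while $B\not\perp A$. Typical templates are $B=E_{12}+E_{21}+E_{22}$ for $A=E_{11}$ with $p\ge 3$ (the norming direction is $e_1+\varphi e_2$ with $\varphi=(1+\sqrt{5})/2$, so both $(z_0)_1$ and $(Bz_0)_1$ are nonzero); $B=\sin\theta\,E_{11}-\cos\theta\,E_{21}+\lambda(E_{23}+E_{32})$ for $A=\cos\theta\,E_{11}+\sin\theta\,E_{21}$ with $\sin\theta\ne 0$ and suitable $\lambda>0$, whose top singular value of $B^*B$ is simple and whose norming eigenvector has nonzero components in $e_1$ and $e_3$; and $B=E_{11}+E_{22}+E_{31}+E_{32}$ for $A=E_{21}$ in Case~I, with analogous two-column perturbations in the remaining Case~I subcases. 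The main obstacle is that Lemma~\ref{lem-norank-one-LS} cannot be invoked, because $\Span\{v_i\}=\CC e_1$ is only one-dimensional; so the witnesses $B$ must be constructed ad hoc in each canonical case, and the delicate dichotomy between $p=2$ (where $\CC E_{11}$ is genuinely left-symmetric) and $p\ge 3$ (where even $E_{11}$ fails) reflects precisely whether $A^\bot\cap\cV_p$ admits matrices with nonzero $(1,2)$-entry to serve as a perturbation.
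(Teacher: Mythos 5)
Your reduction to rank one is correct and is actually a nice shortcut: writing $\cV_p$ as $\{X;\ e_1^\ast Xe_{p}=\dots=e_1^\ast Xe_n=0\}$ with $k=n-p+1\le n-1$ and combining Lemma~\ref{lem:V-(1<rk<n-1)} (rank in $\{1,n\}$) with Lemma~\ref{lem:no-full-rk} (rank at most $k$) immediately forces $\rank A=1$, whereas the paper argues this directly from the SVD of $A$ by producing the witness $x_2'y_2^\ast$ or $x_2y_2^\ast$. Your normalization via the isometries $U=u_{11}\oplus\tilde U$, $V=V_1\oplus V_2$ preserving $\cV_p$ is sound, and your positive direction for $p=2$ (the first row of any $B\in E_{11}^\bot\cap\cV_2$ vanishes, so $\Image B\perp e_1$ and $B\perp E_{11}$) is exactly the paper's argument.

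The negative direction, however, is where the proposal has a genuine gap: it is stated as a program (``I would exhibit an explicit smooth rank-two $B$\dots'') with three sample witnesses, and the remaining subcases are only asserted to be ``analogous.'' Concretely, Case~I with $y=c_1e_1+c_pe_p$ and $c_1,c_p$ both nonzero, and the subcase $c_1=0$ (e.g.\ $A=E_{22}$ when $p=2$, or $A=E_{2p}$ when $p\ge3$), are never handled; each would require its own verification that the chosen $B$ lies in $\cV_p$, that its top singular value is simple, and that its norming vector is not annihilated by the relevant inner product — none of which is automatic. The templates you do give check out (I verified the $\varphi$-eigenvector computation and the $\{1,3\}$-block of $B^\ast B$ in your second template), but a complete proof must close all cases. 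The paper avoids this case-by-case witness hunt with a uniform device you should consider adopting: write $x=\alpha e_1+x'$ with $x'\perp e_1$; if $x'\ne0$, the compression $x'y^\ast$ lies in the zero-first-row subspace, which after a right unitary sits inside a copy of $M_{n-1}(\CC)$ having no nonzero left-symmetric points by~\cite[Corollary~3.4]{Turnsek2017}, and the resulting witness $B$ (having zero first row) also defeats $A$ itself because $\langle Bz,Az\rangle=\overline{\sigma_1(y^\ast z)}\,(x')^\ast Bz$ is unaffected by the $\alpha e_1$ component; if $x'=0$ then $A\in M_{p-1}(\CC)\oplus0$ and the same corollary kills the case $p\ge3$. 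This single argument subsumes your entire Case~I and the $\theta\ne0$ part of Case~II at once.
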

\begin{remark}\label{rem:} 
If $n=2$, then, with additional arguments, one can show that the set of relative left-symmetric points  in $\cV_2=\left(\begin{smallmatrix}
        \CC &0\\
        \CC&\CC
\end{smallmatrix}\right)\subseteq M_2(\CC)$ equals $\CC E_{11}\cup \CC E_{21}\cup \CC E_{22}$.
\end{remark}
  \begin{proof}[Proof of Lemma \ref{lem:zadnja}]
 Let a nonzero $A\in\cV_p$, with SVD 
  $$A=\sigma_1 x_1y_1^\ast+\sigma_2 x_2y_2^\ast+\dots +\sigma_n x_ny_n^\ast;\qquad \sigma_1\ge\dots\ge \sigma_n\ge0$$ be left-symmetric. Clearly $A$ attains its norm on $y_1$ and maps it into $Ay_1=\sigma_1 x_1$.
  
  Assume first  $\rank A\ge 2$.  If  $x_2'$, the projection of $x_2$ to $\mathrm{span}\{e_2,\dots,e_n\}$ is nonzero, then $B:=x_2'y_2^\ast\in\cV_p$ satisfies $A\perp B\not\perp A$, a contradiction. If $x_2'=0$, i.e., if $x_2\in\CC e_1$, then $x_1,x_3,\dots, x_n$ are orthogonal to $e_1$ and hence $\sum_{i\neq2}\sigma_i x_iy_i^\ast\in\cV_p$. Then also $B:=x_2y_2^\ast=A-\sum_{i\neq2}\sigma_i x_iy_i^\ast\in\cV_p-\cV_p=\cV_p$ which again satisfies $A\perp B\not\perp A$, a contradiction. 
  
  Thus, $\rank A= 1$ and $A=\sigma_1x_1y_1^\ast \in\cV_p$.
  We proceed under the assumption that $n\ge 3$. Decompose
  $$x_1=\alpha e_1+x_1';\qquad x_1'\perp e_1$$ 
  and assume next that $x_1'$ is nonzero. 

  Then, the compression $A'=\sigma_1 x_1'y_1^\ast$ belongs 
  to a subspace $\cV_p':=\{X;\;\;e_1^\ast X=0\}\subseteq\cV_p$ consisting of matrices with vanishing first row. Also, every  isometry of the form $X\mapsto X V^\ast$ where $V$ is unitary, leaves $\cV_p'$ invariant. We can choose $V$ so that $A'V^\ast\in\cV_p''=\{X;\;\;e_1^\ast X=Xe_1=0\}$, which is isometric to $M_{n-1}(\CC)$, and consequently such contains no left-symmetric points (see~\cite[Corollary~3.4]{Turnsek2017}). Therefore, there exists $BV\in\cV_p''$ with $A'V\perp BV\not\perp A'V$. Notice that the image, $\Image B$, is orthogonal to $e_1$ and notice that $A$ and $A'$ attain their norm on the same vector $y_1$. Then, $A'\perp B\not\perp A'$ implies $A\perp B\not\perp A$, a contradiction.

  Thus, $x_1'=0$ and $x_1=\alpha e_1$. If now $p\ge 3$, then $A=\sigma_1(\alpha e_1)y_1^\ast $ belongs to a subspace $M_{p-1}(\CC)\oplus0\subseteq\cV_p$, which contains no left-symmetric points (see again~\cite[Corrolary~3.4]{Turnsek2017}).

  Assume  finally $p=2$.
     Clearly, $A=E_{11}\in\cV_p$ achieves its norm only on $e_1$, so if $A\perp B$ for some $B\in\cV_p$, then $0=\langle Ae_1,Be_1\rangle=\langle e_1, Be_1\rangle$. Together with $B\in\cV_p$ this implies that $B$ vanishes in the first row. So, $\Image B$ is orthogonal to $\CC e_1=\Image A$ and hence $B\perp A$. This shows that $E_{11}$ is left-symmetric, hence, so is $\CC  E_{11}$, by homogeneouity of BJ orthogonality.
\end{proof}

\begin{corollary}
    Let $n\ge 2$ and let $A_1,\dots,A_k\in M_n(\CC)$ be smooth points. If $\cV:=A_1^\bot\cap\dots\cap A_k^\bot$ contains a nonzero relative left-symmetric point, then $k\ge n-1$. If $k=n-1$, then  there exist $n-1$ linearly independent vectors $v_1,\dots, v_{n-1}$ and a nonzero vector $u$ such that 
    $$A_i^\bot= (uv_i^\ast)^\bot;\qquad i=1,\dots, n-1$$
    or 
    $$A_i^\bot= (v_iu^\ast)^\bot;\qquad i=1,\dots, n-1.$$ 
    Also,  in this case, if $n\ge 3$, and $v_n\in\Span\{v_1,\dots,v_{n-1}\}^\bot$, then the set of relative left-symmetric points in $\cV$ equals $\CC A$ for  $A=uv_n^\ast\in\cV$ or $A=v_nu^\ast\in\cV$, respectively. Same holds if $n=2$, but we have additional left-symmetric matrices, all of rank-one.
\end{corollary}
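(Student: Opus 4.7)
The plan is to combine the three structural lemmas of this section via the observation that smoothness of each $A_i$ lets us rewrite $A_i^\bot = (\tilde{x}_i \tilde{y}_i^\ast)^\bot$ using~\eqref{eq:smooth-rank-one}. Then by~\eqref{eq:rk-1-perp} the intersection $\cV$ takes the form
\[\cV = \{X \in M_n(\CC) : \tilde{x}_i^\ast X \tilde{y}_i = 0,\; i = 1,\dots,k\},\]
which is precisely the setting of Lemma~\ref{lem:V-(1<rk<n-1)} with $v_i := \tilde{x}_i$, $u_i := \tilde{y}_i$.

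First I would show that any nonzero relative left-symmetric $A \in \cV$ must be rank-one: by Lemma~\ref{lem:V-(1<rk<n-1)} its rank is $1$ or $n$, but Lemma~\ref{lem:no-full-rk} excludes full rank. Lemma~\ref{lem-norank-one-LS} then forces $\dim\Span\{\tilde{x}_1,\dots,\tilde{x}_k\} = 1$ or $\dim\Span\{\tilde{y}_1,\dots,\tilde{y}_k\} = 1$. These two possibilities correspond to the two forms $A_i^\bot = (uv_i^\ast)^\bot$ and $A_i^\bot = (v_i u^\ast)^\bot$ in the conclusion, and they are swapped by the isometry $X \mapsto X^\ast$ (which interchanges the rank-one generators $\tilde{x}_i\tilde{y}_i^\ast$ and $\tilde{y}_i\tilde{x}_i^\ast$); I therefore treat only the first case, so $\tilde{x}_i = \alpha_i u$ for a common unit vector $u$ and nonzero scalars $\alpha_i$, and the defining relations collapse to $u^\ast X w = 0$ for every $w \in W := \Span\{\tilde{y}_1,\dots,\tilde{y}_k\}$.

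Next I would rotate by unitaries $U, V$ (two-sided multiplication is an isometry preserving BJ orthogonality) to send $u$ to $e_1$ and a basis of $W$ to $e_{n-r+1},\dots,e_n$, where $r := \dim W \leq k$. Then $\cV$ becomes exactly the space $\cV_p$ of Lemma~\ref{lem:zadnja} with $p = n - r + 1$, and Lemma~\ref{lem:zadnja} forces $p \leq 2$, i.e., $r \geq n - 1$. Combined with $r \leq k$ this gives $k \geq n - 1$. When $k = n-1$ we must have $r = n - 1$, so $\tilde{y}_1,\dots,\tilde{y}_{n-1}$ are linearly independent, and setting $v_i := \tilde{y}_i$ produces $A_i^\bot = (u v_i^\ast)^\bot$.

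Finally, for the explicit description of the relative left-symmetric set when $k = n-1$ and $n \geq 3$, I would additionally choose $V$ so that $V\Span\{v_1,\dots,v_{n-1}\} = \Span\{e_2,\dots,e_n\}$; since $v_n$ is a nonzero vector in the one-dimensional $\Span\{v_1,\dots,v_{n-1}\}^\bot$, necessarily $Vv_n \in \CC e_1$. With $Uu = e_1$ the space $U\cV V^\ast$ becomes $\cV_2$, whose relative left-symmetric set is $\CC E_{11}$ by Lemma~\ref{lem:zadnja}; pulling back by $X \mapsto U^\ast X V$ yields $\CC(u v_n^\ast)$. The $n = 2$ case is identical, except that Lemma~\ref{lem:zadnja} is replaced by Remark~\ref{rem:}, which supplies the additional rank-one relative left-symmetric matrices. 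The main obstacle I anticipate is the bookkeeping needed to keep three parameterizations in sync---the $(\tilde{x}_i, \tilde{y}_i)$ from smoothness, the $(v_i, u_i)$ of Lemma~\ref{lem:V-(1<rk<n-1)}, and the final $(u, v_i)$ of the Corollary---and to correctly trace the rank-one left-symmetric point $E_{11}$ back to the factorization $u v_n^\ast$ after undoing the unitary normalization.
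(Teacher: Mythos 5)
Your proposal is correct and follows essentially the same route as the paper's proof: rewrite each $A_i^\bot$ as $(\tilde x_i\tilde y_i^\ast)^\bot$ via \eqref{eq:smooth-rank-one}, use Lemmata~\ref{lem:V-(1<rk<n-1)}--\ref{lem-norank-one-LS} to force any relative left-symmetric point to be rank-one with one of the two spans one-dimensional, normalize by $X\mapsto X^\ast$ and two-sided unitaries to land in a space $\cV_p$, and invoke Lemma~\ref{lem:zadnja} (and Remark~\ref{rem:} for $n=2$) to get $p\le 2$, hence $k\ge n-1$, and the description $\CC(uv_n^\ast)$ after pulling $E_{11}$ back through the unitaries. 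The only difference is cosmetic: you make the dimension count $p=n-r+1$ and the pullback of $E_{11}$ to $uv_n^\ast$ more explicit than the paper does.
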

\begin{proof}
By \eqref{eq:rk-1-perp} and \eqref{eq:smooth-rank-one} there exist rank-one $u_iv_i^\ast$ with $$A_i^\bot=(u_iv_i^\ast)^\bot=\{X;\;\;u_i^\ast Xv_i=0\}.$$
It implies that $\cV$ is the space from Lemma~\ref{lem:V-(1<rk<n-1)}. Assume $k\le n-1$. By lemmata~\ref{lem:V-(1<rk<n-1)}---\ref{lem:no-full-rk}, $\cV$ contains no relative left-symmetric matrices of rank bigger than one. By Lemma~\ref{lem-norank-one-LS} it also contains no rank-one unless one of $\Span\{u_1,\dots,u_k\}$, $\Span\{v_1,\dots,v_k\}$ is one-dimensional. 

By applying, if needed, an isometry  $X\mapsto X^\ast$ on an incidence pair $A\in\cV$ we can achieve that $\dim\Span\{u_1,\dots,u_k\}=1$. By further applying unitary similarity $X\mapsto VXU^\ast$ on $A\in\cV$ and transferring appropriate scalars to $v_i$, we can assume $u_1=\dots=u_k=e_1$ and $\Span\{v_1,\dots,v_k\}=0_{p-1}\oplus\CC^{n-p-1}$ for some $p\le k$. Notice from~\eqref{eq:rk-1-perp} that then
$$\cV=\{X;\;\;e_1^\ast Xv_1=\dots=e_1^\ast Xv_k=0\}=\{X;\;\;e_1^\ast Xe_{p}=\dots=e_1^\ast Xe_n=0\}.$$
However, such $\cV$ is a space~$\cV_p$ from  Lemma~\ref{lem:zadnja}, by which a nonzero $A\in\cV_p$ is relative left-symmetric if and only if $p\le 2$. This implies that $k=n-1$. By the same lemma, if $n\geq3$, the only relative left-symmetric points in $\cV_p$ are $\CC E_{11}$.
\end{proof}
We conclude with a general lemma that describes the linear dependence of two matrices using BJ orthogonality.  Define
$${\mathcal R}_1=\{A\in M_n(\CC);\;\; \rank\,A=1\}.$$
\begin{lemma}\label{lem:A=B}
    Let $n\ge 2$ and let $A, B\in M_n(\CC)$.
    Then the following two statements are equivalent. 
    \begin{itemize}
        \item[(i)] $\CC A=\CC B$.
        \item[(ii)] $({}^\bot\! A)\cap\mathcal{R}_1=({}^\bot\! B)\cap\mathcal{R}_1$.
    \end{itemize}
\end{lemma}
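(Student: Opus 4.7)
The implication (i)$\Rightarrow$(ii) is immediate from the definition of Birkhoff-James orthogonality, since the relation $Y\perp A$ depends on $A$ only through the line $\CC A$. So the content lies in (ii)$\Rightarrow$(i).

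The plan is to use the rank-one characterization of BJ orthogonality to convert the hypothesis into a statement about sesquilinear forms, and then extract proportionality by a short linear-algebraic argument. By \eqref{eq:rk-1-perp}, $A\in (xy^\ast)^\perp$ iff $x^\ast A y=0$; equivalently, for nonzero $x,y$, the rank-one matrix $xy^\ast$ belongs to ${}^\bot\! A$ iff $x^\ast A y=0$. Consequently, hypothesis (ii) reads exactly that the two sesquilinear forms $(x,y)\mapsto x^\ast A y$ and $(x,y)\mapsto x^\ast B y$ on $\CC^n\times\CC^n$ share the same zero set.

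First I would dispose of $A=0$, in which case the shared zero set forces $x^\ast B y=0$ for all $x,y$, hence $B=0$. For $A\neq 0$, I fix $y\in\CC^n$ with $Ay\neq 0$; then $x\mapsto x^\ast A y$ is a nonzero conjugate-linear functional with kernel $(Ay)^\perp$, and by hypothesis $x\mapsto x^\ast B y$ has the same kernel. Since two conjugate-linear functionals with the same kernel are proportional, there is a scalar $\alpha(y)$ with $By=\alpha(y)Ay$. When $Ay=0$, the hypothesis yields $x^\ast B y=0$ for every $x$, hence $By=0$; in particular $\Ker A\subseteq \Ker B$.

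The remaining step is to show that $\alpha(y)$ is constant on $\{y:Ay\neq 0\}$. Given $y_1,y_2$ there: if $Ay_1,Ay_2$ are linearly dependent, the equality of the scalars follows at once from $\Ker A\subseteq \Ker B$; if they are linearly independent, one considers $y_1+y_2$ and matches coefficients in
$$\alpha(y_1+y_2)(Ay_1+Ay_2)=B(y_1+y_2)=\alpha(y_1)Ay_1+\alpha(y_2)Ay_2,$$
forcing $\alpha(y_1)=\alpha(y_2)$. Hence $B=\alpha A$ with $\alpha\neq 0$ (otherwise $B=0\neq A$ would yield $({}^\bot B)\cap \mathcal R_1=\mathcal R_1$, strictly larger than $({}^\bot A)\cap \mathcal R_1$), so $\CC A=\CC B$. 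The whole argument is elementary once \eqref{eq:rk-1-perp} is in hand; no substantive obstacle arises.
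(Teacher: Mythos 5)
Your proof is correct and follows essentially the same route as the paper: both use \eqref{eq:rk-1-perp} to translate hypothesis (ii) into the statement that the sesquilinear forms $(x,y)\mapsto x^\ast Ay$ and $(x,y)\mapsto x^\ast By$ have the same zero set, and both then extract pointwise proportionality $By=\alpha(y)Ay$ together with constancy of $\alpha$. The only difference is one of packaging: the paper splits by rank, citing a known local-linear-dependence lemma for rank at least two and treating the rank-one case separately, whereas you carry out the (standard) coefficient-matching argument directly and uniformly for all ranks, which is a perfectly valid and in fact slightly cleaner execution.
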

\begin{proof}
    We only prove the nontrivial part. Notice that rank-one $R=xy^\ast $ belongs to ${}^\bot X$ if and only if  $R\perp X$, and since $R$ attains its norm only on scalar multiples of $y$, this is further equivalent to $x^\ast Xy=0$ (c.f.~\eqref{eq:rk-1-perp}). 
    
    The condition (ii) is, hence,  equivalent to $x^\ast Ay=0\Longleftrightarrow x^\ast By=0$ for every $x,y\in\CC^n$. It is easily seen that this is further  equivalent to the fact that $Ay$ and $By$ are linearly dependent vectors for every $y$, that is, $A,B$ are locally dependent matrices. Now, if  $\rank(A)\ge 2$ or if $\rank(B)\ge 2$, then  it is well known, and easy to see that  (i) holds (c.f.~\cite[Lemma~2.2]{Hou1989}).
    
    If $A=uv^\ast$ and $B=zw^\ast$ are both of rank-one, then, there exist $x$ not perpendicular to $v$ nor to $w$. So, $Ax$ and $Bx$ are both nonzero. Being locally dependent, implies, after transferring the appropriate scalars to the second factor, that $u=z$. 
    If $v,w$ are linearly independent, there would exist $y$ with $v^\ast y\neq0$ but $w^\ast y=0$, and hence $uy^\ast$ would  be rank-one in ${}^\bot\! A$ but not in ${}^\bot\! B$, a contradiction. So, $u=z$ and $\CC v=\CC w$.
    
    Finally, $A=0$ if and only if ${}^\bot\!A\cap{\mathcal R}_1={\mathcal R}_1$, so (ii) implies $B=0$.
\end{proof}

\section{BJ isomorphisms on \texorpdfstring{$M_n(\CC),n\geq3$}{TEXT}}
Within this and the next section, we will classify BJ isomorphisms $\Phi\colon M_n(\CC)\to M_n(\CC)$.
\begin{lemma}\label{lem:0classifion}
We have $A=0$ if and only if  $A\perp A$.
\end{lemma}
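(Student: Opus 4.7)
The statement is essentially immediate from the definition of Birkhoff-James orthogonality, so my plan is a one-line unpacking in each direction.

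If $A=0$, then for every scalar $\lambda$ we have $\|A+\lambda A\|=0=\|A\|$, so $A\perp A$ trivially.

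Conversely, suppose $A\perp A$. By definition this means $\|A+\lambda A\|\ge \|A\|$ for every $\lambda\in\CC$. Specializing to $\lambda=-1$ gives $0=\|A-A\|\ge \|A\|$, forcing $\|A\|=0$ and hence $A=0$.

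There is no real obstacle here; the only thing to be careful about is that the scalar field in the definition of BJ orthogonality is $\CC$ (so that $\lambda=-1$ is admissible), which is the setting of the paper. The lemma will presumably be used later to recover $\Phi^{-1}(0)=0$ from the fact that $\Phi$ strongly preserves BJ orthogonality.
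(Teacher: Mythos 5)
Your proof is correct and is precisely the ``trivial verification'' the paper invokes: the forward direction is immediate and the converse follows by taking $\lambda=-1$ in the definition of Birkhoff--James orthogonality. Nothing further is needed.
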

\begin{proof}
Trivial verification.    
\end{proof}
It follows easily that every (possibly nonbijective) map between two normed spaces, which strongly preserves BJ orthogonality, maps $0$ into $0$ and maps nonzero elements to nonzero elements. We will constantly rely on this fact.

We now characterize rank-one matrices in terms of BJ orthogonality alone:
\begin{lemma}\label{lem:rk-1classificatin} Let $n\ge 2$. The following are equivalent for a nonzero $A\in M_n(\CC)$.
\begin{itemize}
\item[(\itshape i)] $\rank A=1$.
\item[(\itshape ii)] There exist $n-1$ smooth elements $A_2,\dots, A_n$ such that $A$ is left-symmetric relative to $\bigcap\limits_{i=2}^n A_i^\bot$.
\end{itemize}
\end{lemma}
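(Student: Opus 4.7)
The proof should split into two directions and reduce directly to the preceding results, chiefly the Corollary and Lemmas~\ref{lem:V-(1<rk<n-1)}--\ref{lem:no-full-rk}.

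For (i)$\Rightarrow$(ii), given a nonzero rank-one $A = xy^\ast$, I would pick any orthonormal basis $v_2,\dots,v_n$ of $y^\bot$ and set $A_i := xv_i^\ast$ for $i = 2,\dots,n$. Each $A_i$ is rank-one, hence smooth (since $\dim M_0(xv_i^\ast) = \dim \CC v_i = 1$), and by~\eqref{eq:rk-1-perp} we have $A_i^\bot = \{X : x^\ast X v_i = 0\}$. Thus $\cV := \bigcap_{i=2}^n A_i^\bot$ is exactly the subspace that appears in the preceding Corollary, with common $u = x$ and the ``extra'' unit vector $y/\|y\| \in \Span\{v_2,\dots,v_n\}^\bot$, so that Corollary places $A \in \CC (xy^\ast)$ among the relative left-symmetric points of $\cV$. (A direct verification is equally short: Proposition~\ref{prop:M_0(A)} together with $M_0(A) = \CC y$ shows that $B \in \cV$ with $A \perp B$ forces $x^\ast B y = 0$, and combined with $x^\ast B v_i = 0$ this gives $x^\ast B = 0$; then for any $z \in M_0(B)$, $Az \in \CC x$ is perpendicular to $Bz$, yielding $B \perp A$.)

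For (ii)$\Rightarrow$(i), I use the same dictionary in reverse. Each smooth $A_i$ gives, via~\eqref{eq:smooth-rank-one}, vectors $u_i, v_i$ with $A_i^\bot = \{X : u_i^\ast X v_i = 0\}$, so $\cV$ falls into the setup of Lemmas~\ref{lem:V-(1<rk<n-1)} and~\ref{lem:no-full-rk} with $k = n-1$. Lemma~\ref{lem:no-full-rk} excludes $\rank A > k = n-1$, hence $\rank A = n$, while Lemma~\ref{lem:V-(1<rk<n-1)} excludes all ranks strictly between $1$ and $n$ for a relative left-symmetric element. Since $A \neq 0$, the only remaining possibility is $\rank A = 1$.

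I do not expect a real obstacle, since the preceding section does all the heavy lifting and this statement is essentially a bookkeeping application. The only mild care required is to match notation (the $u_i$'s in the Corollary are permitted to differ, whereas my construction in (i)$\Rightarrow$(ii) has $u_i \equiv x$, which is the one-dimensional-span case from Lemma~\ref{lem-norank-one-LS}) and to confirm that the $n = 2$ case is covered by Lemma~\ref{lem:no-full-rk} with $k = 1$, despite Lemma~\ref{lem:V-(1<rk<n-1)} being vacuous there.
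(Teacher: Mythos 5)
Your proof is correct. The forward direction is essentially the paper's argument: both constructions take $n-1$ rank-one smooth elements sharing one factor with $A$, so that $\bigcap A_i^\bot$ becomes, after a unitary transformation, the space $\cV_2$ of Lemma~\ref{lem:zadnja} with $A$ landing on $\CC E_{11}$. One caution: the final assertion of the Corollary is stated under the standing hypothesis that $\cV$ \emph{already} contains a nonzero relative left-symmetric point, so quoting it by itself to produce one would be circular; your parenthetical direct verification (or an explicit appeal to Lemma~\ref{lem:zadnja} with $p=2$, which is what the paper does) is therefore not optional but is exactly what closes that loop, and it is complete as written. In the converse direction you are genuinely more economical than the paper: after replacing each smooth $A_i$ by a rank-one matrix with the same outgoing neighbourhood via~\eqref{eq:smooth-rank-one}, you kill all ranks $\ge 2$ using only Lemma~\ref{lem:V-(1<rk<n-1)} (ranks $2,\dots,n-1$) and Lemma~\ref{lem:no-full-rk} with $k=n-1$ (rank $n$), whereas the paper first splits according to whether one of $\Span\{u_i\}$, $\Span\{v_i\}$ is one-dimensional, invokes Lemma~\ref{lem-norank-one-LS}, and routes the surviving case back through Lemma~\ref{lem:zadnja}. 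Your shortcut is valid, loses nothing needed for the statement (the paper's detour only adds the incidental information that $p=2$), and your remark that Lemma~\ref{lem:no-full-rk} alone handles $n=2$, where Lemma~\ref{lem:V-(1<rk<n-1)} is vacuous, is exactly the right check.
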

\begin{proof}
    ({\itshape i}) $\Longrightarrow$ ({\itshape ii}). Write $A=xy^\ast$, enlarge $x_1:=x$ with $x_2,\dots,x_n$ to orthogonal basis of $\CC^n$ and define $A_i:=x_iy^\ast,2\leq i\leq n$. Note that $A_2,\dots,A_{n}$ are all smooth (see text after Definition \ref{def}). By applying BJ isomorphism, induced by a linear isometry $X\mapsto \kappa UXV^\ast$ for  suitably chosen unitaries $U,V$ and scalar $\kappa>0$ we can assume that $A=E_{11}$ and  $A_i=E_{1i}$ for all $2\le i\le n$. By \eqref{eq:rk-1-perp}, $A_i^\bot=\{X;\;\;e_1^\ast Xe_i=0\}$. Then,  $A=E_{11}\in\bigcap\limits_{i=2}^n E_{1i}^\bot=\bigcap\limits_{i=2}^n A_i^\bot=\{X;\;\;e_1^\ast Xe_2=\dots=e_1^\ast Xe_n=0\}$ and, by Lemma~\ref{lem:zadnja} (and its Remark~\ref{rem:} if $n=2$), it is left-symmetric relative to $\bigcap\limits_{i=2}^n A_i^\bot$. 

    ({\itshape ii}) $\Longrightarrow$ ({\itshape i}) By  \eqref{eq:smooth-rank-one}, for every smooth element $X$, there exists rank-one $R=xy^\ast$ such that $R^\bot=X^\bot$. Hence, we can assume that every smooth $A_i$ is of  rank-one and write it as  $$A_i=u_iv_i^{\ast}.$$ From lemmata~\ref{lem:V-(1<rk<n-1)}--\ref{lem-norank-one-LS} we know that there is no nonzero left-symmetric point inside $\bigcap\limits_{i=2}^n A_i^\bot$ if both $\Span\{u_2,\dots,u_n\}$ and $\Span\{v_2,\dots,v_n\}$ are at least two-dimensional. By applying BJ isomorphism induced by conjugate linear isometry $X\mapsto X^{\ast}$ and absorbing a appropriate scalars to $u_i$'s, we can assume that $v_i=v$ for all  $2\le i\le n$. By applying a linear isometry $X\mapsto UXV^\ast$, we can achieve $v=e_1$ and $\Span\{u_2,\dots,u_n\}=\Span\{e_p,\dots,e_n\}$ for some $p\ge 2$ (actually, $p=n-\dim\Span\{u_2,\dots,u_n\}-1$). It is easy to see by \eqref{eq:rk-1-perp} that then $\bigcap\limits_{i=2}^n A_i^\bot$ coincides with the space of all matrices which vanish at positions $(1p),\dots,(1n)$. Since we assumed that $A$ is left-symmetric in this intersection, by Lemma~\ref{lem:zadnja} (and its Remark~\ref{rem:} if $n=2$) $\rank A=1$ (and $p=2$).
\end{proof}

We apply the previous two lemmata on bijective maps which strongly preserve BJ orthogonality.  Recall that
$${\mathcal R}_1=\{A\in M_n(\CC);\;\; \rank\,A=1\}$$
and recall  that matrices $A,B$ are adjacent if $A-B\in {\mathcal R}_1$, that is, if $\rank(A-B)=1$. Notice that if $A=xy^\ast$ and $B=uv^\ast$ are both of rank-one, then $\rank(A-B)\le1$ if and only if $x\in \CC u$ or $y\in\CC v$ (or both).
\begin{corollary}\label{cor:Phi-preserves-adjacency} Let $n\ge 2$.
    If  $\Phi\colon M_n(\CC)\to M_n(\CC)$ is a BJ isomorphism, then $\Phi({\mathcal R}_1)={\mathcal R}_1$. Moreover, $\Phi|_{{\mathcal R}_1}$ maps linearly dependent pairs to linearly dependent pairs and, if $n\ge 3$, strongly preserves adjacency. 
\end{corollary}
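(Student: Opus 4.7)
The plan rests on the fact that a BJ isomorphism $\Phi$ transports every property expressible purely in terms of BJ orthogonality. Concretely, $\Phi(X^\perp)=\Phi(X)^\perp$ and $\Phi({}^\perp X)={}^\perp\Phi(X)$ for every $X$, so $\Phi$ preserves smoothness (Definition~\ref{def}) and the property of being left-symmetric relative to any intersection $\bigcap A_i^\perp$. Combining this with Lemma~\ref{lem:rk-1classificatin} (which characterizes rank-one matrices via exactly such a left-symmetric condition) immediately yields Part~1: $\Phi(\mathcal{R}_1)=\mathcal{R}_1$. For the linear-dependence assertion, Lemma~\ref{lem:A=B} reduces $\CC A=\CC B$ to the BJ-invariant condition ${}^\perp A\cap\mathcal{R}_1={}^\perp B\cap\mathcal{R}_1$, and Part~1 together with $\Phi({}^\perp A)={}^\perp\Phi(A)$ shows the condition is transported by $\Phi$.

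For Part~3 (adjacency, $n\ge 3$), the approach is to characterize adjacency of two rank-ones purely via BJ orthogonality. My proposed characterization is: \emph{two rank-one matrices $A, B$ are adjacent if and only if $\CC A=\CC B$, or there exists $C\in\mathcal{R}_1$ with $\CC C\notin\{\CC A,\CC B\}$ such that every smooth $E$ satisfying $E\perp A$ and $E\perp B$ also satisfies $E\perp C$.} Granting this, Part~3 follows by transporting the characterization across $\Phi$, using Parts~1 and 2 together with the invariance of smoothness and orthogonality.

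For the sufficiency direction, write $A=xy^\ast$ and $B=xv^\ast$ (WLOG sharing a left factor, with $y, v$ linearly independent) and set $C=x(y+v)^\ast$; by~\eqref{eq:smooth-rank-one} a smooth $E$ has $E^\perp=(pq^\ast)^\perp$ for some rank-one $pq^\ast$, and~\eqref{eq:rk-1-perp} rewrites $E\perp A$ and $E\perp B$ as $(p^\ast x)(y^\ast q)=0=(p^\ast x)(v^\ast q)$; thus either $p\perp x$, or $q$ is orthogonal to both $y$ and $v$, and in either case $(p^\ast x)((y+v)^\ast q)=0$, giving $E\perp C$. The case of a shared right factor is symmetric.

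The main obstacle is the necessity direction. Assume $A=xy^\ast$ and $B=uv^\ast$ are non-adjacent, so $x\notin\CC u$ and $y\notin\CC v$, and suppose $C=c_1c_2^\ast$ satisfies the condition. The smooth $E$ perpendicular to both $A$ and $B$ correspond to rank-ones $pq^\ast$ realizing one of the four sub-cases of $(p\perp x\vee q\perp y)\wedge(p\perp u\vee q\perp v)$. The sub-case $p\in\Span\{x,u\}^\perp$ with arbitrary $q$, nonvacuous precisely because $n\ge 3$, forces $c_1\in\Span\{x,u\}$; symmetrically $c_2\in\Span\{y,v\}$. Writing $c_1=\alpha x+\beta u$ and $c_2=\gamma y+\delta v$, the two ``cross'' sub-cases $(p\perp x,\,q\perp v)$ and $(q\perp y,\,p\perp u)$ force, respectively, $\beta=0$ or $\gamma=0$, and $\alpha=0$ or $\delta=0$. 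Enumerating the four remaining combinations yields $C\in\CC A$, $C\in\CC B$, or $C=0$, each contradicting the assumption on $C$; this completes the characterization and hence Part~3.
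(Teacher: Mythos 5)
Your proof is correct, and Parts 1 and 2 essentially follow the paper's route: Part 1 is exactly the paper's argument, and for Part 2 the paper works with outgoing neighbourhoods ($R_1^\bot=R_2^\bot$ together with \eqref{eq:rk-1-perp}) where you use Lemma~\ref{lem:A=B} and incoming neighbourhoods; the two are interchangeable. For the adjacency step with $n\ge 3$ you genuinely diverge. The paper extends an adjacent pair $xy_1^\ast,xy_2^\ast$ to $n-1$ pairwise adjacent smooth elements $R_i=xy_i^\ast$ spanning a full row, notes via Lemma~\ref{lem:zadnja} that $\bigcap_{i} R_i^\bot$ contains a nonzero relative left-symmetric point, and then invokes lemmata~\ref{lem:V-(1<rk<n-1)}--\ref{lem-norank-one-LS} to force the images $u_iv_i^\ast$ to share a common left or right factor, hence be pairwise adjacent. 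You instead give a direct first-order characterization of adjacency of two distinct rank-ones purely in terms of smooth elements and BJ orthogonality, verified by elementary computations with \eqref{eq:rk-1-perp} and \eqref{eq:smooth-rank-one}: the sufficiency witness $C=x(y+v)^\ast$ works, and in the necessity direction the sub-case $p\in\Span\{x,u\}^\bot$ (nonvacuous exactly because $n\ge 3$) together with the two cross sub-cases does force $C\in\CC A\cup\CC B\cup\{0\}$, contradicting the hypotheses on $C$. Your route avoids the left-symmetry machinery of Section~3 entirely for this step, at the price of establishing a new (but short and self-contained) characterization; the paper's route reuses Lemma~\ref{lem:zadnja}, which it needs anyway for Lemma~\ref{lem:rk-1classificatin}. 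Both arguments transport across $\Phi$ for the same reasons (bijectivity of $\Phi|_{{\mathcal R}_1}$, invariance of smoothness and of linear dependence), both obtain the converse by applying the argument to $\Phi^{-1}$, and both correctly fail at $n=2$, where $\Span\{x,u\}^\bot=0$.
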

\begin{proof}
    By Definition \ref{def}, every BJ isomorphism $\Phi$ maps the set of smooth elements bijectively onto itself. Also, $\Phi(A^\bot)=\Phi(A)^\bot$, and, moreover, the implication  $A\perp B\Rightarrow B\perp A$ holds for every $B\in\cV$, if and only if  also $\Phi(A)\perp Y\Rightarrow Y\perp \Phi(A)$ holds for every $Y\in\Phi(\cV)$ (i.e., $\Phi$ maps relative left-symmetric points in $\cV$ to relative left-symmetric points in $\Phi(\cV)$). Thus, as an immediate corollary of lemmata~\ref{lem:0classifion}--\ref{lem:rk-1classificatin}, $\Phi$ strongly preservers rank-one matrices.

    It remains to prove that the restriction $\Phi|_{{\mathcal R}_1}$ preserves linear dependence and, if $n\ge 3$, adjacency.  To see this, choose adjacent rank-one $R_1=x_1y_1^\ast$ and $R_2=x_2y_2^\ast$. Then, either $\CC x_1=\CC x_2$  or $\CC y_1=\CC y_2$. By  
    applying, if needed, the BJ isomorphism induced by isometry $X\mapsto X^\ast$ (and absorbing the appropriate scalars to the other side of the product) we can achieve that 
    $$x_1=x_2=:x.$$  Now, if $R_1=\alpha R_2$ for some nonzero scalar~$\alpha$, then $R_1^\bot=R_2^\bot$, so also $\Phi(R_1)^\bot=\Phi(R_2)^\bot$. In view of \eqref{eq:rk-1-perp} this implies that rank-one $\Phi(R_1),\Phi(R_2)$ are linearly dependent. 
    
   Otherwise,  $\CC y_1\neq \CC y_2$. Here, we assume $n\ge 3$ extend $y_1,y_2$ with $y_3,\dots, y_n$ to a basis of $\CC^n$  and form  $n-2$ additional pairwise adjacent smooth elements $R_i=xy_i^{\ast}$.  By applying suitable isometry $X\mapsto UXV^\ast$, we can achieve that $x=e_1$ and $$\Span\{y_1,y_2,\dots ,y_{n-1}\}=\Span\{e_2,e_3,\dots, e_n\}.$$  By~\eqref{eq:rk-1-perp} we then have 
   $$\bigcap\limits_{i=1}^{n-1} R_i^\bot=\{X;\;\;e_1^\ast Xe_2=\dots=e_1^\ast Xe_n=0\}$$
   and, by Lemma~\ref{lem:zadnja}, this space contains a nonzero left-symmetric point, so the same must hold for $\Phi\left(\bigcap\limits_{i=1}^{n-1} R_i^\bot\right)=\bigcap\limits_{i=1}^{n-1} \Phi(R_i)^\bot$. Moreover, since $R_i$ are rank-one, then  $\Phi(R_i)=u_iv_i^\ast$ are also rank-one by the first part of the proof. Hence, by lemmata \ref{lem:V-(1<rk<n-1)}---\ref{lem-norank-one-LS}, either $\dim\Span\{u_1,\dots, u_{n-1}\}=1$ or $\dim\Span\{v_1,\dots, v_{n-1}\}=1$. In either case, $\Phi(R_1),\dots,\Phi(R_{n-1})$ must be pairwise adjacent. Since we assumed $n\ge 3$, then in particular, $\Phi(R_1),
   \Phi(R_2)$ are adjacent. By applying the same arguments to $\Phi^{-1}$ we see that adjacency is strongly preserved.
\end{proof}

From here on, the arguments for the case $n \geq 3 $ are standard (see, e.g.,~\cite{Westwick1967, Hou1989}); we provide them in full for the sake of completeness. We assume throughout the remainder of this section that $n \geq 3$ and defer the case $n = 2$ (where we still need to verify that adjacency is preserved) to a separate section. Given a subset \( Y \subseteq \mathbb{C}^n \), let us define:
$$\hbox{a row: }x\otimes Y:=\{xy^\ast;\;\;y\in Y\}\quad\hbox{and}\quad \hbox{ a column: } Y\otimes x:=\{yx^\ast;\;\;y \in Y\}.$$

\begin{lemma}\label{lem:row-ro-column}
    For each nonzero $x$ there exists a nonzero $y$ such that $\Phi(x\otimes \CC^n)=y\otimes\CC^n$ or $\Phi(x\otimes \CC^n)=\CC^n\otimes  y$.
\end{lemma}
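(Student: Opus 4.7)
The plan is to exploit that $x\otimes\CC^n$ is a set of pairwise adjacent rank-one matrices together with $0$, so by Corollary~\ref{cor:Phi-preserves-adjacency} the same is true of $\Phi(x\otimes\CC^n)\setminus\{0\}$, and then to argue that any such collection must sit in a single row or a single column.

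First I would recall the following structural fact: two nonproportional nonzero rank-one matrices $u_1v_1^\ast$ and $u_2v_2^\ast$ are adjacent if and only if exactly one of $\CC u_1=\CC u_2$ or $\CC v_1=\CC v_2$ holds. Using $n\ge 3$ I would pick three pairwise non-parallel vectors $y_1,y_2,y_3\in\CC^n$, set $\Phi(xy_i^\ast)=u_iv_i^\ast$, and note that the three images are pairwise nonproportional (since Corollary~\ref{cor:Phi-preserves-adjacency} preserves linear dependence on $\mathcal R_1$, and the same applies to $\Phi^{-1}$) and pairwise adjacent. Without loss of generality I would assume $\CC u_1=\CC u_2=:\CC u$; the other alternative $\CC v_1=\CC v_2$ leads to the column conclusion by the exact same reasoning with left and right factors interchanged. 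Since $u_1v_1^\ast$ and $u_2v_2^\ast$ are nonproportional, $\CC v_1\neq\CC v_2$, and then the adjacency of $u_3v_3^\ast$ with each of them forces $u_3\in\CC u$, for otherwise $v_3$ would have to be parallel to both $v_1$ and $v_2$.

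Next I would use these three anchor images to pin down every other image. For arbitrary nonzero $y\in\CC^n$, write $\Phi(xy^\ast)=u'v'^\ast$. If $y$ is proportional to some $y_i$, the conclusion $u'\in\CC u$ is immediate from the preservation of linear dependence. Otherwise $u'v'^\ast$ is nonproportional and adjacent to each $uv_i^\ast$, and the same pigeonhole argument (using $\CC v_1\neq\CC v_2$) forces $u'\in\CC u$. This gives $\Phi(x\otimes\CC^n)\subseteq u\otimes\CC^n$.

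To upgrade this inclusion to equality I would run the analogous argument on the BJ isomorphism $\Phi^{-1}$ starting from $u\otimes\CC^n$: this yields $\Phi^{-1}(u\otimes\CC^n)\subseteq x'\otimes\CC^n$ or $\Phi^{-1}(u\otimes\CC^n)\subseteq\CC^n\otimes y'$. The second possibility is incompatible with the nonproportional pair $xy_1^\ast,xy_2^\ast\in\Phi^{-1}(u\otimes\CC^n)$, since elements of $\CC^n\otimes y'$ all share the right factor $y'$. The first possibility forces $\CC x=\CC x'$, and hence $\Phi(x\otimes\CC^n)=u\otimes\CC^n$. The main obstacle I anticipate is ensuring the consistency of the row-versus-column dichotomy across the whole image: a priori each pair of image vectors could a priori share a left or a right factor with different choices. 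This is precisely what the three-point anchor (and thus the hypothesis $n\ge 3$) is designed to rule out.
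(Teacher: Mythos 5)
Your proof is correct and follows essentially the same route as the paper, which simply notes that rows and columns are precisely the maximal sets of pairwise adjacent rank-one matrices and invokes Corollary~\ref{cor:Phi-preserves-adjacency}; you just spell out that maximality argument explicitly via the three-point anchor and the passage to $\Phi^{-1}$. (A minor aside: the hypothesis $n\ge 3$ is consumed by Corollary~\ref{cor:Phi-preserves-adjacency} itself rather than by the existence of three pairwise non-parallel vectors, which $\CC^2$ already provides.)
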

\begin{proof}
Notice that $x\otimes \CC^n$ and $\CC^n\otimes x $ are maximal (in set-theoretical sense) subsets of $ M_n(\CC)$ consisting of pairwise adjacent rank-one matrices, and every  maximal subset of pairwise adjacent rank-one matrices is of this form. From here, the result follows easily from Corollary~\ref{cor:Phi-preserves-adjacency}.
\end{proof}
If needed, we can replace $\Phi$ with a BJ isomorphism $X\mapsto \Phi(X)^\ast$ to achieve that 
\begin{equation}\label{eq:phi(e_1-otimesCC^n}
    \Phi(e_1\otimes\CC^n)=y\otimes\CC^n 
\end{equation} for some nonzero $y$. In the sequel, we will always assume \eqref{eq:phi(e_1-otimesCC^n}.

\begin{lemma}\label{lem:Phi-preserves-rows}
    For each nonzero $x\in\CC^n$ there exists a nonzero $y\in\CC^n$ such that $\Phi(x\otimes \CC^n)=y\otimes\CC^n$.
\end{lemma}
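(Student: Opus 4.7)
The plan is to combine the dichotomy provided by Lemma~\ref{lem:row-ro-column} with a short set-theoretic intersection argument to exclude the column case for every nonzero $x$.

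First I would dispose of the trivial situation $x\in\CC e_1$: in that case $x\otimes\CC^n=e_1\otimes\CC^n$ as sets, so the normalization \eqref{eq:phi(e_1-otimesCC^n} immediately gives the claim. I may therefore assume $x$ and $e_1$ are linearly independent, and write $\Phi(e_1\otimes\CC^n)=z\otimes\CC^n$ for some nonzero $z$ coming from~\eqref{eq:phi(e_1-otimesCC^n}.

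For such $x$, Lemma~\ref{lem:row-ro-column} leaves only two options for $\Phi(x\otimes\CC^n)$: either a row $y\otimes\CC^n$ (in which case there is nothing to prove), or a column $\CC^n\otimes w$ for some nonzero $w$, which I aim to rule out. The key observation is that two rows $e_1\otimes\CC^n$ and $x\otimes\CC^n$ whose distinguished vectors are linearly independent intersect only at $0$, since any nonzero equality $e_1u^\ast=xv^\ast$ would force the column spaces $\CC e_1$ and $\CC x$ to coincide. On the other hand, a row and a column always share the nonzero rank-one matrix $zw^\ast\in(z\otimes\CC^n)\cap(\CC^n\otimes w)$. Since $\Phi$ is a bijection satisfying $\Phi(0)=0$ (by Lemma~\ref{lem:0classifion} applied to the relation $A\perp A$, which $\Phi$ strongly preserves), it respects finite intersections, and therefore
$$\{0\}=\Phi\bigl((e_1\otimes\CC^n)\cap(x\otimes\CC^n)\bigr)=(z\otimes\CC^n)\cap(\CC^n\otimes w)\ni zw^\ast\neq 0,$$
a contradiction. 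Hence $\Phi(x\otimes\CC^n)$ must be a row.

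I do not anticipate a genuine obstacle: once Lemma~\ref{lem:row-ro-column} and the normalization~\eqref{eq:phi(e_1-otimesCC^n} are in place, the argument is essentially bookkeeping. The only point that merits a moment's care is verifying that $\Phi$ preserves finite intersections, which is automatic for any bijection that fixes $0$.
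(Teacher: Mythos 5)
Your proof is correct and follows essentially the same route as the paper: dispose of the case $x\in\CC e_1$ via the normalization~\eqref{eq:phi(e_1-otimesCC^n}, then use the dichotomy of Lemma~\ref{lem:row-ro-column} and the fact that two distinct rows meet only in $0$ while a row and a column always meet in a nonzero rank-one matrix, which is incompatible with $\Phi$ being a bijection fixing $0$. The only difference is that you spell out the intersection-preservation bookkeeping more explicitly than the paper does.
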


\begin{proof}
    We have two cases:
    
 (i) $x\in\CC e_1$. Then $x\otimes\CC^n=e_1\otimes\CC^n$ and the conclusion of the lemma is a consequence of the normalization \eqref{eq:phi(e_1-otimesCC^n}.

(ii) $x\notin\CC e_1$. Then $(x\otimes\CC^n)\cap(e_1\otimes\CC^n)=0$ so $\Phi((x\otimes\CC^n)\cap(e_1\otimes\CC^n))=0$. By  Lemma~\ref{lem:row-ro-column}, we know that $\Phi(x\otimes\CC^n)$ is either a row or a column. It cannot  be  a column, because every row and column have a nonzero intersection.
\end{proof}

\begin{lemma}\label{lem:Phi-presevers-columns}
    For each nonzero $x\in\CC^n$ there exists a nonzero $y\in\CC^n$ with $\Phi(\CC^n\otimes x)=\CC^n\otimes y$. In particular, $\Phi$ maps rows onto rows and columns onto columns.
\end{lemma}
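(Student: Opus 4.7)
The plan is to rule out the alternative offered by Lemma~\ref{lem:row-ro-column}: namely, that $\Phi(\CC^n\otimes x)$ could be a row rather than a column. Suppose, for the sake of contradiction, that $\Phi(\CC^n\otimes x)=z\otimes\CC^n$ for some nonzero $z\in\CC^n$. Since $\Phi$ is a bijection, it commutes with set-theoretic intersections, so for each nonzero $w\in\CC^n$ we have
$$\Phi\bigl((w\otimes\CC^n)\cap(\CC^n\otimes x)\bigr)=\Phi(w\otimes\CC^n)\cap\Phi(\CC^n\otimes x).$$
By Lemma~\ref{lem:Phi-preserves-rows}, $\Phi(w\otimes\CC^n)=y_w\otimes\CC^n$ for some nonzero $y_w$, so the right-hand side equals $(y_w\otimes\CC^n)\cap(z\otimes\CC^n)$.

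Next I would observe that the left-hand side is the image of $\CC wx^\ast$, which contains the nonzero element $\Phi(wx^\ast)$ (nonzero by Lemma~\ref{lem:0classifion}, since BJ isomorphisms map nonzero elements to nonzero elements). So the intersection $(y_w\otimes\CC^n)\cap(z\otimes\CC^n)$ contains a nonzero rank-one matrix. However, two rows $y_w\otimes\CC^n$ and $z\otimes\CC^n$ can share a nonzero element only if $y_w\in\CC z$, because an equation $y_w a^\ast=zb^\ast$ with linearly independent $y_w,z$ forces $a=b=0$. Hence $y_w\otimes\CC^n=z\otimes\CC^n$ for \emph{every} nonzero $w$.

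This in turn says that $\Phi$ sends every row into the single row $z\otimes\CC^n$. Since every rank-one matrix lies in some row $w\otimes\CC^n$, we obtain $\Phi(\mathcal R_1)\subseteq z\otimes\CC^n$. By Corollary~\ref{cor:Phi-preserves-adjacency}, $\Phi(\mathcal R_1)=\mathcal R_1$, yielding $\mathcal R_1\subseteq z\otimes\CC^n$, which is absurd (pick any rank-one matrix whose left factor is not parallel to $z$). This contradiction establishes that $\Phi(\CC^n\otimes x)$ must itself be a column, giving the first claim. The ``in particular'' then follows by combining this with Lemma~\ref{lem:Phi-preserves-rows}, using the fact that $\Phi^{-1}$ is also a BJ isomorphism (so rows-to-rows and columns-to-columns are both onto).

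The main technical point to verify carefully will be the observation that two distinct rows intersect only at $0$, and the fact that $\Phi$ distributes over set intersections (an immediate consequence of bijectivity); the real content of the argument is the rigidity forced by combining the row-preservation already proved with the elementary geometry of maximal pairwise-adjacent sets.
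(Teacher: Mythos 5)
Your proposal is correct and is essentially the paper's own argument: the paper likewise notes that a column meets every row in a nonzero rank-one matrix while two distinct rows meet only in $0$, so if $\Phi(\CC^n\otimes x)$ were a row then every row would have to map into that single row, contradicting surjectivity of $\Phi|_{\mathcal R_1}$. Your version merely spells out the intersection bookkeeping in more detail.
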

\begin{proof}
A column $\CC^n\otimes x$ intersects every row in a rank-one matrix, while two rows intersect in a rank-one matrix if and only if they are equal. Hence, if $\Phi(\CC^n\otimes x)=y\otimes \CC^n$ would be a row, then, by Lemma~\ref{lem:Phi-preserves-rows},   $\Phi|_{\mathcal{R}_1}$ would have to map every row to $y\otimes \CC^n$, contradicting its surjectivity.
\end{proof}
Let $\PP(\CC^n):=\{[x]:=\CC x;\;\;x\in\CC^n\setminus\{0\}\}$ be a projective space. We call the one-dimensional space $[x]$ a line for short (or also a projective point). Two lines, $[x]$ and $[y]$, are perpendicular, denoted by $[x]\perp[y]$, if any of their representatives are perpendicular, i.e, if $y^\ast x=0$.

 Notice that  $[x_1]=[x_2]$ if and only if  $x_1\otimes\CC^n=x_2\otimes\CC^n$. Therefore, by lemmata~\ref{lem:Phi-preserves-rows}---\ref{lem:Phi-presevers-columns}, $\Phi$ induces a well-defined bijection $\phi\colon\PP(\CC^n)\to\PP(\CC^n)$, given by
$$\phi([x])=[y]\quad\hbox{if}\quad\Phi(x\otimes\CC^n)=y\otimes \CC^n.$$
\begin{lemma}
    $\phi$ strongly preserves perpendicularity of lines.
\end{lemma}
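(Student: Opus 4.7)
The plan is to reduce perpendicularity of lines to a condition that is expressible purely in terms of BJ orthogonality between entire rows, so that the (strong) preservation of BJ orthogonality by $\Phi$, combined with the fact that $\Phi$ sends rows to rows (Lemma~\ref{lem:Phi-preserves-rows}), yields the result at the projective level. Concretely, I would first invoke~\eqref{eq:rk-1-perp} to note that for rank-one matrices $xy^\ast$ and $uv^\ast$, one has $xy^\ast\perp uv^\ast$ if and only if $(x^\ast u)(v^\ast y)=0$, i.e., if and only if $x\perp u$ or $y\perp v$.

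The key characterization is then: $[x]\perp [u]$ (equivalently $x^\ast u=0$) if and only if \emph{every} element of $x\otimes\CC^n$ is BJ orthogonal to \emph{every} element of $u\otimes\CC^n$. The forward direction is immediate from the identity above, since $x^\ast u=0$ forces $(x^\ast u)(v^\ast y)=0$ for all $y,v$. For the converse, if $x^\ast u\neq 0$, one may pick any nonzero $y=v$, whence $(x^\ast u)\|y\|^2\neq 0$ and so $xy^\ast\not\perp uy^\ast$, exhibiting a pair from the two rows that fails to be orthogonal.

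With this characterization in hand, since $\Phi$ is a BJ isomorphism, $\Phi(x\otimes\CC^n)=\phi([x])\otimes\CC^n$ and $\Phi(u\otimes\CC^n)=\phi([u])\otimes\CC^n$, and $\Phi$ strongly preserves BJ orthogonality element-wise, the condition ``every element of $x\otimes\CC^n$ is BJ orthogonal to every element of $u\otimes\CC^n$'' transfers verbatim to ``every element of $\phi([x])\otimes\CC^n$ is BJ orthogonal to every element of $\phi([u])\otimes\CC^n$''. Applying the characterization a second time yields $\phi([x])\perp\phi([u])$. The reverse implication follows by applying the same argument to $\Phi^{-1}$, which is also a BJ isomorphism and induces $\phi^{-1}$.

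There is essentially no obstacle here beyond bookkeeping; the entire content of the lemma is packaged into the observation that line perpendicularity is detectable at the level of whole rows via BJ orthogonality, a fact that is a direct consequence of~\eqref{eq:rk-1-perp}.
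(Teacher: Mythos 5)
Your proposal is correct and takes essentially the same approach as the paper, which proves the lemma in one line by observing via~\eqref{eq:rk-1-perp} that $[x]\perp[y]$ holds if and only if every element of the row $x\otimes\CC^n$ is BJ orthogonal to every element of the row $y\otimes\CC^n$. Your write-up merely supplies the (easy) converse direction of that characterization explicitly, which the paper leaves implicit.
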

\begin{proof}
By~\eqref{eq:rk-1-perp}, $[x]\perp [y]$ if and only if every     $R\in x\otimes \CC^n $ and every $S\in y\otimes\CC^n$ are BJ orthogonal.
\end{proof}
It now follows  from Faure's paper,~\cite[Corollary 4.5 and Lemma~4.2.]{Faure2002} that $\phi[x]=[Ux]$ for some (conjugate)-linear isometry $U$. (We caution here that if $U$ is conjugate-linear, then its adjoint is also conjugate-linear and defined by $\langle Ux,y\rangle=\langle U^\ast y,x\rangle$; we still have $U^\ast U=I$, however, now, $\langle Ux,Uy\rangle=\langle y,x\rangle$.) As such, 
$$\Phi(x\otimes \CC^n)=(Ux)\otimes\CC^n;\qquad x\in\CC^n.$$

We next temporarily form a  BJ isomorphism $\Psi\colon X\mapsto \Phi(X^\ast )^\ast$. By  Lemma~\ref{lem:Phi-presevers-columns}, it maps a row $x\otimes \CC^n$ onto $\Psi(x\otimes\CC^n)=\Phi(\CC^n\otimes x)^\ast=(\CC^n\otimes z)^\ast=z\otimes \CC^n$ for some $z=z_x$, i.e., into another row. We can, hence, repeat the above arguments on $\Psi$ to see that there exists a (conjugate)linear isometry $V$ such that $\Psi(x\otimes\CC^n)=(Vx)\otimes\CC^n$. Equivalently, $\Phi(\CC^n\otimes y)=\CC^n\otimes (Vy)$.  Thus,
\begin{align*}
\Phi(xy^\ast)\in\Phi(x\otimes\CC^n\cap \CC^n\otimes y)&=\Phi(x\otimes \CC^n)\cap\Phi(\CC^n\otimes y)\\
&=(Ux)\otimes\CC^n\cap \CC^n\otimes (Vy)=\CC (Ux)(Vy)^\ast.
\end{align*}
It implies that
$$\Phi(xy^\ast)=\gamma_{xy^\ast} (Ux)(Vy)^\ast;\qquad \gamma_{xy^\ast}\in\CC\setminus\{0\}. $$

Assume $U$ is conjugate-linear, while  $V$ is linear. Let 
$$\hat{J}\colon x=\sum x_i e_i\mapsto\bar{x}:= \sum \bar{x}_i e_i$$ be the conjugation relative to the standard basis. Then, $U=(U\hat{J})\hat{J}$ where $(U\hat{J})$ is unitary. As such, the map $\hat{\Phi}\colon X\mapsto (U\hat{J})^\ast\Phi(X)V$ is again a BJ isomorphism and its restriction to rank-one matrices satisfies 
$$\hat{\Phi}(xy^\ast)\dot{=}\bar{x}y^\ast$$
(for simplicity, we have denoted and will continue to denote the fact that $\mathbb{C} A = \mathbb{C} B$, i.e., that $A$ and $B$ are equal up to a scalar, by $A \dot{=} B$).
In particular, $\hat{\Phi}$  fixes all $E_{ij}$ (modulo scalars). Then,  
$\cV:=\bigcap_{(ij)\notin\{(11),(12),(21),(22)\}}E_{ij}^\bot = M_2(\CC)\oplus 0_{n-2}=M_2(\CC)$ is invariant for $\hat{\Phi}$. Conversely, if $\hat{\Phi}(B)\in\cV$, then $\hat{\Phi}(E_{ij})\dot{=}E_{ij}\perp\hat{\Phi}(B)$, so $E_{ij}\perp B$ for $i,j\ge 3$ giving $B\in \cV$. Hence, $\Phi|_{\cV}\colon\cV\to\cV$ is a BJ isomorphism.  We will treat such maps separately in a self-inclusive section devoted to  the case  $n=2$.  The result (see Lemma~\ref{lem:phi=psi} below) is that $\Phi|_{\cV}( xy^\ast)\dot{=}\phi(x)\phi(y)^\ast$
for some bijection $\phi\colon\CC^2\to\CC^2$. This clearly contradicts $\hat{\Phi}(xy^\ast)\dot{=}\bar{x}y^\ast$. 

 Hence, if $U$ is conjugate-linear, then $V$ is also; likewise  one can show the converse. As such, $(Ux)(Vy)^\ast=U(xy^\ast)V^\ast$.

Notice that $X\mapsto U^\ast XV$ is a (conjugate)linear isometry on $M_n(\CC)$ hence it induces a BJ isomorphism. As such, we  can replace $\Phi$ by a map $X\mapsto U^\ast 
\Phi(X)V$ to achieve that rank-one matrices are fixed, modulo scalar multiples. 
\begin{equation}\label{eq:modifiedPhi}
\Phi(xy^\ast)=\gamma_{xy^\ast}xy^\ast;\qquad xy^\ast\in{\mathcal R}_1.
\end{equation}

\begin{proof}[Proof of Theorem~\ref{thm:simplecase} for $n\ge 3$]
We can find (conjugate)linear isometries $U,V$ such that the map $X\mapsto U^\ast\Phi(X)V$, which we still denote by $\Phi$, is a BJ isomorphism and satisfies~\eqref{eq:modifiedPhi} for rank-one $A=xy^\ast$. By Lemma~\ref{lem:0classifion}, $\Phi(0)=0$, so  \eqref{eq:modifiedPhi} holds also for $A=0$. Finally, choose any matrix $A$ with $\rank A\ge 2$ and  in Lemma~\ref{lem:A=B} insert $B:=\Phi(A)$ to get that $B=\gamma_A A$. 
\end{proof}
\section{BJ isomorphisms on \texorpdfstring{$M_2(\CC)$}{TEXT}}
We continue with the proof of Theorem~\ref{thm:simplecase} and now consider BJ isomorphisms $\Phi$ on $M_2(\CC)$. This requires separate treatment since fundamental theorem of  projective geometry is no longer available. We will make  a series of modifications on $\Phi$, whereby we compose $\Phi$ with (conjugate)linear isometries of the form $X\mapsto X^\ast$ or $X\mapsto UXV^\ast$; the newly obtained maps, which we will, with a slight abuse of notation, continue to denote by $\Phi$, will remain  BJ isomorphism, but will fix an increasingly large sets. Our modifications are based on a series of lemmata.
To start with, by Corollary~\ref{cor:Phi-preserves-adjacency} we know that $\Phi$ maps rank-one onto rank-one (however we have not yet shown that $\Phi$ preserves adjacency). By modifying it with $U\Phi(\cdot)V^\ast$ for suitable unitaries $U,V$  we can achieve that 
\begin{equation}\label{eq:E11fixed}
    \Phi(E_{11})\dot{=}E_{11}
\end{equation}

\begin{lemma}\label{lem:n=2-E_(11)}
    The following are equivalent for a rank-one $B\in M_2(\CC)$.
    \begin{itemize}
        \item[(i)] $E_{11}\perp B$ and $E_{11}^\bot\cap B^\bot$ consists of matrices with rank-one at most.
    \item[(ii)] $B\in\CC E_{12}\cup \CC E_{21}\setminus\{0\}$.
    \end{itemize}
\end{lemma}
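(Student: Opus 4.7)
Writing $B = xy^\ast$ and invoking~\eqref{eq:rk-1-perp}, I would start from $E_{11}^\bot = \{X \in M_2(\CC);\, X_{11} = 0\}$ and $B^\bot = \{X;\, x^\ast X y = 0\}$. For the easier direction (ii) $\Rightarrow$ (i), a direct check suffices: $e_1^\ast E_{12} e_1 = 0 = e_1^\ast E_{21} e_1$ gives $E_{11} \perp B$, and the intersection
$E_{11}^\bot \cap (\CC E_{12})^\bot = \{X;\, X_{11} = X_{12} = 0\}$
consists of matrices with vanishing first row, hence of rank at most one; the case $B \in \CC E_{21}$ is symmetric.

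For the substantive direction (i) $\Rightarrow$ (ii), the relation $E_{11} \perp B$ translates via~\eqref{eq:rk-1-perp} to $x_1 \overline{y_1} = 0$. Since the conjugate-linear isometry $X \mapsto X^\ast$ is a BJ automorphism of $M_2(\CC)$ that fixes $E_{11}$ and swaps $\CC E_{12}$ with $\CC E_{21}$, it preserves both (i) and (ii); this lets me reduce to the case $x_1 = 0$, i.e.\ $x = \lambda e_2$ with $\lambda \neq 0$. Writing $y = y_1 e_1 + y_2 e_2$, the two linear conditions defining $E_{11}^\bot \cap B^\bot$ become $X_{11} = 0$ and $y_1 X_{21} + y_2 X_{22} = 0$. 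I would then rule out $y_2 \neq 0$ by exhibiting a rank-two element of the intersection: when $y_1,y_2$ are both nonzero, the matrix with $X_{11} = 0$, $X_{12} = X_{22} = 1$, $X_{21} = -y_2/y_1$ has determinant $y_2/y_1 \neq 0$; when $y_1 = 0$ and $y_2 \neq 0$, the matrix $E_{12} + E_{21}$ lies in the intersection (which now reads $X_{11} = X_{22} = 0$) and has rank two. Hence $y_2 = 0$, so $y \in \CC e_1$ and $B = \lambda \overline{y_1}\, e_2 e_1^\ast \in \CC E_{21}$; undoing the adjoint reduction yields $B \in \CC E_{12} \cup \CC E_{21}$ as required.

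The argument is essentially a short linear-algebra case analysis built on the rank-one orthogonality formula~\eqref{eq:rk-1-perp}. No serious obstacle is expected; the only organizational point worth being careful about is to exploit the $X \mapsto X^\ast$ symmetry at the outset, which collapses the two mirror-image branches of (ii) into a single computation.
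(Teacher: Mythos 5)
Your proof is correct and follows essentially the same route as the paper's: both directions rest on \eqref{eq:rk-1-perp}, the adjoint $X\mapsto X^\ast$ is used to collapse the two branches of (ii) into one, and (i)\,$\Rightarrow$\,(ii) is settled by exhibiting a rank-two element of $E_{11}^\bot\cap B^\bot$ whenever $B$ is not a scalar multiple of $E_{12}$ or $E_{21}$. The only cosmetic difference is that the paper produces its rank-two witness in one stroke as $e_2e_1^\ast+x'e_2^\ast$ with $x'\perp x$, whereas you describe the intersection as an explicit solution set of two linear conditions on the entries and split into two sub-cases.
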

\begin{proof}
(i) $\Rightarrow$ (ii). By \eqref{eq:rk-1-perp} $E_{11}^\bot=\left(\begin{smallmatrix}
    0 & \CC\\
    \CC &\CC
\end{smallmatrix}\right)$. Then, rank-one $B\in E_{11}^\bot$ if and only if $B=e_2x^\ast$ or $B=xe_2^\ast$ for a nonzero $x\in\CC$. By applying, if needed, a conjugate-linear isometry (hence, a BJ isomorphism) $X\mapsto X^\ast$, we can reduce to the later case, when $B=xe_2^\ast$. Let $x'\neq0$ be perpendicular to $x$  and consider a matrix $A=e_2e_1^\ast+x'e_2^\ast\in E_{11}^\bot$. Since $Be_2=x\perp x'=Ae_2$ we have $A\in  E_{11}^\bot\cap B^\bot$.  However, $\rank A=2$ except when $x'\in\CC e_2$, or equivalently, except when $x\in\CC e_1$ and $B\in\CC e_1e_2^\ast$, so (ii) holds.

(ii) $\Rightarrow$ (i). If $0\neq B\in\CC E_{12}$, then, by \eqref{eq:rk-1-perp}, $E_{11}^\bot\cap B^\bot=\left(\begin{smallmatrix}
    0 & 0\\
    \CC &\CC
\end{smallmatrix}\right)$ so (i) clearly holds. Likewise if $0\neq B\in\CC E_{21}$. 
\end{proof}
We apply the lemma on $B:=\Phi(E_{12})$, while keeping in mind \eqref{eq:E11fixed} and the fact that $\Phi$ strongly preserves matrices of rank-one. Consequently, $\Phi(E_{12})$ is either a scalar multiple of $E_{12}$ or a scalar multiple of $E_{21}$.
By further modifying, if needed, $\Phi$ with the isometry $X\mapsto X^\ast$  we can hence achieve that
$$\Phi(E_{11})\dot{=}E_{11}\hbox{ and  } \Phi(E_{12})\dot{=}E_{12}.$$

Notice that, by the same arguments, $E_{21}$ is also mapped to a scalar multiple of $E_{12}$ or of $E_{21}$. The former case is impossible because $E_{12}^\bot\neq E_{21}^\bot$, so $E_{12}^\bot=\Phi(E_{12})^\bot\neq\Phi(E_{21})^\bot$. Hence, also $E_{21}$ is fixed (modulo scalars). Then,   $\CC E_{22}=E_{11}^\bot\cap E_{12}^\bot\cap E_{21}^\bot$, so also $E_{22}$ is fixed  (modulo scalars). 

It follows that $\Phi$ also fixes the set of matrices living in the first/second column/row (these sets coincide with $E_{ij}^\bot\cap E_{ii}^\bot$). It also fixes the set of diagonal (=  $E_{12}^\bot\cap E_{21}^\bot$) and the set of anti-diagonal (= $E_{11}^\bot\cap E_{22}^\bot$) matrices. Moreover, $\Phi$ also preserves unitary matrices, modulo the scalars, because these coincide with right-symmetric points (see~\cite[Theorem 2.5]{turnsek2005} or \cite[Lemma 3.7]{simple}).  In particular, $\Phi(I), \Phi(J)$ are (scalar multiples of) diagonal and anti-diagonal  unitary matrices, respectively; here $$J:=\left(\begin{matrix}
    0 & 1\\
    1&0
\end{matrix}\right).$$ By modifying $\Phi$ with $X\mapsto U\Phi(X)V^\ast$ for diagonal unitary $U,V$ we can achieve that $\Phi(I)$ and $\Phi(J)$ are fixed, modulo the scalars --- in fact, if  $\Phi(I)=\left(
\begin{smallmatrix}
 a & 0 \\
 0 & d \\
\end{smallmatrix}
\right)$ and $\Phi(J)=\left(
\begin{smallmatrix}
 0 & b \\
 c & 0 \\
\end{smallmatrix}
\right)$ then we achieve this with unitary $U=\left(
\begin{smallmatrix}
 1 & 0 \\
 0 & \frac{\sqrt{a} \sqrt{b}}{\sqrt{c}
   \sqrt{d}} \\
\end{smallmatrix}
\right),V=\left(
\begin{smallmatrix}
 1 & 0 \\
 0 & \frac{\sqrt{a} \sqrt{c}}{\sqrt{b}
   \sqrt{d}} \\
\end{smallmatrix}
\right)$. Notice that this modification still preserves the matrix units $E_{ij}$, modulo the scalars.

In particular, the columns $\CC^2\otimes  e_1$ and $\CC^2 \otimes e_2$ and the rows $e_1\otimes\CC^2$ and $e_2\otimes \CC^2$ remains to be  preserved.

Let us record these observations for future use:
\begin{lemma}\label{lem:summary} A BJ isomorphism $\Phi\colon M_n(\CC)\to M_n(\CC)$ strongly preserves rank-one. Moreover, if $\Phi(E_{11})\dot{=}E_{11}$, then there exists an isometry $\dagger\colon M_2(\CC)\to M_2(\CC)$ which is either the identity or the adjoint, and diagonal unitary matrices $U,V$ such that a BJ isomorphism  $U\Phi(\cdot)^\dagger V$ fixes  $E_{ij}$, $I$, and $J$, modulo a scalar multiple. \qed
\end{lemma}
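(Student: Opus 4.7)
The plan is to assemble the conclusion from observations that the preceding paragraphs have already carried out piecewise; the statement is really a packaging lemma. The first assertion, that $\Phi$ strongly preserves rank-one matrices, is exactly Corollary~\ref{cor:Phi-preserves-adjacency}, so no new work is required for it.

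For the second assertion I would proceed in three stages. Assuming $\Phi(E_{11})\dot{=}E_{11}$, apply Lemma~\ref{lem:n=2-E_(11)} with $B:=\Phi(E_{12})$. Since $\Phi$ is a bijection preserving $\bot$ in both directions, preserves rank-one, and sends $E_{11}$ to a scalar multiple of itself, the hypothesis (i) of that lemma transfers from $(E_{11},E_{12})$ to $(E_{11},B)$, forcing $\Phi(E_{12})\in \CC E_{12}\cup\CC E_{21}$. Replacing $\Phi$ by $X\mapsto\Phi(X)^\ast$ in the latter case (this is a BJ isomorphism since $X\mapsto X^\ast$ is a conjugate-linear isometry), I may assume $\Phi(E_{12})\dot{=}E_{12}$. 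The same lemma applied to $\Phi(E_{21})$ gives $\Phi(E_{21})\in\CC E_{12}\cup\CC E_{21}$; the option $\CC E_{12}$ is ruled out because $E_{12}^\bot\ne E_{21}^\bot$ (one sees from \eqref{eq:rk-1-perp} that they are different subspaces) so $\Phi(E_{21})\dot{=}E_{21}$. Finally $\Phi(E_{22})\dot{=}E_{22}$ follows from the identity $\CC E_{22}=E_{11}^\bot\cap E_{12}^\bot\cap E_{21}^\bot$, which is again visible from~\eqref{eq:rk-1-perp}.

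Next I would locate $\Phi(I)$ and $\Phi(J)$. Unitaries in $M_2(\CC)$ coincide with right-symmetric points (this is the external characterization already cited from~\cite{turnsek2005,simple}), and this property is transported by any BJ isomorphism, so $\Phi(I)$ and $\Phi(J)$ are unitaries modulo scalars. The diagonal matrices are exactly $E_{12}^\bot\cap E_{21}^\bot$ and the anti-diagonal matrices are exactly $E_{11}^\bot\cap E_{22}^\bot$, so both of these subspaces are $\Phi$-invariant once the previous stage is completed. Since $I$ is diagonal and $J$ is anti-diagonal, $\Phi(I)$ must be a diagonal unitary and $\Phi(J)$ an anti-diagonal unitary.

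The last stage is the normalization. Writing $\Phi(I)=\diag(a,d)$ and $\Phi(J)$ as an anti-diagonal matrix with entries $b,c$, I would take the explicit diagonal unitaries $U,V$ exhibited in the paragraph immediately preceding the lemma and verify by direct computation that $U\Phi(I)V^\ast\dot{=}I$ and $U\Phi(J)V^\ast\dot{=}J$. The only step that needs a small sanity check, and the one I expect to be the \emph{bookkeeping} obstacle, is that conjugation by diagonal unitaries acts by a unimodular scalar on each $E_{ij}$; this ensures that the normalization of $I$ and $J$ does not spoil the earlier arrangements $\Phi(E_{ij})\dot{=}E_{ij}$. Since each composition step ($X\mapsto \Phi(X)^\ast$, $X\mapsto U\Phi(X)V^\ast$) is obtained by composing with a (possibly conjugate-linear) isometry, the resulting map remains a BJ isomorphism, producing the dagger $\dagger$ and the diagonal unitaries $U,V$ asserted in the lemma.
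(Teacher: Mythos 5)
Your proposal is correct and follows essentially the same route as the paper: the lemma carries a \qed precisely because it packages the preceding discussion (Corollary~\ref{cor:Phi-preserves-adjacency} for rank-one preservation, Lemma~\ref{lem:n=2-E_(11)} for locating $\Phi(E_{12})$ and $\Phi(E_{21})$, the identity $\CC E_{22}=E_{11}^\bot\cap E_{12}^\bot\cap E_{21}^\bot$, right-symmetry of unitaries, and the explicit diagonal unitaries for the final normalization). Your closing sanity check that conjugation by diagonal unitaries multiplies each $E_{ij}$ by a unimodular scalar is exactly the observation the paper records when it notes that this modification still preserves the matrix units modulo scalars.
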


\begin{lemma}\label{lemorthogonality-of-xz-and-yz}
    Matrices $x z^\ast$ and $y z^\ast$ are BJ orthogonal if and only if $x^\ast y=0$.
\end{lemma}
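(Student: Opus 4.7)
The plan is to reduce this to the characterization of BJ orthogonality for rank-one matrices given by \eqref{eq:rk-1-perp}. Recall that this equation states $(xz^\ast)^\bot=\{X : x^\ast X z=0\}$, which was itself derived from Proposition~\ref{prop:M_0(A)} upon noting that a rank-one matrix $xz^\ast$ attains its norm only on scalar multiples of $z$. So the bulk of the work has already been done; what remains is a one-line substitution.

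First I would dispose of the degenerate cases: if $z=0$ then both $xz^\ast$ and $yz^\ast$ are zero, and an appeal to Lemma~\ref{lem:0classifion} (or a direct check) confirms that $0\perp 0$, while $x^\ast y$ is unconstrained, so the statement tacitly assumes $z\neq 0$ (which is the only case of interest for the upcoming arguments). Similarly, if $x=0$ or $y=0$, then at least one of the two matrices is $0$ and both sides of the equivalence hold.

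In the main case, with $x,y,z$ all nonzero, I apply \eqref{eq:rk-1-perp} with the rank-one matrix $A=xz^\ast$ and test element $X=yz^\ast$, computing
\[
x^\ast X z \;=\; x^\ast (yz^\ast) z \;=\; (x^\ast y)(z^\ast z) \;=\; \|z\|^2\, x^\ast y .
\]
Since $\|z\|^2>0$, this vanishes precisely when $x^\ast y=0$, which is equivalent to $xz^\ast \perp yz^\ast$ by \eqref{eq:rk-1-perp}. There is no real obstacle here; the lemma is simply a convenient reformulation recorded for later reference in the ongoing analysis of $\Phi$ on $M_2(\CC)$.
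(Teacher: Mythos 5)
Your proof is correct and is essentially the paper's own argument: the paper likewise disposes of the lemma by a direct appeal to \eqref{eq:rk-1-perp}, and your computation $x^\ast(yz^\ast)z=\|z\|^2\,x^\ast y$ is exactly the substitution that makes it work. The extra attention to the degenerate cases $x=0$, $y=0$, $z=0$ is harmless and slightly more careful than the paper's one-line proof.
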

\begin{proof}
  Follows directly from~\ref{eq:rk-1-perp}.
\end{proof}
It follows that there exist two bijections $\phi_i\colon\CC^2\to\CC^2$, which in both directions map perpendicular pairs of vectors to perpendicular pairs (and consequently also satisfy $\phi_i(\CC x)=\CC \phi_i(x)$), such that 
$$\Phi(xe_i^\ast)=\phi_i(x)e_i^\ast; \qquad (i=1,2).$$
We next show that $\phi_1\dot{=}\phi_2$ (i.e., $\phi_1=\phi_2$ modulo a scalar valued function).
\begin{lemma}
The following are equivalent for rank-one matrices $A=\left(\begin{smallmatrix}
    \alpha &0\\
    \beta &0
\end{smallmatrix}\right)$ and $B=\left(\begin{smallmatrix}
   0& \gamma \\
   0& \delta 
\end{smallmatrix}\right)$:
\begin{itemize}
    \item[(i)] $A^\bot\cap B^\bot$ consists of rank-one matrices only.
    \item[(ii)] $(\alpha,\beta )\in\CC(\gamma ,\delta)\setminus\{0\}$.
\end{itemize}
\end{lemma}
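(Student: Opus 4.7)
The plan is to use \eqref{eq:rk-1-perp} to obtain an explicit parametrization of the two-dimensional subspace $A^\bot\cap B^\bot$, and then determine precisely when every element of it has rank at most one.

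Writing $A=\tilde a\,e_1^\ast$ with $\tilde a=(\alpha,\beta)^T$ and $B=\tilde b\,e_2^\ast$ with $\tilde b=(\gamma,\delta)^T$, formula \eqref{eq:rk-1-perp} gives
$$A^\bot=\{X\in M_2(\CC):\tilde a^\ast X e_1=0\},\qquad B^\bot=\{X\in M_2(\CC):\tilde b^\ast X e_2=0\}.$$
So $X\in A^\bot\cap B^\bot$ if and only if its first column lies in the hyperplane $\tilde a^\bot\subseteq\CC^2$ and its second column lies in $\tilde b^\bot$. Each of these orthogonal complements is one-dimensional, and I would fix spanning vectors $u\in\tilde a^\bot$ and $v\in\tilde b^\bot$. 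Then every $X\in A^\bot\cap B^\bot$ is of the form
$$X=[\,su\mid tv\,]=s\,ue_1^\ast+t\,ve_2^\ast$$
for a unique pair $(s,t)\in\CC^2$, so the intersection is a concrete two-parameter family.

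Next, I would observe that this family consists entirely of rank-at-most-one matrices precisely when $u$ and $v$ are linearly dependent: if not, then choosing $s=t=1$ produces an invertible matrix, while if $v=\mu u$ for some scalar $\mu$, then $X=u(se_1+\mu t e_2)^\ast$ has rank at most one for all $s,t$. Finally, the condition $\CC u=\CC v$ is equivalent, by taking orthogonal complements inside $\CC^2$, to $\CC\tilde a=\CC\tilde b$, which is exactly statement (ii).

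I do not anticipate any real obstacle here: once \eqref{eq:rk-1-perp} is invoked, the lemma reduces to the elementary observation that the two columns of a $2$-by-$2$ matrix can be chosen independently from prescribed one-dimensional subspaces with the resulting matrix being always rank-deficient if and only if those two subspaces coincide.
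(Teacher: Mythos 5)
Your proof is correct and follows essentially the same route as the paper's: both invoke \eqref{eq:rk-1-perp} to identify $A^\bot\cap B^\bot$ as the two-parameter family of matrices whose columns range over the one-dimensional spaces $(\alpha e_1+\beta e_2)^\bot$ and $(\gamma e_1+\delta e_2)^\bot$, and then observe that all such matrices have rank at most one exactly when these two lines (equivalently, their orthogonal complements) coincide. The only difference is cosmetic — the paper writes the family out in explicit coordinates, while you keep abstract spanning vectors $u,v$ (modulo a harmless conjugation slip in the rank-one factorization $u(se_1+\mu t e_2)^\ast$).
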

\begin{proof}
 Write $A=(\alpha e_1+\beta e_2)e_1^\ast$ and  $B=(\gamma e_1+\delta e_2)e_2^\ast$; by \eqref{eq:rk-1-perp} $$A^\bot\cap B^\bot=\left\{\begin{pmatrix}
      -\lambda\bar{\beta} &-\mu\bar{\delta}\\
     \lambda \bar{\alpha} & \mu\bar{\gamma}
  \end{pmatrix};\; \lambda,\mu\in\CC\right\}.$$ This consists of matrices of rank-one at most if and only if the two columns are linearly dependent (i.e., are parallel), or equivalently, if and only if their orthogonal complements, $\alpha e_1+\beta e_2$ and $\gamma e_1+\delta e_2$ are parallel, as claimed by (ii).
\end{proof}
We apply this to rank-one matrices $A=\Phi(\left(\begin{smallmatrix}
    \alpha &0\\
    \beta &0
\end{smallmatrix}\right))=\phi_1(\alpha e_1+\beta e_2)e_1^\ast$ and $B=\Phi(\left(\begin{smallmatrix}
       0 &\alpha\\
       0 &\beta
\end{smallmatrix}\right))=\phi_2(\alpha e_1+\beta e_2)e_2^\ast$
to deduce that $\phi_1=\phi_2=:\phi$, modulo a scalar multiple.

Similar conclusion (e.g, by applying the previous arguments on BJ isomorphism $\Psi(X)=\Phi(X^\ast)^\ast$) can be shown for matrices which live only in the first  or only in the second row: here, $\psi_1\dot{=}\psi_2=:\psi$ is such that $\Phi(e_ix^\ast)=e_i\psi_i(x)^\ast\in\CC \,e_i\psi(x)^\ast$.

\begin{lemma}\label{lem:M_2-phi}
    There exists a scalar-valued function $\gamma$ such that the restriction of $\Phi$ to rank-one matrices takes the form
    $$\Phi( xy^\ast) =\gamma(xy^\ast)\,\phi(x)\psi(y)^\ast.$$
\end{lemma}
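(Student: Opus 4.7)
The plan is to upgrade the column- and row-wise descriptions of $\Phi$ into a single product formula by first showing that $\Phi$ maps every row $x\otimes\CC^2$ to the row $\phi(x)\otimes\CC^2$ and every column $\CC^2\otimes y$ to the column $\CC^2\otimes\psi(y)$, and then intersecting these two sets.

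To prove that rows go to rows, I would fix nonzero $x\in\CC^2$ and exploit the identity $(xe_1^\ast)^\bot\cap(xe_2^\ast)^\bot=\{X;\,x^\ast X=0\}$, which follows directly from \eqref{eq:rk-1-perp}. Because $\Phi$ preserves outgoing neighborhoods and satisfies $\Phi(xe_i^\ast)\dot{=}\phi(x)e_i^\ast$, it bijects $\{X;\,x^\ast X=0\}$ with $\{Y;\,\phi(x)^\ast Y=0\}$. The nonzero rank-one matrices in the former set are precisely $x_\perp\otimes\CC^2\setminus\{0\}$ (for any fixed nonzero $x_\perp\perp x$), and similarly for the latter; strong preservation of rank-one matrices therefore bijects $x_\perp\otimes\CC^2$ with $\phi(x)_\perp\otimes\CC^2$. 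To identify this image row as $\phi(x_\perp)\otimes\CC^2$, I would note that $\Phi(x_\perp e_1^\ast)\dot{=}\phi(x_\perp)e_1^\ast$ must lie inside it, forcing $\phi(x_\perp)\in\CC\phi(x)_\perp$; since $z\mapsto z_\perp$ is a bijection of lines in $\CC^2$, this yields $\Phi(z\otimes\CC^2)=\phi(z)\otimes\CC^2$ for every nonzero $z$. A symmetric argument based on $\{X;\,Xy=0\}=(e_1y^\ast)^\bot\cap(e_2y^\ast)^\bot$ gives $\Phi(\CC^2\otimes w)=\CC^2\otimes\psi(w)$.

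The conclusion is then immediate: since $xy^\ast\in(x\otimes\CC^2)\cap(\CC^2\otimes y)$, its image satisfies
$$\Phi(xy^\ast)\in(\phi(x)\otimes\CC^2)\cap(\CC^2\otimes\psi(y))=\CC\,\phi(x)\psi(y)^\ast,$$
so there exists a nonzero scalar $\gamma(xy^\ast)$, nonzero by Lemma~\ref{lem:0classifion}, such that $\Phi(xy^\ast)=\gamma(xy^\ast)\,\phi(x)\psi(y)^\ast$. The one delicate point is matching $\phi(x)_\perp$ with $\phi(x_\perp)$ up to a scalar: this is precisely where the newly discovered row behavior of $\Phi$ must be reconciled with the previously established column behavior, and without this compatibility the separate row and column data could not be combined into a single rank-one product formula.
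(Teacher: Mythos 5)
Your argument is correct and is essentially the paper's own proof: the paper likewise pins down $\CC\,xy^\ast$ as $(\tilde{x}e_1^\ast)^\bot\cap(\tilde{x}e_2^\ast)^\bot\cap(e_1\tilde{y}^\ast)^\bot\cap(e_2\tilde{y}^\ast)^\bot$ with $\tilde{x}\perp x$, $\tilde{y}\perp y$ (your row--column intersection in disguise, since in $M_2(\CC)$ the set $\{X;\,\tilde x^\ast X=0\}$ \emph{is} the row $x\otimes\CC^2$), applies $\Phi$, and uses that $\phi$ and $\psi$ carry perpendicular pairs to perpendicular pairs to identify the image intersection as $\CC\,\phi(x)\psi(y)^\ast$. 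The compatibility point you flag, $\phi(x_\perp)\dot{=}\phi(x)_\perp$, is exactly the step the paper invokes via the perpendicularity preservation of $\phi$, so nothing is missing.
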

\begin{proof}
Let $\tilde{x},\tilde{y}\in\CC^2$ be perpendicular to $x,y$, respectively. By~\eqref{eq:rk-1-perp}, $(\tilde{x}e_i^{\ast})^\bot$ is the set of all matrices $T$ with $Te_i\in\CC x$, so  $(\tilde{x}e_1^{\ast})^\bot\cap (\tilde{x}e_2^{\ast})^\bot$ equals those $T$ with image contained in $\CC x$. Due to $\mathrm{Im}(T^\ast)=(\Ker T)^\bot$, it easily follows that
$$\CC\,xy^\ast = (\tilde{x}e_1^\ast)^\bot\cap (\tilde{x}e_2^\ast)^\bot\cap (e_1\tilde{y}^\ast)^\bot\cap (e_2\tilde{y}^\ast)^\bot .$$ Applying $\Phi$ and using a straightforward consequence of~\eqref{eq:rk-1-perp} that two rank-one $R_1,R_2$ are linearly dependent if and only if $R_1^\bot = R_2^\bot$  (so $\Phi(\CC xy^\ast)=\CC \Phi(xy^\ast)$), we see that
\begin{align*}
\CC\Phi(xy^\ast)&=\Phi(\tilde{x}e_1^\ast)^\bot\cap \Phi(\tilde{x}e_2^\ast)^\bot\cap \Phi(e_1\tilde{y}^\ast)^\bot\cap \Phi(e_2\tilde{y}^\ast)^\bot\\
&=(\phi(\tilde{x})e_1^\ast)^\bot\cap (\phi(\tilde{x})e_2^\ast)^\bot\cap (e_1\psi(\tilde{y})^\ast)^\bot\cap (e_2\psi(\tilde{y})^\ast)^\bot.
\end{align*}
 Now, since $\phi\colon\CC^2\to\CC^2$ maps perpendicular pair $(x,\tilde{x})$ to perpendicular pair, we get  
$$(\phi(\tilde{x})e_1^\ast)^\bot=\CC\phi(x)e_1^\ast +\CC^2 \otimes e_2,$$ and likewise for other sets in the intersection. Hence, the intersection of the first two sets is $\phi(x)(\CC e_1+\CC e_2)^\ast$. Combining with the intersection of the last two sets we easily derive the result. 
\end{proof}
We will  denote shortly $\Phi|_{{\mathcal R}_1}=\gamma(\cdot)\,\phi\otimes \psi$. 

\begin{lemma}\label{lem:phi=psi}
   $\phi=\psi$, modulo multiplication by a scalar valued function. Consequently,
   $$\Phi|_{{\mathcal R}_1}=\gamma(\cdot)\phi\otimes\phi.$$
\end{lemma}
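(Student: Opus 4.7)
The plan is to exploit the normalization $\Phi(I)\dot{=}I$ (already achieved above) together with the strong preservation of BJ orthogonality. First I will identify which rank-one matrices are BJ orthogonal to $I$. Since $xy^*$ attains its norm only on scalar multiples of $y$, we have $M_0(xy^*)=\CC y$, and Proposition~\ref{prop:M_0(A)} then yields
\begin{equation*}
    xy^*\perp I\ \Longleftrightarrow\ \langle (xy^*)y,\,Iy\rangle=0\ \Longleftrightarrow\ (y^*x)(y^*y)=0\ \Longleftrightarrow\ x\perp y.
\end{equation*}

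Next, because $\Phi$ is a BJ isomorphism with $\Phi(I)\dot{=}I$ and BJ orthogonality is homogeneous, $X\perp I$ iff $\Phi(X)\perp I$. Applying this to $X=xy^*$, where by Lemma~\ref{lem:M_2-phi} $\Phi(xy^*)=\gamma(xy^*)\phi(x)\psi(y)^*$ is again rank-one, the same criterion gives $\Phi(xy^*)\perp I$ iff $\phi(x)\perp\psi(y)$. Chaining the two equivalences produces the \emph{mixed orthogonality identity}
\begin{equation*}
    x\perp y\ \Longleftrightarrow\ \phi(x)\perp\psi(y),\qquad x,y\in\CC^2.
\end{equation*}

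In the final step I would fix a nonzero $y$ and read the mixed identity as an equality of subsets of $\CC^2$ in the variable $x$: the left-hand side is $y^\bot$, while the right-hand side is $\phi^{-1}(\psi(y)^\bot)$. Since $\phi$ is a bijection that preserves orthogonality in both directions, $\phi(y^\bot)=\phi(y)^\bot$, so applying $\phi$ to both sides gives $\phi(y)^\bot=\psi(y)^\bot$ inside $\CC^2$. In a two-dimensional space the orthogonal complement of a nonzero vector is a single line, so two such complements coincide only when the generating vectors are parallel. Hence $\phi(y)\dot{=}\psi(y)$ for every nonzero $y$, which means $\phi\dot{=}\psi$ and consequently $\Phi|_{\mathcal R_1}=\gamma(\cdot)\,\phi\otimes\phi$.

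I do not expect a genuine obstacle: the nontrivial preparation — namely, $\Phi(I)\dot{=}I$ and the factorization $\Phi(xy^*)\dot{=}\phi(x)\psi(y)^*$ — is already in place. The only point requiring care is correctly reading off the rank-one criterion $xy^*\perp I\Leftrightarrow x\perp y$ from Proposition~\ref{prop:M_0(A)}, which is immediate once $M_0(xy^*)=\CC y$ is recalled.
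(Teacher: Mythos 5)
Your proposal is correct and follows essentially the same route as the paper: both derive the criterion $xy^\ast\perp I\Leftrightarrow x\perp y$, transport it through $\Phi$ using $\Phi(I)\dot{=}I$ to get $x\perp y\Leftrightarrow \phi(x)\perp\psi(y)$, and then conclude from the one-dimensionality of orthocomplements in $\CC^2$ that $\phi\dot{=}\psi$. The paper phrases the last step by fixing $x$ and noting that $\phi(x')$ is the unique line perpendicular to $\phi(x)$, whereas you compare $\phi(y)^\bot$ and $\psi(y)^\bot$ as sets; these are the same argument.
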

\begin{proof}
Notice that $I$ is fixed, and a rank-one $xy^\ast \perp I$ if and only if $x^\ast y=0$. This is further equivalent to $\gamma(xy^\ast)\phi(x)\psi(y)^\ast=\Phi(xy^\ast)\perp\Phi(I)=I$, so that $x\perp y$  if and only if $\phi(x)\perp\psi(y)$. Now,  fix any nonzero $x$ and let $y=x'$ be perpendicular to it.  Then, $\phi(x)\perp\psi(x')$ but since $\phi$ maps perpendicular pairs to perpendicular pairs, and $\phi(x')$ is the only vector perpendicular to $\phi(x)$ (modulo scalar multiplication), we have $\psi(x')=\phi(x')$ (again modulo scalar multiplication), as claimed.
\end{proof}
Also, in the sequel we will assume, as we clearly may,  that $\phi$ maps normalized vectors into normalized  vectors.
\begin{lemma}\label{lem:M2(CC)-diag(1-1)}
$\Phi\left(\begin{smallmatrix}
    1 &0\\
    0 & -1
\end{smallmatrix}\right)=\left(\begin{smallmatrix}
    1 &0\\
    0 & -1
\end{smallmatrix}\right)$, up to a scalar multiple. Consequently, for every $\theta\in\RR$ there is $\tau \in\RR$ such that $\phi$  maps $e_1+e^{i\theta} e_2$ into $\CC(e_1+e^{i\tau} e_2)$.
\end{lemma}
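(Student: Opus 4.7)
The preparation before the statement shows that $\Phi$ fixes $I$ and every $E_{ij}$ modulo scalars. Since the diagonal matrices coincide with $E_{12}^\bot\cap E_{21}^\bot$ and unitaries are exactly the right-symmetric points (both classes preserved by any BJ isomorphism), $\Phi$ preserves the set of diagonal matrices and maps unitaries to scalar multiples of unitaries. Consequently $\Phi(D)$, with $D:=\left(\begin{smallmatrix}1&0\\0&-1\end{smallmatrix}\right)$, is a diagonal unitary up to scalar, so $\Phi(D)\,\dot{=}\,\diag(1,e^{i\alpha})$ for some $\alpha\in\RR$. The whole task reduces to pinning down $\alpha=\pi$.

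The strategy is to exploit the single relation $I\perp D$. A direct computation of
$$\|I+\lambda\,\diag(1,e^{i\alpha})\|=\max\bigl(|1+\lambda|,\,|1+\lambda e^{i\alpha}|\bigr)$$
shows that $I\perp\diag(1,e^{i\alpha})$ holds iff this maximum is $\ge 1$ for every $\lambda\in\CC$, equivalently iff the two open unit disks centered at $-1$ and $-e^{-i\alpha}$ are disjoint, equivalently iff $|e^{-i\alpha}-1|\ge 2$, i.e.\ $\alpha\equiv\pi\pmod{2\pi}$. Since $I\perp D$ is immediate (take $\alpha=\pi$ in the calculation) and is preserved by $\Phi$ (using $\Phi(I)\,\dot{=}\,I$), we conclude $\alpha=\pi$ and $\Phi(D)\,\dot{=}\,D$.

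For the consequence, I would apply \eqref{eq:rk-1-perp} to the rank-one $R=vv^\ast$ with $v=e_1+e^{i\theta}e_2$: since $v^\ast D v=|v_1|^2-|v_2|^2=0$, we get $R\perp D$. By Lemma~\ref{lem:phi=psi}, $\Phi(R)=\gamma(R)\,\phi(v)\phi(v)^\ast$, and combining this with $\Phi(D)\,\dot{=}\,D$ and the strong BJ-preservation yields $\phi(v)\phi(v)^\ast\perp D$, i.e.\ $|\phi(v)_1|=|\phi(v)_2|$. Since $\phi$ is bijective on lines, $\phi(v)\ne 0$, and hence both components of $\phi(v)$ must be nonzero and of equal modulus, so $\phi(v)\in\CC(e_1+e^{i\tau}e_2)$ for a suitable $\tau\in\RR$.

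The main obstacle is less technical than conceptual: recognizing that, among all diagonal unitaries, $D$ is singled out (up to scalar) by the single orthogonality relation $I\perp D$. Once that is seen, no further structural input is required: everything follows from orthogonality to the already-fixed $I$ together with the rank-one description of $\Phi$ supplied by Lemma~\ref{lem:phi=psi}.
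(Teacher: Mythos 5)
Your argument is correct. The overall skeleton matches the paper's: both first observe that $\Phi$ preserves the set of diagonal matrices (as $E_{12}^\bot\cap E_{21}^\bot$) and maps scalar multiples of unitaries to scalar multiples of unitaries (right-symmetric points), so $\Phi(D)\dot{=}\diag(1,e^{i\alpha})$, and then pin down $\alpha=\pi$ by an orthogonality relation against an already-fixed element. Where you diverge is the choice of that relation: you use $I\perp D$ and the elementary disk computation showing that, among diagonal unitaries $\diag(1,e^{i\alpha})$ taken modulo scalars, orthogonality to $I$ forces $\alpha\equiv\pi$. The paper instead characterizes $\CC D$ among diagonal unitaries by the existence of a nonzero $x$ with $xx^\ast\perp D$ (equivalently $0\in W(D)$), and transports this through $\Phi$ using $\Phi(xx^\ast)\dot{=}\phi(x)\phi(x)^\ast$. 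Your route leans only on $\Phi(I)\dot{=}I$ and homogeneity of BJ orthogonality, and is arguably more self-contained at this point; the paper's route has the side benefit that the same relation $xx^\ast\perp D$ immediately yields the ``consequently'' clause, which you then recover separately by essentially the same computation ($v^\ast Dv=0$ for $v=e_1+e^{i\theta}e_2$, pushed through Lemma~\ref{lem:phi=psi}). Both treatments of the second claim are identical in substance, and all the steps you use ($\Phi(I)\dot{=}I$, $\Phi(E_{ij})\dot{=}E_{ij}$, the rank-one formula \eqref{eq:rk-1-perp}, and Lemma~\ref{lem:phi=psi}) are indeed available at this stage of the paper.
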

\begin{proof}
Notice that $\left(\begin{smallmatrix}
    1 &0\\
    0 & -1
\end{smallmatrix}\right)$ is a diagonal unitary matrix. Such matrices are mapped onto themselves by $\Phi$ modulo a scalar multiple (scalar multiples of unitary matrices are exactly the right-symmetric elements in $M_2(\CC)$, by~\cite[Theorem 2.5]{turnsek2005}). Notice also that a diagonal unitary $D$ belongs to $\CC\left(\begin{smallmatrix}
    1 &0\\
    0 & -1
\end{smallmatrix}\right)$ if and only if the sum of its eigenvalues vanishes, or equivalently, if and only if $0\in W(D)$, the numerical range of $D$. This is further equivalent to the existence of a nonzero $x$ with $x^\ast Dx=0$ or, equivalently, to $xx^\ast\perp D$. Now apply $\Phi$ to deduce that there exists nonzero $y=\phi(x)$ such that $yy^\ast$ is orthogonal to a multiple of diagonal unitary $\Phi(D)$. Thus, $\Phi(D)\in\CC\left(\begin{smallmatrix}
    1 &0\\
    0 & -1
\end{smallmatrix}\right)$.
The last claim follows since, after a trivial calculation, each nonzero vector $x$ with $x^\ast \left(\begin{smallmatrix}
    1 &0\\
    0 & -1
\end{smallmatrix}\right)x=0$ takes the form $x\in\CC(e_1+e^{i\theta} e_2)$ for some $\theta\in\RR$.
\end{proof}

Now, the vector of the form $x=e_1+e^{i\theta} e_2$ satisfies $xx^\ast\bot J$ if and only if $e^{i\theta}=\pm i$. Since $\Phi$ fixes $J$, we get that the two vectors $e_1\pm i e_2$ are mapped  to each other (modulo scalars).

Within the next  lemma we will be using a generalized version of SVD, $A=\sigma_1 xy^\ast+\sigma_2 x'(y')^\ast$ whereby $x,x'$ and $y,y'$ form, as usual, an orthonormal basis, however,  we allow $\sigma_1,\sigma_2$ to be any complex numbers instead of usually $\sigma_1\ge \sigma_2\ge 0$. Such decomposition will be briefly denoted as gSVD.
\begin{lemma}\label{lem:M_2(C)-A} Let $\phi\colon\CC^2\to\CC^2$ be a bijection which maps orthonormal pairs onto orthonormal pairs and such that $\Phi(xy^\ast)\dot{=}\phi(x)\phi(y)^\ast$.
    Let $A=xy^\ast+\sigma x'(y')^\ast$ be a gSVD, with $|\sigma|<1$. Then, there exists $|\tau|<1$ such that 
    $$\Phi(A)\dot{=}\phi(x)\phi(y)+\tau \phi(x')\phi(y')^\ast.$$
\end{lemma}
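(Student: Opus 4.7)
The plan is to recover $\Phi(A)$ (up to a scalar) from its BJ-orthogonality against rank-one matrices, and then to pin down the coefficient of the second summand by exploiting smoothness. If $\sigma=0$ then $A=xy^\ast$ is rank-one and the conclusion with $\tau=0$ is immediate, so I assume $\sigma\neq 0$; then $\rank A=2$ and, by Corollary~\ref{cor:Phi-preserves-adjacency}, $\rank\Phi(A)=2$.

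First I would establish the ansatz. By the orthonormality of $(x,x')$ and $(y,y')$, direct computation gives $(x')^\ast Ay = 0$ and $x^\ast Ay' = 0$, so by~\eqref{eq:rk-1-perp}, $x'y^\ast\perp A$ and $x(y')^\ast\perp A$. Applying the BJ isomorphism $\Phi$ and using the hypothesis $\Phi(uv^\ast)\dot{=}\phi(u)\phi(v)^\ast$, these translate to $\phi(x')^\ast\Phi(A)\phi(y)=0$ and $\phi(x)^\ast\Phi(A)\phi(y')=0$. Since $\{\phi(x),\phi(x')\}$ and $\{\phi(y),\phi(y')\}$ are orthonormal bases of $\CC^2$, expanding $\Phi(A)$ in the basis of four outer products $\phi(x_i)\phi(y_j)^\ast$, $i,j\in\{1,2\}$ (with the convention $x_1=x,x_2=x',y_1=y,y_2=y'$), and reading off the coefficients via $\phi(x_i)^\ast(\cdot)\phi(y_j)$, yields
$$\Phi(A)=\alpha\,\phi(x)\phi(y)^\ast+\beta\,\phi(x')\phi(y')^\ast$$
for some $\alpha,\beta\in\CC$. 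Because $\rank\Phi(A)=2$, necessarily $\alpha\neq 0$, so after normalising we may assume $\alpha=1$ and set $\tau:=\beta$.

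It remains to prove $|\tau|<1$. Since $|\sigma|<1$, $A$ has distinct singular values $1$ and $|\sigma|$ and is therefore smooth, with unique norm-attainer $y$ (modulo scalar) and $Ay=x$. By~\eqref{eq:smooth-rank-one}, $A^\bot=(xy^\ast)^\bot$; applying $\Phi$ (which maps $\bot$ to $\bot$) and using $\Phi(xy^\ast)\dot{=}\phi(x)\phi(y)^\ast$ together with scale-invariance of BJ-orthogonality in the first argument, I get
$$\Phi(A)^\bot=(\phi(x)\phi(y)^\ast)^\bot.$$
Moreover, smoothness is a purely BJ-theoretic property (Definition~\ref{def}), so $\Phi(A)$ is smooth; as its singular values are $1$ and $|\tau|$, this rules out $|\tau|=1$. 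If instead $|\tau|>1$, the dominant singular value of $\Phi(A)$ would be $|\tau|$, attained on $\phi(y')$ with $\Phi(A)\phi(y')=\tau\phi(x')$, and~\eqref{eq:smooth-rank-one} would give $\Phi(A)^\bot=(\phi(x')\phi(y')^\ast)^\bot$, contradicting the displayed identity since $\phi(x')\phi(y')^\ast$ and $\phi(x)\phi(y)^\ast$ are linearly independent rank-ones. Hence $|\tau|<1$, as required.

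The main obstacle is the last step: smoothness alone excludes only $|\tau|=1$, and separating $|\tau|>1$ from $|\tau|<1$ requires transporting the identity $A^\bot=(xy^\ast)^\bot$ through $\Phi$ in order to single out which of the two rank-one summands carries the dominant singular value.
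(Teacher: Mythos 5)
Your proof is correct and follows essentially the same route as the paper: both hinge on transporting the identity $A^\bot=(xy^\ast)^\bot$ through $\Phi$, on the fact that rank-one matrices with equal outgoing neighbourhoods are scalar multiples of one another, and on the observation that $\Phi(A)$ cannot be a scalar multiple of a unitary (you phrase this via smoothness, the paper via right-symmetry). Your preliminary derivation of the two-term ansatz from $x'y^\ast\perp A$ and $x(y')^\ast\perp A$ is just a more explicit version of what the paper reads off directly from the gSVD of $\Phi(A)$ in dimension two.
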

\begin{proof}
Since $|\sigma|<1$, then $A$ is not a multiple of unitary, hence not right symmetric, so $\Phi(A)$ is also not a multiple of unitary. Recall that if $B=uv^\ast+\sigma u' (v')^\ast$ is any matrix in  its gSVD which is  not a multiple of unitary, then $B^\bot=(uv^\ast)^\bot$.
Thus, 
$$A^\bot=(xy^\ast)^\bot,$$
and applying $\Phi$ gives
$$\Phi(A)^\bot = (\phi(x)\phi(y)^\ast)^\bot.$$
Since two rank-one matrices share the same outgoing neighborhood if and only if they are scalar multiples of each other, we see that the gSVD of $\Phi(A)$ equals
$$\Phi(A)\dot{=}\phi(x)\phi(y)^\ast +\tau \phi(x')\phi(y')^\ast $$
for some $|\tau|<1$, where we used the fact that $\phi$ maps orthonormal pairs $(x,x')$ and $(y,y')$  into orthonormal pairs. 
\end{proof}

Recall that $\phi(\CC x)=\CC x$, so $\phi$ induces a well-defined bijection $\tilde{\phi}$ on  a projective space $\PP(\CC^2):=\{[x]:=\CC x;\;\;x\in\CC^2\setminus\{0\}\}$.  We next show that the induced map strongly preserves the (quantum) angle $\frac{\pi}{4}$ among the lines in $\CC^2$. This will enable us to use Theorem 2.3. from \cite{GeherIMRN2020} for $n=2$, where such maps were classified. We remark that that there is a recent  upgrade of this theorem in case when $n\geqslant3$ and the angle is between $\left(\frac{\pi}{4},\frac{\pi}{2}\right)$; it  can be found in \cite{GeherMoriIMRN2022}.

\begin{lemma}
There exists a (conjugate)linear unitary $U$,  a scalar-valued function $\gamma\colon\CC^2\setminus\{0\}\to\CC\setminus\{0\}$, and a subset $\Omega\subseteq\CC^2\setminus\{0\}$ such that $\phi(x)=\gamma_x Ux$ for $x\in\Omega$  and $\phi(x)=\gamma_x Ux'$ for $x\notin\Omega$; here $x'$ is a nonzero vector perpendicular to $x$. 

\end{lemma}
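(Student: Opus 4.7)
The plan is to verify the hypothesis of Gehér's Theorem 2.3 from \cite{GeherIMRN2020} (with $n=2$ and angle $\alpha=\pi/4$) for the induced bijection $\tilde\phi\colon\PP(\CC^2)\to\PP(\CC^2)$, $\tilde\phi([x]):=[\phi(x)]$, well defined thanks to $\phi(\CC x)=\CC\phi(x)$. Concretely, I would show that $\tilde\phi$ strongly preserves the quantum angle $\pi/4$, whereupon Gehér's theorem yields a (conjugate)linear unitary $U$ and, on every projective line, a choice between $\tilde\phi([x])=[Ux]$ and $\tilde\phi([x])=[Ux^\perp]$ --- exactly the form stated in the lemma once we lift back to vectors via a suitable scalar function $\gamma$ and take $\Omega$ to be the set where the first alternative is selected.

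The key BJ-orthogonality characterization of the angle $\pi/4$ is the following: for any normalized $x\in\CC^2$ with any normalized $x'\perp x$, the traceless Hermitian unitary $A_x:=xx^*-x'(x')^*=2xx^*-I$ satisfies $u^*A_xu=|\langle u,x\rangle|^2-|\langle u,x'\rangle|^2$, and so by \eqref{eq:rk-1-perp},
$$[u] \text{ is at angle } \pi/4 \text{ from } [x] \iff uu^*\perp A_x.$$
Consequently, once one shows that $\Phi(A_x)\dot{=}A_{\phi(x),\phi(x')}:=\phi(x)\phi(x)^*-\phi(x')\phi(x')^*$ (where $\phi(x')$ is any normalized representative of $[\phi(x)]^\perp$, determined modulo a scalar by orthogonality preservation), the angle $\pi/4$ preservation follows directly, as $uu^*\perp A_x\iff\phi(u)\phi(u)^*\perp A_{\phi(x),\phi(x')}\iff [\phi(u)]$ is at angle $\pi/4$ from $[\phi(x)]$.

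To establish $\Phi(A_x)\dot{=}A_{\phi(x),\phi(x')}$ I would proceed in two stages. First, the class of traceless unitaries is $\Phi$-preserved: unitary multiples are exactly the right-symmetric points, hence preserved by $\Phi$, and $I^\perp$ is preserved since $\Phi(I)\dot{=}I$; in $M_2(\CC)$, every traceless unitary is a scalar multiple of $A_{y,y'}:=yy^*-y'(y')^*$ for some orthonormal pair $(y,y')$, so necessarily $\Phi(A_x)=cA_{y,y'}$ for some $c\neq 0$. Second, the pair $(y,y')$ must be identified, namely $\{[y],[y^\perp]\}=\{[\phi(x)],[\phi(x')]\}$. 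For this I would combine: (i) the BJ characterization just given, which forces $\tilde\phi$ to map the set of lines at angle $\pi/4$ from $[x]$ bijectively to the corresponding set for $[y]$; (ii) the direct computation that $A_x\perp A_\sigma\iff\sigma\in[0,1)$ for the family $A_\sigma:=xx^*+\sigma x'(x')^*$ with $|\sigma|<1$, which under $\Phi$ together with the gSVD Lemma~\ref{lem:M_2(C)-A} (applicable because $|\sigma|<1$) yields a precise BJ relation between $cA_{y,y'}$ and $C_{\tau(\sigma)}:=\phi(x)\phi(x)^*+\tau(\sigma)\phi(x')\phi(x')^*$; and (iii) expansion of this relation in the orthonormal basis $(\phi(x),\phi(x'))$, which forces the off-diagonal part of $A_{y,y'}$ in this basis to vanish, yielding the desired identification.

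The main technical obstacle is the pole-pair identification in stages (ii)--(iii): Lemma~\ref{lem:M_2(C)-A} does not apply to the unitary $A_x$ directly (we have $|\sigma|=1$ there), so the identification must be transferred indirectly through the nearby non-unitary matrices $A_\sigma$ where the lemma does apply. Attempting to identify the pole pair purely from equator-preservation is in fact circular, since the poles of a great circle on the Bloch sphere are characterized by having angular distance $\pi/2$ to every equator point --- which is the very condition we are trying to propagate. Once $\Phi(A_x)\dot{=}A_{\phi(x),\phi(x')}$ is obtained for every $[x]$, the angle $\pi/4$ preservation of $\tilde\phi$ follows, and Gehér's Theorem 2.3 completes the proof.
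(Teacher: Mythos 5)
Your overall strategy (reduce to Geh\'er's theorem by showing $\tilde\phi$ strongly preserves the angle $\pi/4$) is the same as the paper's, and your BJ characterization of that angle via the traceless unitary $A_x=xx^\ast-x'(x')^\ast$ (namely $uu^\ast\perp A_x\iff|u^\ast x|=\tfrac1{\sqrt2}$ for normalized $u,x$) is correct, as is your first stage showing that $\Phi$ preserves the class of scalar multiples of traceless unitaries (right-symmetric points intersected with $I^\bot$). However, your pole-pair identification in stages (ii)--(iii) has a genuine gap. Lemma~\ref{lem:M_2(C)-A} gives $\Phi(A_\sigma)\dot{=}C_{\tau(\sigma)}$ for \emph{some} $|\tau(\sigma)|<1$, but nothing whatsoever is known about the map $\sigma\mapsto\tau(\sigma)$: it is merely a bijection of the open unit disc (not known to be continuous, real-preserving, or order-preserving, since $\Phi$ is not assumed continuous). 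Hence the transferred statement is only ``$cA_{y,y'}\perp C_\tau$ exactly for $\tau$ in the set $\tau([0,1))$'', where $\tau([0,1))$ is an arbitrary continuum-sized subset of the disc. This places no constraint on the matrix of $A_{y,y'}$ in the basis $(\phi(x),\phi(x'))$, so the claimed vanishing of its off-diagonal part does not follow. (The reverse relation $A_\sigma\not\perp A_x$ only yields that the $(1,1)$ entry of $A_{y,y'}$ in that basis is nonzero, which is not enough since $a^2+|b|^2=1$ with $a\in\RR$ allows $a\neq0$ and $b\neq0$ simultaneously.)

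The gap is fixable, and more cheaply than via the $A_\sigma$ family: since $\Phi(x(x')^\ast)\dot{=}\phi(x)\phi(x')^\ast$ and $\Phi(x'x^\ast)\dot{=}\phi(x')\phi(x)^\ast$, the two-dimensional space $(x(x')^\ast)^\bot\cap(x'x^\ast)^\bot=\Span\{xx^\ast,x'(x')^\ast\}$ is mapped by $\Phi$ onto $\Span\{\phi(x)\phi(x)^\ast,\phi(x')\phi(x')^\ast\}$, which already forces $\Phi(A_x)$ to be ``diagonal'' in the $(\phi(x),\phi(x'))$ basis; combined with your first stage this gives $\Phi(A_x)\dot{=}A_{\phi(x),\phi(x')}$ and your argument then closes. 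This is essentially the mechanism of the paper's Lemma~\ref{lem:M2(CC)-diag(1-1)} in standard coordinates. The paper itself proceeds differently at this point: for each pair $(x,y)$ at angle $\pi/4$ it conjugates by a unitary sending $x\mapsto e_1$, $y\mapsto(e_1+e_2)/\sqrt2$, renormalizes the resulting BJ isomorphism into the standard position of Lemma~\ref{lem:summary}, and reads off $|\phi(x)^\ast\phi(y)|=\tfrac1{\sqrt2}$ from the already-proved fact that the induced vector map sends $e_1+e^{i\theta}e_2$ into $\CC(e_1+e^{i\tau}e_2)$; your intrinsic route via $A_x$ would be a clean alternative once the identification step is repaired.
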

\begin{proof}
 We claim that the lines $[x]:=\CC x,[y]:=\CC y$ meet at an angle $\frac{\pi}{4}$ (that is, $\frac{|x^\ast y| }{\|x\|\cdot\|y\|}=\frac{1}{\sqrt{2}}$) if and only if the same holds for their images $\phi([x]),\phi([y])$.

To this end we can scale  their representatives $x,y$ to achieve that they are normalized and that $x^\ast y=\frac{1}{\sqrt{2}}$. Then,  there exists a unitary $U$ such that $Ux=e_1$ and  $Uy=\frac{e_1+e_2}{\sqrt{2}}$ (the first column of $U^\ast $ is $x$ and the second one is $\sqrt{2}y-x$; one computes that this is a unitary with the desired properties).
Notice that the rank-one matrix $U^\ast E_{11} U$ is mapped by BJ isomorphism $\Phi$ again into rank-one, so there exist unitary $U_1,V_1$ such that $U_1\Phi(U^\ast E_{11} U)V_1^\ast\dot{=} E_{11}$. Consider the BJ isomorphism  
$$\Phi_1\colon X\mapsto U_2\Bigl(\bigl(U_1\Phi(U^\ast XU)V_1^\ast\bigr)^\dagger\Bigr)V_2^\ast$$
where $\dagger$ denotes either the identity map or the adjoint map, and will be specified in a moment, and where $U_2,V_2$ are diagonal unitary matrices, which will also be specified in a moment. Notice that, whatever choice we make for the map $\dagger$ and for diagonal unitaries $U_2,V_2$, $\Phi_1$ will always  map $E_{11}$ into $\CC E_{11}$.  So, by Lemma~\ref{lem:n=2-E_(11)}, it will also map $E_{12}$ either into $\CC E_{12}$ or into $\CC E_{21}$. With appropriate choice of $\dagger$ we achieve that $\Phi_1(E_{12})\in\CC E_{12}$. Then, as shown in a text between the lemmata \ref{lem:n=2-E_(11)} and~\ref{lemorthogonality-of-xz-and-yz} (c.f. also a summary in Lemma~\ref{lem:summary}), $\Phi_1$ fixes every $E_{ij}$, modulo a scalar multiple, and hence maps diagonal/antidiagonal matrices onto itself and we can prescribe diagonal unitaries  $U_2,V_2$ such that $\Phi_1$ fixes matrices $I$ and $J$, modulo scalar multiples.

By lemmata~\ref{lem:M_2-phi}--\ref{lem:M2(CC)-diag(1-1)} there exists a bijection $\phi_1\colon\CC^2\to\CC^2$, which takes normalized vectors to normalized vectors, maps $e_1,e_2$ into their scalar multiples (due to $\Phi_1(E_{ij})\dot{=}E_{ij}$) and maps $e_1+e^{i\theta}e_2$ into a scalar multiple of $e_1+e^{i\tau}e_2$, such that
$$\Phi_1(uv^\ast)\dot{=}\phi_1(u)\phi_1(v)^\ast.$$
It follows that
\begin{align*}
e_1\tfrac{(e_1+e^{i\tau} e_2)^\ast}{\sqrt{2}}
&\dot{=}\phi_1(e_1)\phi_1(\tfrac{e_1+e_2}{\sqrt{2}})^\ast\dot{=}\Phi_1(e_1\tfrac{(e_1+e_2)^\ast}{\sqrt{2}})\\
&=U_2(U_1\Phi(U^\ast e_1\tfrac{(e_1+e_2)^\ast}{\sqrt{2}} U)V_1^\ast)^\dagger V_2^\ast=U_2(U_1\Phi(xy^\ast)V_1^\ast)^\dagger V_2^\ast\\
&\dot{=}
U_2(U_1\phi(x)\phi(y)^\ast V_1^\ast)^\dagger V_2^\ast.
\end{align*}
Since $uv^\ast =zw^\ast$ if and only if $u,z$ and $v,w$ are linearly dependent, and since $U_i,V_j$ are unitaries, we see that
 $|\phi(x)^\ast \phi(y)|=\left|e_1^\ast\tfrac{(e_1+e^{i\tau} e_2)}{\sqrt{2}}\right|=\frac{1}{\sqrt{2}}$ (if $\dagger$ is identity map), or else $|\phi(y)^\ast \phi(x)|=\left|e_1^\ast\tfrac{(e_1+e^{i\tau} e_2)}{\sqrt{2}}\right|=\frac{1}{\sqrt{2}}$ (if $\dagger$ is adjoint map).
 In both cases, the initial assumption $|x^\ast y|=\frac{1}{\sqrt{2}}$ implies $|\phi(x)^\ast\phi(y)|=|x^\ast y|=\frac{1}{\sqrt{2}}$.

 The converse implication, that from $|\phi(x)^\ast\phi(y)|=\frac{1}{\sqrt{2}}$ we can derive $|x^\ast y|=\frac{1}{\sqrt{2}}$, follows by considering the BJ isomorphism $\Phi^{-1}$.

 Now $\phi$ induces a well-defined projective-space bijection
 $\tilde{\phi}\colon\PP(\CC^2)\to\PP(\CC^2)$, which obviously strongly preserves the angle $\frac{\pi}{4}$. Such bijections were classified by Geh\'er in  \cite[Theorem 2.3]{GeherIMRN2020} (in a generalization of a paper by  Li-Plevnik-\v Semrl~\cite{li2012preservers}), where they considered the same problem on Hilbert spaces of dimension at least five.  The result hence follows by Geh\'er's \cite[Theorem 2.3]{GeherIMRN2020}.
\end{proof}

We can replace $\Phi$ by a map $X\mapsto U^\ast\Phi(X)U$ to achieve that  $\phi(x)\dot{=} x$ for $x\in\Omega$ and  $\phi(x)\dot{=}x'$ for $x\notin\Omega$. 
Notice also that orthocomplementation is a conjugate-linear isometric  map on $\CC^2$, given by
$$\Pi(\alpha e_1+\beta e_2)=-\overline{\beta} e_1+\overline{\alpha}e_2.$$
It is easily seen that its adjoint equals   $\Pi^\ast=-\Pi$ so is also a conjugate-linear isometry. As such, $\Pi$  induces a conjugate-linear  BJ isomorphism $X\mapsto\Pi X\Pi^\ast$ on $M_2(\CC)$ and by composing it with $\Phi$ we get a BJ isomorphism which maps $xy^\ast$ into $(\Pi\circ\phi)(x)((\Pi\circ\phi)(y))^\ast$ (modulo a scalar multiple). This way (that is, by replacing, if needed, $\Phi$ with $\Pi\circ\Phi\circ \Pi^\ast$)  we can achieve 
that
$$\phi(e_i)\dot{=}e_i,\;(i=1,2),\quad\hbox{ and }\quad\phi(x)\dot{=}\begin{cases}x; & x\in\Omega\\
x'; & x\notin\Omega\end{cases}.$$
Observe that $\Phi(e_ie_j^\ast)\dot{=}\phi(e_i)\phi(e_j)^\ast\dot{=}e_ie_j$.

\begin{lemma}\label{lem:phi-identity} $\phi$ is the  identity  (modulo a scalar-valued function).
\end{lemma}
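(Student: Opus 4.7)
The plan is to show $\Omega = \CC^2\setminus\{0\}$, from which the lemma follows immediately. First I would record two easy preliminaries: $\Omega$ is closed under nonzero scalars (because $\phi(\CC x)=\CC\phi(x)$) and under orthogonal complementation. For the second, if $x_0\notin\Omega$ while $x_0'\in\Omega$, then $\phi(x_0)\dot{=}x_0'$ and $\phi(x_0')\dot{=}x_0'$, so orthogonality preservation applied to $x_0\perp x_0'$ would force $x_0'\perp x_0'$, which is impossible. In particular $e_1, e_2 \in \Omega$.

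The main move would be a bootstrap claim: for any $y_0\in\Omega$ and any normalized $x$ with $t:=|y_0^* x|^2 < 1/2$, one has $x\in\Omega$. To prove it, I would set $\sigma:=-t/(1-t)$ (so $|\sigma|<1$) and form the gSVD matrices
$$A := y_0 y_0^* + \tfrac12\,y_0'(y_0')^*,\qquad B := xx^* + \sigma\,x'(x')^*.$$
A direct computation gives $y_0^* B y_0 = t + \sigma(1-t)=0$, so Proposition~\ref{prop:M_0(A)} supplies $A\perp B$. Since $y_0, y_0'\in\Omega$, Lemma~\ref{lem:M_2(C)-A} gives $\Phi(A)\dot{=}y_0 y_0^* + \tau\,y_0'(y_0')^*$, which still attains its norm at $y_0$. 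If, toward contradiction, $x\notin\Omega$, then $x'\notin\Omega$ too, so the same lemma forces $\Phi(B)\dot{=}x'(x')^* + \tau_B\,xx^*$ with $|\tau_B|<1$. The condition $\Phi(A)\perp\Phi(B)$ then reads $y_0^*\Phi(B)y_0 = (1-t)+\tau_B t = 0$, so $\tau_B = -(1-t)/t$, whose modulus exceeds $1$ because $t<1/2$ — contradicting Lemma~\ref{lem:M_2(C)-A}.

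The conclusion would follow by two passes of this bootstrap. Starting with $y_0\in\{e_1, e_2\}\subseteq\Omega$ handles every normalized $x$ with $|x_1|\neq|x_2|$, i.e., every vector off the equator $|x_1|=|x_2|=1/\sqrt 2$. For an equatorial $x=(e_1+e^{i\theta}e_2)/\sqrt 2$ I would take $y_0=(2 e_1+\mu e_2)/\sqrt 5$ with $\mu\in\{1, -1, i, -i\}$; each such $y_0$ has $|y_{0,1}|\neq|y_{0,2}|$ and hence lies in $\Omega$ after the first pass. A short computation gives $|y_0^* x|^2 = (5 + 4\,\Re(\bar\mu e^{i\theta}))/10$, and for every $\theta\in\RR$ at least one of the four choices of $\mu$ makes $\Re(\bar\mu e^{i\theta}) < 0$, hence $t<1/2$, so the bootstrap applies again.

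The hard part will be precisely this equatorial case: the axis vectors yield $|y_0^* x|^2 = 1/2$ exactly on the equator, so they fail the strict inequality the bootstrap demands. The remedy sketched above — first enlarge $\Omega$ using $e_1, e_2$, then redeploy nondegenerate elements of $\Omega$ to cover the equator — is the conceptual crux.
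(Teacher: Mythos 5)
Your proof is correct, and it takes a genuinely different route from the paper's. The paper argues through the one-parameter family $y=e_1+se^{i\theta}e_2$: it tests the rank-one matrix $xy^\ast$ (with $x=e_1+\mu e_2$ of known $\phi$-behaviour) against diagonal matrices $\mathrm{diag}(1,\sigma)$, splits into the cases ``$x$ fixed'' versus ``$x$ flipped'', shows that each alternative propagates to all non-equatorial $y$, and finally eliminates the residual configuration (everything flipped except the axes) by an explicit numerical example with $u=(3/5,4/5)^\ast$, $v=(5/13,12/13)^\ast$. You instead work directly with the set $\Omega$, observe that it is closed under nonzero scalars and under orthocomplementation, and prove a quantitative propagation rule --- membership of $y_0$ in $\Omega$ forces membership of every normalized $x$ with $|y_0^\ast x|^2<\tfrac{1}{2}$ --- by pitting the two rank-two matrices $y_0y_0^\ast+\tfrac{1}{2}\,y_0'(y_0')^\ast$ and $xx^\ast+\sigma x'(x')^\ast$ against each other; two passes of this rule then exhaust $\CC^2$. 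Both arguments run on the same engine, namely Lemma~\ref{lem:M_2(C)-A} and the constraint $|\tau|<1$, but yours buys a cleaner structure: no fixed/flipped case split, no ad hoc terminal counterexample, and a transparent geometric criterion (angle strictly exceeding $\pi/4$) for propagation. Two small points are worth making explicit in a write-up: closure of $\Omega$ under orthocomplementation needs both implications (you prove $x'\in\Omega\Rightarrow x\in\Omega$; the converse follows by applying this to $x'$ in place of $x$ and using $(x')'\dot{=}x$ together with scalar-closure), and in the degenerate case $t=0$ the formula $\tau_B=-(1-t)/t$ is vacuous, but there $x\dot{=}y_0'\in\Omega$ outright, so the bootstrap is not needed.
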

\begin{proof}
{\bf Case~1}. There exist a unimodular number $\mu=e^{i\xi},\xi\in\mathbb{R}$ such that $x:=e_1+\mu e_2$ satisfies $$\phi(x)\dot{=}x.$$
Consider any 
\begin{equation}\label{eq:y}
    y=e_1+se^{i\theta} e_2;\qquad 
s>0, s\neq 1,\;\hbox{ and }\;\theta \in\mathbb{R}.
\end{equation}
We claim that $\phi(y)\dot{=}y$. 

To verify the claim  assume otherwise that  $\phi(y)\dot{=}y'$, the orthocomplement of $y$. Suppose first that $s>1$, define $\sigma:=\frac{-e^{i(\xi-\theta)}}{s}$ (so  $|\sigma|<1$) and  form $$A=\left(\begin{smallmatrix}
    1 &0\\
    0 & \sigma
\end{smallmatrix}\right)= e_1e_1^\ast+ \sigma e_2e_2^\ast.$$
Notice that $x^\ast Ay=0$, or equivalently, that $xy^\ast \perp A$. Applying BJ isomorphism~$\Phi$ this gives $x(y')^\ast\perp \Phi(A)$, or equivalently, $x^\ast\Phi(A)y'=0$. Now, by Lemma~\ref{lem:M_2(C)-A}, $\Phi(A)\dot{=}\phi(e_1)\phi(e_1)^\ast + \tau \phi(e_2)\phi(e_2)^\ast=\left(\begin{smallmatrix}
    1 &0\\0&\tau
\end{smallmatrix}\right)$   for some $\tau$ with $|\tau|< 1$. Since $y'=-s e^{-i\theta} e_1+e_2$, one easily computes  that  
$$0=x^\ast \Phi(A)y'=-s e^{-i\theta}+\tau  e^{-i\xi} $$
so that $|\tau|=|s|>1$, a contradiction. 

Suppose next $0<s<1$. Then, by considering $\left(\begin{smallmatrix}
     \frac{1}{\sigma} &0\\
    0 &1
\end{smallmatrix}\right)$ instead of the previous $A$ (with $\sigma:=\frac{-e^{i(\xi-\theta)}}{s}$ same as before), one again gets a contradiction.

{\bf Case~2}. There exist a unimodular number $\mu$ such that $x:=e_1+\mu e_2$ satisfies $$\phi(x)\dot{=}x'=e_1-\overline{\mu}e_2.$$
Following the previous arguments  one sees that it is impossible to have  $\phi(y)\dot{=}y$ for some $y$ of the form~\eqref{eq:y}. So that, in this case, $\phi(y)\dot{=}y'$ for every such $y$.

\medskip

The two cases also show that if for some unimodular $\mu$ we have $\phi(x_{\mu})\dot{=}x_{\mu}$, where $x_{\mu}=e_1+\mu e_2$, then this holds for every unimodular $\mu$: In fact, 
by Case~1, $\phi(e_1+2 e_2)\dot{=}e_1+2e_2$. But, if, for some other unimodular $\nu$ we would have $\phi(x_{\nu})\dot{=}x_{\nu}'$, then, by Case~2, $\phi(e_1+2 e_2)\dot{=}(e_1+2e_2)'=e_1-\frac{1}{2} e_2$. The two are incompatible.

 Therefore, either $\phi(e_1+r e^{i\xi} e_2) \dot{=}e_1+r e^{i\xi} e_2$ for every $r>0$ and every $\xi\in\mathbb{R}$  or else $\phi(e_1+r e^{i\xi} e_2) \dot{=}(e_1+r e^{i\xi} e_2)'$ for every $r>0$ and every $\xi\in\mathbb{R}$.  In the former case, by our initial assumption of $\phi(e_i)\dot{=}e_i$, we see that $\phi(z)\dot{=}z$ for every $z\in\CC^2$, so $\phi$ is an identity (modulo a scalar-valued function).

In  the later case, we temporarily once again compose BJ isomorphism $\Phi$ by orthocomplementation $\Pi\circ \Phi(\cdot)\circ\Pi^\ast$ to achieve that $\phi(z)\dot{=}z$  for every $z\in\CC^2\setminus(\CC e_1\cup\CC e_2)$, while $\phi(e_1)\dot{=}e_1'=e_2$ and $\phi(e_2)\dot{=}e_2'=-e_1$. We claim this is impossible:
  Consider here $A=uv^*$, where $u=\left(\frac{3}{5},\frac{4}{5}\right)^\ast$ and $v=\left(\frac{5}{13},\frac{12}{13}\right)^\ast$. Then $uv^*\perp (e_1e_1^*+\lambda e_2e_2^*)$ if and only if $\lambda=-\frac{5}{16}$. Considering their $\Phi$-images we would have, by Lemma~\ref{lem:M_2(C)-A}, $uv^*\perp e_2e_2^*+\sigma e_1e_1^*$ for some $|\sigma|<1$. But the only $\sigma$ that satisfies this relation is $\sigma=-\frac{16}{5}$ which is in contradiction with $|\sigma|<1$.
\end{proof}

 \begin{proof}[Proof of Theorem~\ref{thm:simplecase} for $n=2$]
   By Lemma~\ref{lem:0classifion} and lemmata~\ref{lem:summary}--\ref{lem:phi-identity} we can compose $\Phi$ with (conjugate)linear isometries $X\mapsto X^\ast$ and  $X\mapsto UXV^{\ast}$ where $U,V$ are either both unitary or are both conjugate-linear unitary, to achieve that the changed map, which we again denote by $\Phi$, is a BJ isomorphism that fixes every  matrix of rank at most one, modulo  a scalar multiple. The result then follows from Lemma~\ref{lem:A=B}, applied on $B=\Phi(A)$.
  \end{proof}
  \bigskip

\section{Proof of the main theorem}
 
 To prove the first theorem, we will require an additional lemma.

 \begin{lemma}\label{lem:gamma1=gamma2}    Let $\A=\bigoplus_1^\ell M_{n_i}(\CC)$ and let $x_ix_i^\ast\in 0\oplus M_{n_i}(\CC)\oplus 0$ and $x_jx_j^\ast\in 0\oplus M_{n_j}(\CC)\oplus 0$ be norm-one rank-one matrices. If $i\neq j$,  then there exists a unique unimodular number $\mu=-1$ such that $(x_ix_i^\ast\oplus\mu x_jx_j^\ast)\perp I$. 
 \end{lemma}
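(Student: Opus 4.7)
My plan is to apply Proposition~\ref{prop:M_0(A)} with $A:=x_ix_i^\ast\oplus\mu x_jx_j^\ast$ and $B:=I$. Since $\|x_i\|^2=\|x_ix_i^\ast\|=1$, and similarly $\|x_j\|=1$, both nonzero blocks of $A$ have norm $1$, so $\|A\|=\max(1,|\mu|)=1$ whenever $|\mu|=1$. According to the Proposition, $A\perp I$ is equivalent to the existence of a normalized vector $x\in M_0(A)$ with $\langle Ax,x\rangle=0$.

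Next I would determine $M_0(A)$ explicitly. Writing $x=\bigoplus_k y_k$ with $y_k\in\CC^{n_k}$, a short calculation gives
\[
\|Ax\|^2=|x_i^\ast y_i|^2+|\mu|^2|x_j^\ast y_j|^2=|x_i^\ast y_i|^2+|x_j^\ast y_j|^2.
\]
Since $\|x_i\|=\|x_j\|=1$, Cauchy--Schwarz forces $|x_i^\ast y_i|\le\|y_i\|$ and $|x_j^\ast y_j|\le\|y_j\|$, with equality only when $y_i\in\CC x_i$ and $y_j\in\CC x_j$. For $\|Ax\|=\|x\|$, the remaining blocks must vanish. Hence a normalized element of $M_0(A)$ has the form $x=\alpha x_i\oplus\beta x_j$ (embedded in the $i$-th and $j$-th summands) with $|\alpha|^2+|\beta|^2=1$.

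For such $x$ we compute $Ax=\alpha x_i\oplus\mu\beta x_j$, so
\[
\langle Ax,x\rangle=|\alpha|^2\|x_i\|^2+\mu|\beta|^2\|x_j\|^2=|\alpha|^2+\mu|\beta|^2.
\]
Setting this equal to zero yields $\mu=-|\alpha|^2/|\beta|^2$, which is a nonpositive real number; combined with $|\mu|=1$, this forces $\mu=-1$ and $|\alpha|^2=|\beta|^2=\tfrac12$. Conversely, when $\mu=-1$ the choice $\alpha=\beta=1/\sqrt{2}$ produces such an $x$, confirming $A\perp I$. This proves both existence and uniqueness of the unimodular $\mu$, with the explicit value $\mu=-1$.

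I do not anticipate a real obstacle here; the argument is a direct application of the norm-attainment characterization of BJ orthogonality from Proposition~\ref{prop:M_0(A)}, coupled with the simple observation that a convex combination of two nonnegative quantities weighted by a unimodular scalar $\mu$ vanishes only when $\mu=-1$.
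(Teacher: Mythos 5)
Your proof is correct and follows essentially the same route as the paper: both use the norm-attainment characterization of Proposition~\ref{prop:M_0(A)}, identify $M_0(A)$ as the span of $x_i$ and $x_j$ sitting in their respective summands, and reduce to the equation $|\alpha|^2+\mu|\beta|^2=0$, which a unimodular $\mu$ can satisfy only for $\mu=-1$. Your write-up is in fact slightly more complete, since you verify the description of $M_0(A)$ via Cauchy--Schwarz and spell out the existence direction ($\alpha=\beta=1/\sqrt2$), which the paper leaves implicit.
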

 \begin{proof}
    We can consider $\A$ embedded into $M_N(\CC)$, $N=n_1+\dots+n_\ell$. Consider $x_i$ as vector in $0\oplus \CC^{n_i}\oplus 0$ and similarly for $x_j$. Then, from $i\neq j$ we know that $x_i$ and $x_j$ are perpendicular.  Since $(x_ix_i^\ast\oplus \mu x_jx_j^\ast)=: R\perp I$, there exist a normalized vector $z$, where $R$ achieves its norm, so that $\langle Rz,Iz\rangle=z^\ast Rz=0$. Now, $R$ achieves its norm on $\Span\{x_i,x_j\}$ and since $z$ is normalized and $x_i,x_j$ are orthonormal, there exist $c,s\in\CC$ such that 
 $$z=c x_i\oplus s x_j;\qquad |c|^2+|s|^2=1.$$ 
 Then, 
 $$z^\ast Rz=(cx_i\oplus sx_j)^\ast (cx_i \oplus \mu sx_j) =|c|^2(x_i^\ast x_i)+\mu|s|^2(x_j^\ast x_j),$$
so $z^\ast Rz=0$  if and only if $\mu=-\frac{|c|^2}{|s|^2}< 0.$
Since $\mu$ is unimodular, $\mu=-1$.
\end{proof}

\begin{proof}[Proof of Theorem~\ref{thm:genera}.]
Since a finite-dimensional $C^*$-algebra $\A$ has no abelian summand we have $2\le \dim\A<\infty$.
Within our recent paper~\cite[Theorem 1.1]{kuzma_singla_2025} we showed that in this case, if $\Phi\colon\A\to\B$ is a BJ isomorphism between complex $C^*$-algebras, then $\A$ and $\B$ are isometrically $\ast$-isomorphic and we can identify  $\A=\B$. Moreover, by using the properties of smooth and of relative left-symmetric points, we further showed in~\cite[Corollary 2.9]{kuzma_singla_2025}  that $\Phi$ maps elements which belong to a single block of $\A$ onto matrices which belong to a single block of $\B$, and the sizes of the blocks are the same. Consequently,  by writing 
$$\A=\bigoplus_1^\ell M_{n_i}(\CC),$$
and identifying individual blocks with $0\oplus M_{n_i}(\CC)\oplus 0=M_{n_i}(\CC)$, there exists a permutation  of block constituents $\sigma\colon\{1,\dots,\ell\}\to\{1,\dots,\ell\}$, such that
$$n_i=n_{\sigma(i)}\quad\hbox{ and }\quad
\Phi_i:=\Phi|_{M_{n_i}(\CC)}\colon M_{n_i}(\CC)\to M_{n_{\sigma(i)}}(\CC).$$
By~\cite[Corollary 2.9]{kuzma_singla_2025}, $\Phi_i$ is a bijection; by the  definition of BJ orthogonality it   clearly strongly preserves  BJ orthogonality. Such BJ isomorphisms were classified within  Theorem~\ref{thm:simplecase} and take the form~\eqref{eq:conjugate-linear_isometries}, modulo multiplication by a scalar-valued function  (note  $n_i\ge 2$ since $\A$ has no abelian summand).  Observe also that 
the permutation of indices $\sigma$ induces an isometry on $\A=\B$  by
$$\sigma\colon\bigoplus X_i\mapsto \bigoplus X_{\sigma(i)}.$$
Hence, by Theorem~\ref{thm:simplecase}, there exist  isometries $\Psi_i\colon M_{n_i}(\CC)\to M_{n_i}(\CC)$ such that, upon identifying $M_{n_i}(\CC)=M_{n_{\sigma(i)}}(\CC)$, we have
$\Phi_i(X)\dot{=}\Psi_i(X)$. 
Then, 
$$\Psi:=\sigma\circ(\Psi_1\oplus\dots\oplus\Psi_\ell)^{-1}$$ is an isometry of $\A$ and $\Psi^{-1}\circ\Phi$  is a BJ isomorphism which fixes every rank-one element of $\A$, modulo a scalar multiple. We denote it again by $\Phi$. 

Choose now an arbitrary $A=\bigoplus A_i\in\A$ and let $$B=\bigoplus B_i:=\Phi(A).$$ Applying Lemma~\ref{lem:A=B} on each individual block reveals that $B_i=\gamma_i A_i$ for some nonzero scalar $\gamma_i=\gamma_i(A_i)$. 
Therefore
$$\Phi(A)=\Gamma(A)A$$
for some function $\Gamma\colon \A\to Z(\A)=\bigoplus\CC I_{n_i}$, the center of $\A$, such that $\Gamma(A)$ is invertible for every $A$. It remains to show that $\Gamma(A)=\gamma(A)P(A)$, where $\gamma(A)\in\CC$ is unimodular, and $P(A)\in Z(\A)$ is positive definite. To ease the flow of arguments, we proceed in steps.

{\bf Step 1}. \textit{$A$ and $B=\Phi(A)$ attain their norm on the same summands. Moreover, if those summands are indexed by $i_1,\dots, i_k$, then $|\gamma_{i_1}(A)|=\dots=|\gamma_{i_k}(A)|$.} 

To see this, we assume for simplicity $(i_1,\dots,i_k)=(1,\dots,k)$, i.e.,  $\|A_{1}\|=\dots=\|A_{k}\|=\|A\|$ and $\|A_i\|<\|A\|$ for $i\geq k+1$. Then, $A\not\perp X:=(A_1\oplus\dots\oplus A_k)\oplus 0$, so also
$$\bigoplus\gamma_i(A)A_i=\Phi(A)\not\perp\Phi(X)=\gamma_1(X) A_1\oplus\dots\oplus\gamma_k(X)A_k\oplus 0.$$
It easily follows that $\Phi(A)$ cannot attain its norm on some block with index greater than $k$, that is, 
$$\|\gamma_i(A) A_i\|<\|\Phi(A)\|\quad\hbox{ for }i\ge k+1.$$
Conversely, if, say, $\|\gamma_k(A) A_k\|<\|\Phi(A)\|$, then  $$\Phi(A)\not\perp (\gamma_1A_1\oplus\dots\oplus\gamma_{k-1}A_{k-1}\oplus0)=:{Y}.$$ By surjectivity there exists $X\in\A$ with $\Phi(X)=Y$, and clearly 
$$X=\alpha_1 A_1\oplus\dots\oplus \alpha_{k-1} A_{k-1}\oplus 0$$ 
for some nonzero scalars $\alpha_i$.  However, since $A$ attains its norm also on $k$-th summand,  $A\perp X$, a contradiction. 

{\bf Step 2}. With $A=I$ this shows that $\Phi(I)={\bm \gamma}(I) I$ where ${\bm \gamma}(I)=\bigoplus\gamma_i(I)I_{n_i}$ is a scalar multiple of a unitary. By replacing $\Phi$ with a BJ isomorphism ${\bm \gamma}(I)^\ast\Phi(\cdot)$ we can hence assume that 
$$\Phi(I)=I.$$

{\bf Step 3}. \textit{If $R=0\oplus z_iz_i^\ast\oplus -z_jz_j^\ast\oplus0$ where $\|z_i\|=\|z_j\|=1$, then $\Phi(R)=\gamma(R)R$, for some $\gamma(R)\in\CC$.}

Choose normalized $z_iz_i^\ast\in 0\oplus M_{n_i}(\CC)\oplus 0$ and $z_jz_j^\ast\in 0\oplus M_{n_j}(\CC)\oplus 0$ with $i\neq j$ and for $R=0\oplus (z_iz_i^\ast)\oplus(-z_jz_j^\ast)\oplus 0$. Since $R$ attains its norm on $z=z_i\oplus z_j$ (we omitted zero summands), and 
$$\langle Rz,I z\rangle=\bigl\langle z_i\oplus (-z_j)\,,\,z_i\oplus z_j\bigr\rangle=0$$
so $R \perp I$. Applying $\Phi$  we deduce by Lemma~\ref{lem:gamma1=gamma2}, that $\gamma_i(R)=\gamma_j(R)=:\gamma(R)\in\CC$, as claimed.

{\bf Step 4}. \textit{If $\|A_i\|=\|A_j\|=\|A\|$ and their numerical ranges satisfy $W(A_i)\cap W(A_j)\setminus\{0\}\neq\emptyset$, then $\gamma_i(A)=\gamma_j(A)$.}

By definition, there exist normalized vectors $z_i$ and $z_j$ (corresponding to blocks containing $A_i$ and $A_j$) such that $\lambda:=z_i^\ast A_iz_i=z_j^\ast A_j z_j\in W(A_i)\cap W(A_j)\setminus\{0\}$. Choose $R$ and $z$ as in Step 3. Then from
$$\overline{\langle Rz,Az\rangle}=\overline{\bigl\langle z_i\oplus (-z_j)\,,\,A_iz_i\oplus A_jz_j\bigr\rangle} =z_i^\ast A_i z_i-z_j^\ast A_j z_j=0$$
we have $R\perp A$. Applying $\Phi$, which fixes $R$ modulo scalars by Step 3, we get
$R\dot{=}\Phi(R)\perp\Phi(A)=\bigoplus \gamma_i(A) A_i$. Now, $R$ achieves its norm only on $\Span\{z_i,z_j\}$  and since $z_i,z_j$ are orthonormal vectors (they correspond to different blocks), there exist $c,s\in\CC$, $|c|^2+|s|^2=1$, such that
\begin{align*}
    0&=\langle R(cz_i\oplus sz_j),\Phi(A)(cz_i\oplus sz_j)\rangle=\langle cz_i\oplus (-sz_j), \gamma_i(A)A_icz_i\oplus \gamma_j(A)A_jsz_j\rangle\\
    &=|c|^2 z_i^\ast (\gamma_i(A)A_i)^\ast z_i-|s|^2z_j^\ast (\gamma_j(A)A_j)^\ast z_j\\
    &=|c|^2\overline{\lambda\gamma_i(A)} -|s|^2\overline{\lambda\gamma_j(A)}.
\end{align*}
After simplifying (while keeping in mind that $\lambda \gamma_k(A)\neq0 $) we get
$\frac{\gamma_i(A)}{\gamma_j(A)}=\frac{|s|^2}{|c|^2}>0$. 
Since $\|A_i\|=\|A_j\|=\|A\|$, then, by Step 1, $\gamma_i(A)=\gamma_j(A)$.

{\bf Step 5}. \textit{If $A=0\oplus x_iy_i^\ast \oplus x_jy_j^\ast\oplus 0$, where $\|x_iy_i^\ast\|=\|x_jy_j^\ast\|$, then $\Phi(A)=\gamma(A) A$ for some $\gamma(A)\in\CC$.}

Notice that $W(x_iy_i^\ast)$ is a (possibly degenerate) elliptic disc with foci at $0$ and $y_i^\ast x_i$ (the eigenvalues of $x_iy_i^\ast$) and minor axis equals to $\sqrt{\text{tr}((x_iy_i^\ast)^* x_iy_i^\ast) - |0|^2 - |y_i^\ast x_i|^2}$ (see \cite{Li1996}) and two such elliptic discs always intersect in a nonzero point, except when they are both degenerate, i.e., when $y_i\dot{=}x_i$ and $y_j\dot{=}x_j$. In the former case, we are done by Step 4. In the latter case, after omitting zero summands, $$A\dot{=}x_ix_i^*\oplus \mu x_jx_j^*$$ where $\mu$ is unimodular and $\|x_i\|=\|x_j\|$. Here we choose $R=x_ix_i^*\oplus \mu x_jz_j^*$, where $z_j\not\in\CC x_j$, $\|z_j\|=\|x_j\|=\|x_i\|$ and $$z_j^\ast x_j<0$$  (it exists since $j$-th summand of $\A$ has size at least $2$). Notice $W(\mu x_jz_j^*)$ is a nondegenerated eliptic disc, so by the first part of Step~5, $\Phi(R)\dot{=} R$. Also, $R$    attains its norm on $w=cx_i\oplus sz_j$, and we can find $c,s$ such that $|c|^2+|s|^2=1$ and $\frac{|c|^2}{|s|^2}=-z_j^*x_j>0$. One can show that $\langle Rw,Aw\rangle=0$ and hence $R\perp A$. Then also $$R\dot{=}\Phi(R)\perp\Phi(A)\dot{=}\gamma_i(A)x_ix_i^*+\gamma_j(A)\mu x_jx_j^*$$ and $|\gamma_i(A)|=|\gamma_j(A)|$ by Step 1.

Also 
\begin{equation}
    \begin{split}\nonumber
    0&=\langle R(c'x_i\oplus s'z_j)\,,\,\Phi(A)(c'x_i\oplus s'z_j)\rangle\\
    &=\langle c'x_i\oplus \mu s'x_j\,,\,c'\gamma_i(A)x_i\oplus \mu s'\gamma_j(A)x_jx_j^*z_j\rangle\\
    &=|c'|^2\overline{\gamma_i(A)}(x_i^*x_i)+|s'|^2\overline{\gamma_j(A)}(z_j^*x_j)(x_j^*x_j),
    \end{split}
\end{equation}
so $\overline{\left(\frac{\gamma_i(A)}{\gamma_j(A)}\right)}=-\frac{|s'|^2(z_j^*x_j)}{|c'|^2}>0.$ This shows that $\gamma_i(A)=\gamma_j(A)=:\gamma(A)\in\CC$.

{\bf Step 6}. Consider now the general case  when summands $A_i,A_j$ are both nonzero.
Let 
$$A_i=\sigma_1(A_i)x_iy_i^\ast+\dots $$
$$A_j=\sigma_1(A_j)x_jy_j^\ast+\dots $$
be their SVD.

Notice that 
$$x_i^\ast A_i y_i= 
\sigma_1(A_i)\neq0\quad\hbox{ and }\quad x_j^\ast A_j y_j=\sigma_1(A_j).$$ 
So, $R:=x_iy_i^\ast \oplus (-x_jy_j^\ast)$ achieves its norm on the span of $y_i,y_j$. These two vectors belong  to different summands so  they are orthonormal. In particular,~$R$ achieves its norm on a normalized vector
$$z:=cy_1\oplus sy_2,$$ 
where $c,s>0$,  $|c|^2+|s|^2=1$ are chosen so that $\frac{|s|^2}{|c|^2}=\frac{\sigma_1(A_i)}{\sigma_1(A_j)}$, and maps it into $$Rz=cx_i\oplus (-sx_j),$$
while $Az=cA_iy_i\oplus sA_jy_j$ (we are not writing the zero direct summands). So,
$$\overline{\langle Rz,Az\rangle}=|c|^2x_i^\ast A_iy_i-|s|^2x_j^\ast A_j y_j=|c|^2 \sigma_1(A_i)-|s|^2\sigma_1(A_j)=0$$
and hence $R\perp A$.

Apply $\Phi$ to get, by Step 5,
$$R\dot{=}\Phi(R)\perp\Phi(A) =\bigoplus \gamma_k(A)A_k.$$

Hence, there exist a normalized vector $z'\in M_0(R)=\Span\{y_i,y_j\}$ such that
$\langle Rz',\Phi(A)z'\rangle=0$. We can write it as 
$z'=c' y_i\oplus s'y_j$ for some  $c',s'\in\CC$, $|c'|^2+|s'|^2=1$, and then
\begin{align*}
 0&=\overline{\langle \Phi(R)z',\Phi(A)z'\rangle}=\overline{\langle \gamma(R)Rz',(\bigoplus\gamma_k(A)A_k)z'\rangle}\\
 &=\overline{\gamma(R)}\bigl(|c'|^2 \gamma_i(A) x_i^\ast A y_i-|s'|^2 \gamma_j(A) x_j^\ast A y_j\bigr)\\
 &=\overline{\gamma(R)}\bigl(|c'|^2 \gamma_i(A) \sigma_1(A_i)-|s'|^2 \gamma_j(A) \sigma_1(A_j)\bigr).  
\end{align*}

We see that 
$$\tfrac{\gamma_i(A)}{\gamma_j(A)}=\tfrac{|s'|^2}{|c'|^2}\cdot \tfrac{\sigma_1(A_j)}{\sigma_1(A_i)}>0.$$
In addition, if $\|A_i\|=\|A_j\|=\|A\|$, then, by Step~1, also $|\gamma_i(A)|=|\gamma_j(A)|$, and so $\|A_i\|=\|A_j\|=\|A\|$ implies $\gamma_i(A)=\gamma_j(A)$. Finally, choose any index $i_0$ with $A_{i_0}\neq0$,  (re)define $\gamma_i(A):=\gamma_{i_0}(A)$ whenever $A_i=0$ and then let $\gamma(A):=\frac{\gamma_1(A)}{|\gamma_1(A)|}$ and $P(A)=|\gamma_1(A)|\left(I_{n_1}\oplus\frac{\gamma_2(A)}{\gamma_1(A)} I_{n_2}\oplus\dots\oplus\frac{\gamma_l(A)}{\gamma_1(A)}I_{n_\ell}\right)>0$. 
\end{proof}

\section{Concluding remarks}
Let us show that if $C^*$-algebra is not simple, then not every BJ isomorphism on $\A$ is  an isometry multiplied by a scalar-valued function.
\begin{example}
Consider $\A=M_{n_1}(\CC)\oplus M_{n_2}(\CC)$ with $n_1,n_2\ge 2$ and let $\Phi\colon\A\to\A$ be a map which fixes all elements except those inside the set $I \oplus (0,1)I$, and let
$$\Phi(I\oplus r I) =I\oplus \gamma(r) I$$
for some bijection $\gamma\colon(0,1)\to(0,1)$.
Such $\Phi$ is bijective, and clearly strongly preserves BJ orthogonality except possibly if $A\perp B$ where  one among $A,B$ belong to $I\oplus (0,1)I$. 

\textbf{Case 1.} $A=I\oplus rI\in I\oplus(0,1)I$. 
Then $A$ achieves its norm only on its first summand, so $B=B_1\oplus B_2$ satisfies $A\perp B$ if and only if $I\perp B_1$. Then, $B_1\neq I$, and so $\Phi(B)=B$. It follows that $\Phi(A)=I\oplus \gamma(r)I \perp B=\Phi(B) $.

\textbf{Case 2.} $B=I\oplus r I\in I\oplus (0,1)I$ and decompose $A=A_1\oplus A_2$. Due to $A\perp B$ we clearly have that $A\not\in I\oplus (0,1)I$, so it is fixed by $\Phi$. 
Hence, to prove \begin{equation}\label{eq:counterexa}
A=\Phi(A)\perp\Phi(B)=I\oplus\gamma(r)I
\end{equation}
we only need to consider the case $\|A_1\|=\|A_2\|=\|A\|\neq0$. By Proposition ~\ref{prop:M_0(A)} there exists a  normalized vector $z=z_1\oplus z_2$, where $A$ attains its norm, such that 
\begin{equation}\label{eq:counter2}
 z_1^\ast A_1z_1+rz_2^\ast A_2 z_2=\langle Az,Bz\rangle=0.   
\end{equation}
 If $z_1=0$, or if $z_1\neq0$ but $z_2^\ast A_2z_2=0$, then $A_2\perp B_2=rI$, so also $A_2\perp \gamma(r) I$, which implies~\eqref{eq:counterexa}; likewise if $z_2=0$ (or if $z_1^\ast A_1z_1\neq0$). So can assume 
$$z_1^\ast A_1z_1 \neq 0\hbox{ and } z_2^\ast A_2z_2\neq0.$$
Notice that $A_1$ and $A_2$ then attain their norms on $z_1$ and $z_2$, respectively. As such $A$ attains its norm also on $z_c:=c z_1\oplus sz_2$ for every $c,s\in\CC$ with $|c|^2+|s|^2=1$. We can choose  $c$ and $s$ such that 
$$\tfrac{|s|^2}{|c|^2} \gamma(r)= r.$$ Then, by~\eqref{eq:counter2}, $\Phi(A)z_c= c A_1z_1\oplus s A_2z_2 $ and $\Phi(B)z_c=c z_1\oplus \gamma(r) s z_2$ are perpendicular vectors, so $\Phi(A)\perp \Phi(B)$. The same arguments can be applied on $\Phi^{-1}$ to show that $\Phi$ strongly preserves BJ orthogonality.
\end{example}

If $\A$ has abelian summand, then not every BJ isomorphism takes the form of Theorem \ref{thm:genera}.
\begin{example}
    Consider $\A=\CC\oplus\CC\cong\CC^2$ and let $\Phi:\A\to\A$ be a map which fixes all elements except those on the line $(1,ri),r\in\mathbb{R}$, and let
    $$\Phi((1,ri))=(1,-ri)=(1,-1)(1,ri).$$
    This $\Phi$ is bijective and strongly preserves $BJ$ orthogonality. However, it is not of the form from Theorem~\ref{thm:genera} since it multiplies some of the  arguments with a non-definite element $(1,-1)$.
\end{example}
\bibliographystyle{abbrv}
\bibliography{BJ_orthogonality}

\begin{thebibliography}{10}

\bibitem{AlonsoBenitez}
J.~Alonso and C.~Benítez.
\newblock Orthogonality in normed linear spaces. a survey {I}: Main properties.
\newblock {\em Extracta Mathematicae}, 3(1):1{--}15, 1988.

\bibitem{Amir}
D.~Amir.
\newblock {\em Characterizations of {I}nner {P}roduct {S}paces}.
\newblock Birkhäuser Verlag, Basel, 1986.

\bibitem{BJNorm}
L.~Aramba\v{s}i\'c, A.~Guterman, B.~Kuzma, R.~Raji\'c, and S.~Zhilina.
\newblock What does {B}irkhoff-{J}ames orthogonality know about the norm?
\newblock {\em Publ. Math. Debrecen 102}, 1-2:197{–}218, 2023.

\bibitem{ArambasicRajic2014}
L.~Aramba\v{s}i\'c and R.~Raji\'c.
\newblock A strong version of the {B}irkhoff-{J}ames orthogonality in {H}ilbert ${C}^*$-modules.
\newblock {\em Ann. Funct. Anal.}, 5(1):109{--}120, 2014.

\bibitem{Benitez2007}
C.~Benítez, M.~Fernández, and M.~L. Soriano.
\newblock Orthogonality of matrices.
\newblock {\em Linear Algebra Appl.}, 422(1):155--163, 2007.

\bibitem{Bhat-Semr}
R.~Bhatia and P.~{\v{S}}emrl.
\newblock Orthogonality of matrices and some distance problems.
\newblock {\em Linear Algebra Appl.}, 287:77--85, 1999.

\bibitem{BlancoTurnsek2006}
A.~Blanco and A.~Turn\v{s}ek.
\newblock On maps that preserve orthogonality in normed spaces.
\newblock {\em Proc. R. Soc. Edinb. A}, 136(4):709--716, 2006.

\bibitem{Faure2002}
C.-A. Faure.
\newblock An elementary proof of the fundamental theorem of projective geometry ({D}edicated to {A}lfred {F}rölicher).
\newblock {\em Geometriae Dedicata}, 90(1):145--151, 2002.

\bibitem{GeherIMRN2020}
G.~P. Gehér.
\newblock Symmetries of projective spaces and spheres.
\newblock {\em Int. Math. Res. Not. IMRN}, 2020(7):2205–2240, 2020.

\bibitem{GeherMoriIMRN2022}
G.~P. Gehér and M.~Mori.
\newblock The structure of maps on the space of all quantum pure states that preserve a fixed quantum angle.
\newblock {\em Int. Math. Res. Not. IMRN}, 2022(16):12003--12029, 2022.

\bibitem{goodearl}
K.~R. Goodearl.
\newblock {\em Notes on {R}eal and {C}omplex ${C}^*$-algebras ({S}hiva {M}ath. {S}er.,5)}.
\newblock {S}hiva {P}ublishing {L}td., {N}antwich, 1982.

\bibitem{Hou1989}
J.-C. Hou.
\newblock Rank-preserving linear maps on ${B}({X})$.
\newblock {\em Science in China Series A: Mathematics}, 32(8):929--940, 1989.

\bibitem{IlisevicTurnsekJMAA2022}
D.~Ili\v{s}evi\'c and A.~Turn\v{s}ek.
\newblock Nonlinear {B}irkhoff–{J}ames orthogonality preservers in smooth normed spaces.
\newblock {\em {J}. {M}ath. {A}nal. {A}ppl.}, 511(1):126045, 2022.

\bibitem{Koldobsky}
A.~Koldobsky.
\newblock Operators preserving orthogonality are isometries.
\newblock {\em Proc. R. Soc. Edinb. A}, 123(5):835{--}837, 1993.

\bibitem{Kuzm-Sush-abelian}
B.~Kuzma and S.~Singla.
\newblock Classification of abelian finite-dimensional ${C}^*$-algebras by orthogonality.
\newblock {\em https://arxiv.org/abs/2411.01684}, 2024.

\bibitem{kuzma_singla_2025}
B.~Kuzma and S.~Singla.
\newblock Birkhoff-{J}ames classification of finite-dimensional ${C^*}$-algebras.
\newblock {\em Proc. Amer. Math. Soc.}, 2025.
\newblock Early View PDF (AMS members only).

\bibitem{simple}
B.~Kuzma and S.~Singla.
\newblock Non-linear classification of finite-dimensional simple ${C}^*$-algebras.
\newblock {\em To appear in Filomat}, 39(5), 2025.

\bibitem{Li1996}
C.-K. Li.
\newblock A simple proof of the elliptical range theorem.
\newblock {\em Proc. Amer. Math. Soc.}, 124(7):1985--1986, 1996.

\bibitem{li2012preservers}
C.-K. Li, L.~Plevnik, and P.~{\v{S}}emrl.
\newblock Preservers of matrix pairs with a fixed inner product value.
\newblock {\em Operators and Matrices}, 6(3):433--464, 2012.

\bibitem{LiSchneider2002}
C.-K. Li and H.~Schneider.
\newblock Orthogonality of matrices.
\newblock {\em Linear Algebra Appl.}, 347:115--122, 2002.

\bibitem{peralta2024}
L.~Li, S.~Liu, and A.~M. Peralta.
\newblock Additive mappings preserving preserving orthogonality between complex inner product spaces.
\newblock {\em https://arxiv.org/pdf/2410.08101}, 2024.

\bibitem{MAGAJN}
B.~Magajna.
\newblock On the distance to finite-dimensional subspaces in operator algebras.
\newblock {\em J. Lond. Math. Soc.}, s2-47(3):516--532, 1993.

\bibitem{sakai1971}
S.~Sakai.
\newblock {\em $C^*$-Algebras and $W^*$-Algebras}, volume~60 of {\em Ergebnisse der Mathematik und ihrer Grenzgebiete}.
\newblock Springer-Verlag, Berlin, Heidelberg, 1971.

\bibitem{STAMPF}
J.~G. Stampfli.
\newblock The norm of a derivation.
\newblock {\em Pacific J. Math.}, 33(3):737--747, 1970.

\bibitem{TanakaContinuous}
R.~Tanaka.
\newblock Non-linear modular {B}irkhoff-{J}ames orthogonality preservers between spaces of continuous functions.
\newblock {\em {J}. {M}ath. {A}nal. {A}ppl.}, 495(2):124744, 2021.

\bibitem{TanakaNonlinear}
R.~Tanaka.
\newblock Nonlinear equivalence of {B}anach spaces based on {B}irkhoff-{J}ames orthogonality.
\newblock {\em {J}. {M}ath. {A}nal. {A}ppl.}, 505:125444, 2022.

\bibitem{TanakaNonlinear2}
R.~Tanaka.
\newblock Nonlinear equivalence of {B}anach spaces based on {B}irkhoff-{J}ames orthogonality, {II}.
\newblock {\em J. Math. Anal. Appl.}, 514(1):126307, 2022.

\bibitem{TanakaIndagationes}
R.~Tanaka.
\newblock On {B}irkhoff–{J}ames orthogonality preservers between real non-isometric {B}anach spaces.
\newblock {\em Indag. Math.}, 33(8):1125{--}1136, 2022.

\bibitem{TanakaAbelian}
R.~Tanaka.
\newblock A {B}anach space theoretical characterization of abelian ${C}^*$-algebras.
\newblock {\em Proc. Amer. Math. Soc. Ser. B}, 10:208{--}218, 2023.

\bibitem{turnsek2005}
A.~Turn\v{s}ek.
\newblock On operators preserving {J}ames' orthogonality.
\newblock {\em Linear Algebra Appl.}, 407:189{--}195, 2005.

\bibitem{Turnsek2017}
A.~Turn\v{s}ek.
\newblock A remark on orthogonality and symmetry of operators in ${B(H)}$.
\newblock {\em Linear Algebra Appl.}, 535:141{--}150, 2017.

\bibitem{Westwick1967}
R.~Westwick.
\newblock Transformations on tensor spaces.
\newblock {\em Pacific J. Math.}, 23:613--620, 1967.

\bibitem{Wojcik}
P.~W\'ojcik.
\newblock Mappings preserving ${B}$-orthogonality.
\newblock {\em Indag. Math.}, 30(1):197{--}200, 2018.

\end{thebibliography}

\end{document}